\newcommand{\leqnomode}{\tagsleft@true}
\newcommand{\reqnomode}{\tagsleft@false}
\newcommand{\curly}{\mathcal}
\newcommand{\G}{\curly{G}}
\newcommand{\Rep}{\boldsymbol{\mathrm{Rep}}}
\DeclareMathOperator{\Hom}{Hom}
\DeclareMathOperator{\codim}{codim}
\newcommand{\GL}{\mathrm{GL}}
\newcommand{\GGLL}{\boldsymbol{\GL}}
\newcommand{\kp}{\vdash}
\newcommand{\cP}{\curly{P}}
\newcommand{\cQ}{\curly{Q}^\bullet}
\newcommand{\coha}{\mathscr{H}}
\newcommand{\coho}{\mathrm{H}}
\newcommand{\F}{\curly{F}}
\newcommand{\E}{\curly{E}}
\newcommand{\cA}{\mathscr{A}}
\newcommand{\Fl}{\mathrm{Fl}}
\newcommand{\FFll}{\boldsymbol{\Fl}}
\newcommand{\Sym}{\mathfrak{S}}
\newcommand{\kpm}{{\boldsymbol{\mathrm{m}}}}
\newcommand{\QQ}{\mathbb{Q}}
\newcommand{\ZZ}{\mathbb{Z}}
\newcommand{\CC}{\mathbb{C}}
\newcommand{\EE}{\mathbb{E}}
\newcommand{\NN}{\mathbb{N}}
\newcommand{\A}{\mathbb{A}}
\newcommand{\union}{\cup}
\newcommand{\Union}{\bigcup}
\newcommand{\intersect}{\cap}
\newcommand{\dirsum}{\oplus}
\newcommand{\Dirsum}{\bigoplus}
\newcommand{\tensor}{\otimes}
\newcommand{\Tensor}{\bigotimes}
\newcommand{\compose}{\circ}
\newcommand{\includes}{\hookrightarrow}
\newcommand{\iso}{\cong}
\newcommand{\homeo}{\approx}
\newcommand{\hmtpc}{\simeq}
\newcommand{\introthm}[2]{%
	\vspace{0.5\baselineskip}%
	\noindent\textbf{Theorem~#1}.~\textit{#2}%
	\vspace{0.5\baselineskip}}
\newcommand{\introcor}[2]{%
	\vspace{0.5\baselineskip}%
	\noindent\textbf{Corollary~#1}.~\textit{#2}%
	\vspace{0.5\baselineskip}}
\theoremstyle{plain}
\newtheorem{prop}{Proposition}[section]
\newtheorem{lem}[prop]{Lemma}
\newtheorem{cor}[prop]{Corollary}
\newtheorem{thm}[prop]{Theorem}
\newtheorem*{thm*}{Theorem}
\theoremstyle{definition}
\newtheorem{defn}[prop]{Definition}
\newtheorem{notn}[prop]{Notation}
\theoremstyle{remark}
\newtheorem{remark}[prop]{Remark}
\newtheorem{example}[prop]{Example}
\title[Structure for CoHA of acyclic quivers]{A family of structure isomorphisms for the Cohomological Hall Algebra of an acyclic quiver}
\author[J.~Allman]{Justin Allman}
\address{Department of Mathematics \\ US Naval Academy \\ Annapolis, MD}
\email{allman@usna.edu}
\subjclass[2010]{Primary 16G20; Secondary 55R40, 57R20}
\keywords{Cohomological Hall Algebra, Dynkin quiver, acyclic quiver, quiver polynomial}
\begin{document}

\begin{abstract}
For any acyclic quiver, we establish a family of structure isomorphisms for its cohomological Hall algebra (CoHA).  The family is parameterized by partitions of the quiver into Dynkin subquivers. For each such partition, we write the domain of our isomorphism as a tensor product of subalgebras in two ways. In the first, each tensor factor is isomorphic to the CoHA of the quiver with a single vertex and no arrows. In the second, the tensor factors are each isomorphic to the CoHAs of the corresponding Dynkin subquivers. When the quiver is already an orientation of a simply-laced Dynkin diagram, our results interpolate between isomorphisms proved by Rim\'anyi. Such CoHA decompositions appear in prior work of Davison--Meinhardt and Franzen--Reineke, but our proof gives an explicit topological realization of these results. As a consequence of our method, we deduce that certain structure constants in the CoHA naturally arise as CoHA products of classes of Dynkin quiver polynomials. 
\end{abstract}

\maketitle


\section{Introduction}
\label{s:intro}

To every quiver $Q$, one can associate its cohomological Hall algebra (CoHA) $\coha(Q) = \coha$, which Kontsevich and Soibelman defined in their seminal paper \cite{mkys2011}. The CoHA is inspired by physics; it is designed to model the algebra of BPS states in string theory. Mathematically speaking, let $\gamma$ denote a dimension vector for $Q$. As a vector space, the CoHA is a direct sum over all dimension vectors $\coha = \Dirsum_\gamma \coha_\gamma$. The summands are the equivariant cohomology algebras $\coha_\gamma = \coho^\bullet_{\GGLL_\gamma}(\Rep_\gamma)$ where $\Rep_\gamma$ is the space of $Q$-representations of dimension $\gamma$ and $\GGLL_\gamma$ is the algebraic group which acts naturally on $\Rep_\gamma$ by simultaneously changing basis at each vertex. The novel multiplication (generally noncommutative) endowed by Kontsevich--Soibelman on the CoHA, and which we denote by $*$ throughout the paper, respects the dimension vector grading and encodes important combinatorial, algebraic, and geometric aspects of the quiver and its representations, see \cite{mkys2011,ae2012,rr2013,bd2017,hfmr2018}. 

The structure of $\coha$ (and its modules) has been the subject of much study \cite{mkys2011,ae2012,rr2013,zc2014,bd2017,hf2016,hf2018,hfmr2018,hfmr2019}. In this paper, we restrict to the case of quivers $Q$ which are acyclic, i.e., have no oriented cycles. Given a quiver, one problem is to determine subalgebras over which $\coha$ is generated. Our main results provide one possible point of view on the CoHA isomorphism problem; it is our hope that readers familiar with characteristic classes of singular varieties (specifically the story of quiver polynomials) will find our description of the CoHA approachable. For example, we state one of our main results below. 

\introcor{\ref{cor:dynkin.decomp}}{
For certain choices of Dynkin subquivers $Q^1,\ldots,Q^\ell$ of $Q$ (we call such a choice an \emph{admissible Dynkin subquiver partition}) the (left to right) $*$-multiplication induces an isomorphism \[\pushQED{\qed}\coha(Q^1) \tensor \cdots \tensor \coha(Q^\ell) \stackrel{\iso}{\longrightarrow} \coha(Q). \qedhere\popQED \]}

As in the statement above, the main results in this paper live in a family parameterized by a choice of an \emph{admissible Dynkin subquiver partition} of $Q$ \cite{ja2018}. The seminal work of Gabriel \cite{pg1972} established the \emph{Dynkin quivers} (orientations of simply-laced Dynkin diagrams of type $A$, $D$, or $E$) as the ``base case'' for considering the representations of acyclic quivers. An important chapter in the study of Dynkin quiver representations is the theory of \emph{quiver polynomials}. These are characteristic fundamental classes associated to the closure (in $\Rep_\gamma$) of a $\GGLL_\gamma$-orbit (equivalently, the locus of quiver representations of a fixed isomorphism type). Their remarkable positivity and stability properties have been explored by many authors, and have remained at the frontiers of algebraic combinatorics for two decades; see \cite{abwf1999,ab2002.qv,abakhtay2004,ab2005.alt,abakhtay2005,abfsay2005,em2005,akemms2006,ab2008,rr2014,ja2014.ir,rkakjr2019}. In fact, the quiver polynomials are distinguished elements of the CoHA since, given a quiver orbit $\Omega \subset \Rep_\gamma$, we have
\[
[\overline{\Omega}] \in \coho^\bullet_{\GGLL_\gamma}(\Rep_\gamma) = \coha_\gamma.
\]
For Dynkin quivers, the $\GGLL_\gamma$-orbits in $\Rep_\gamma$ correspond to so-called \emph{Kostant partitions} $\kpm \kp \gamma$ (see Section \ref{ss:dynkins}). Given an admissible Dynkin subquiver partition $\{Q^1,\ldots,Q^\ell\}$ for $Q$, let $\Omega_{\kpm^j}$ denote the Dynkin quiver orbit in $\Rep_{\gamma^j}(Q^j)$ corresponding to the Kostant partition $\kpm^j \kp \gamma^j$, where $\gamma^j$ is a dimension vector for $Q^j$, and so $\gamma = \sum_{j}\gamma^j$ is a dimension vector for $Q$. The following formula in the CoHA shows that the Dynkin quiver polynomials play a central role in the algebra structure of any acyclic CoHA. 

\introcor{\ref{cor:dynkin.orbit.mult}}{
In terms of the isomorphism of Corollary \ref{cor:dynkin.decomp}, we have
\[
	\left( \left[ \overline{\Omega_{\kpm^1}(Q^1)} \right] \in \coha_{\gamma^1}(Q^1)\right)
		* \cdots * 
		\left( \left[ \overline{\Omega_{\kpm^\ell}(Q^\ell)} \right] \in \coha_{\gamma^\ell}(Q^\ell) \right)
			= \left[ \overline{\eta}_\kpm \right] \in \coha_\gamma(Q)
	\]
where $\eta_\kpm \subset \Rep_\gamma(Q)$ is a distinguished subvariety we call a \emph{quiver stratum} (see Section \ref{s:imp.vars}).\qed
}

We now describe some historical advances for CoHA isomorphisms to put our results in context. The structure of $\coha$ for the quiver with one vertex and no loops, i.e., the Dynkin diagram of type $A_1$, was determined already in the original work of Kontsevich and Soibelman \cite[Section~2.5]{mkys2011}. $\coha(A_1)$ is an exterior algebra on countably many generators; we recount this example in Section \ref{ss:coha.A1}. Efimov proved an analogous result, conjectured by Kontsevich--Soibelman, for any symmetric quiver \cite{ae2012}. Efimov found primitive  generators for the CoHA of a symmetric quiver, and the description in this case established the positive integrality of the associated Donaldson--Thomas invariants. We highlight three other relevant points in this story (at times considering non-acyclic quivers).

I. Kontsevich and Soibelman also considered the quiver $1 \leftarrow 2$ (an orientation of the $A_2$ Dynkin diagram). They gave a decomposition of $\coha(A_2)$ in two ways: first as a tensor product of subalgebras parameterized by simple roots of the corresponding root system, and second as a tensor product of subalgebras parameterized by the positive roots \cite[Section~2.8]{mkys2011}. That is, they described subalgebras $\cA_{1,0}$, $\cA_{0,1}$, and $\cA_{1,1}$ such that each is isomorphic to $\coha(A_1)$ and furthermore
\begin{equation}
	\label{eqn:intro.A2} 
	\cA_{1,0} \tensor \cA_{0,1} 
	\stackrel{\iso}{\longrightarrow} \coha(A_2) 
	\stackrel{\iso}{\longleftarrow} \cA_{0,1} \tensor \cA_{1,1} \tensor \cA_{1,0}
\end{equation}
when the $*$-multiplication is carried out from left to right. Here, the passage to Poincar\'e series reproduces the quantum pentagon identity, going back to the work of Faddeev--Kashaev \cite{lfrk1994}.

II. Rim\'anyi \cite{rr2013} proved that the two decompositions from \eqref{eqn:intro.A2} generalized to the case when $Q$ is an orientation of any simply-laced Dynkin diagram (i.e., of type $A$, $D$, or $E$). In particular, there are subalgebras $\cA_\beta$ for each positive root $\beta$ (and hence for every simple root) such that
	\begin{equation}
		\label{eqn:intro.RR}
	\left(\Tensor^{\to}_{\text{$\alpha$ a simple root}} \cA_\alpha \right)
	\stackrel{\iso}{\longrightarrow} \coha
	\stackrel{\iso}{\longleftarrow} 
	\left(\Tensor^{\to}_{\text{$\beta$ a positive root}} \cA_\beta \right)
	\end{equation}
where the arrows over the tensor products indicate the $*$-multiplications must be carried out in a certain prescribed order. Each tensor factor subalgebra in \eqref{eqn:intro.RR} is isomorphic to $\coha(A_1)$.

III. More recently, works of Davison--Meinhardt \cite{bdsm2016} and Franzen--Reineke \cite{hfmr2018} give more refined tensor product decompositions which hold for any quiver. In the case of Davison--Meinhardt, the result generalizes completely to any so-called \emph{quiver with potential} (for acyclic quivers, any potential must necessarily be zero). Their CoHA isomorphisms are parameterized by a fixed stability condition $\theta$ and the tensor product is over the associated semi-stable CoHAs (with potential) taken in descending order of slope $\mu$ (see \cite[Theorem~6.1]{hfmr2018} and \cite[Theorem~D]{bdsm2016})
	\begin{equation}
		\label{eqn:intro.FR}
		\Tensor^{\to}_{\mu} \coha^{\theta\text{-sst},\mu} \,
		\stackrel{\iso}{\longrightarrow}
		\coha.
	\end{equation}
In the case of orientations of type $A$, $D$, or $E$ Dynkin diagrams, the isomorphisms \eqref{eqn:intro.FR} interpolate between Rim\'anyi's isomorphisms \eqref{eqn:intro.RR}, but in general not all semistable CoHAs $\coha^{\theta\text{-sst},\mu}$ are isomorphic to $\coha(A_1)$. The algebras $\coha^{\theta\text{-sst},\mu}$ do admit understood presentations even if explicit descriptions are difficult in particular cases \cite[Section~8]{hfmr2018}. For example, Franzen and Reineke considered the application of \eqref{eqn:intro.FR} to the so-called Kroneker quiver $1 \leftleftarrows 2$ \cite{hfmr2019}.

\vspace{\baselineskip}
Our main theorem, from which Corollaries \ref{cor:dynkin.decomp} and \ref{cor:dynkin.orbit.mult} follow, is the following decomposition result which, in the case when $Q$ is Dynkin, interpolates between the lefthand and righthand decompositions of \eqref{eqn:intro.RR}.

\introthm{\ref{thm:main}}{
Let $\{Q^1,\ldots,Q^\ell\}$ be an admissible Dynkin subquiver partition of the acyclic quiver $Q$. If $r_j$ denotes the number of positive roots for $Q^j$ and $r=\sum_j r_j$, there exist subalgebras $\cA_{1},\ldots,\cA_{r} \subset \coha(Q)$ such that each $\cA_{u}$ is isomorphic to $\coha(A_1)$, and such that the left to right $*$-multiplication induces an isomorphism
\[\pushQED{\qed} \cA_{1} \tensor \cdots \tensor \cA_{r} {\longrightarrow\,} \coha(Q). \qedhere \popQED\] 
}

In the context of the results of Franzen--Reineke and Davison--Meinhardt, we comment that the methods of our paper adopt a fundamentally \emph{topological viewpoint}, whereas the stability condition approach is a fundamentally \emph{representation theoretical viewpoint}. Each of our admissible Dynkin subquiver partitions corresponds to a stability condition from the representation theory viewpoint. In this sense, our Theorem \ref{thm:main} is implicit in the work of Franzen--Reineke and Davison--Meinhardt. 

Thus, the major accomplishment of our paper is an \emph{explicit topological realization, in the language of characteristic classes, of the isomorphism of Equation \eqref{eqn:intro.FR}}. It would be interesting to consider if there exists stability conditions for which each factor $\coha^{\theta-\mathrm{sst},\mu}$ is isomorphic to $\coha(A_1)$, but which can not be realized topologically by an admissible Dynkin subquiver partition.

The structure of the paper is as follows. In Section \ref{s:quivers} we lay out notations, definitions, and relevant preliminary results related to quivers and the equivariant cohomology algebras $\coha_\gamma$. In Section \ref{s:coha} we recall the definition of the CoHA and describe the $*$-multiplication explicitly. In Section \ref{s:imp.vars} we describe several important varieties which are parameterized by combinatorial data associated to the quiver, and prove several results about their structure and relevance to the present setting. In Section \ref{s:q.dilog.poincare} we recall results regarding quantum dilogarithm identities from \cite{ja2018} and describe their interpretation as Poincar\'e series for the CoHA. Our main results appear in Section \ref{s:structure} and we complete the proof of Theorem \ref{thm:main} in Section \ref{s:main.thm.pf}.

\subsection*{Acknowledgements} We thank Rich\'ard Rim\'anyi, who introduced us first to the topic of CoHAs and quantum dilogarithm identities in 2014 at Chapel Hill, NC. Further, we acknowledge support in Summer 2019 from an Office of Naval Research Junior NARC grant.

\section{Quivers}
\label{s:quivers}

\subsection{Preliminaries}
	\label{ss:quivers.prelims}
A quiver $Q = (Q_0,Q_1,h,t)$ is a directed graph where $Q_0$ is a finite set of vertices, $Q_1$ is a finite set of edges whose elements are called \emph{arrows}, and $h:Q_1 \to Q_0$ and $t:Q_1\to Q_0$ are maps respectively called \emph{head} and \emph{tail}. The maps $h$ and $t$ encode the orientation of the arrows. A \emph{dimension} vector $\gamma = (\gamma(i))_{i\in Q_0}$ is a list of non-negative integers, one for each quiver vertex. In the sequel, we let $D$ denote the monoid of all dimension vectors.\footnote{When understood from context, we will omit reference to $Q$ in our notations; e.g., although the monoid of dimension vectors depends on $Q_0$, we opt to write $D$ instead of, say, $D_Q$.} In the rest of the paper, we use the notation $[p]:=\{1,2,\ldots,p\}$ for any $p\in \NN$, and $e_i \in D$ is the \emph{simple dimension vector} with $1$ at vertex $i$ and zeros elsewhere. Thus, assuming our quiver has $n$ vertices, we identify $Q_0 = [n]$ and write $D \iso \Dirsum_{i=1}^n \ZZ_{\geq 0}\cdot e_i$. For $\gamma \in D$, we form the \emph{space of quiver representations}
\begin{equation}
	\label{eqn:defn.Rep.gamma}
	\Rep_\gamma(Q) = \Rep_\gamma := \Dirsum_{a\in Q_1} \Hom\left( \CC^{\gamma(ta)},\CC^{\gamma(ha)} \right)
\end{equation}
with action of the \emph{base change group} $\GGLL_\gamma := \prod_{i\in Q_0} \GL(\CC^{\gamma(i)})$ given by
\begin{equation}
	\label{eqn:defn.GGLL.action}
	(g_i)_{i\in Q_0} \cdot (\phi_a)_{a\in Q_1} = (g_{ha} \phi_a g_{ta}^{-1})_{a\in Q_1}.
\end{equation}
We let $\chi:D \times D \to \ZZ$ denote the bilinear Euler form for $Q$, given by the formula
\begin{equation}
	\label{eqn:defn.Euler.form}
	\chi(\gamma_1,\gamma_2) = \sum_{i\in Q_0} \gamma_1(i)\gamma_2(i) - \sum_{a\in Q_1} \gamma_1(ta)\gamma_2(ha) \nonumber
\end{equation}
for any $\gamma_1,\gamma_2 \in D$. We also consider the opposite antisymmetrization of $\chi$,
\begin{equation}
	\label{eqn:defn.lambda}
	\langle\gamma_1,\gamma_2\rangle := \chi(\gamma_2,\gamma_1) - \chi(\gamma_1,\gamma_2).\nonumber
\end{equation}
Observe that for any $i,i'\in Q_0$
\begin{equation}
	\label{eqn:lambda.simples}
	\langle e_i,e_{i'}\rangle = \#\{a\in Q_1: ta=i,\,ha=i'\} - \#\{a\in Q_1: ta=i',\,ha=i\},
\end{equation}
that is, $\langle e_i,e_{i'} \rangle$ equals the number of arrows $i\to i'$ minus the number of arrows $i'\to i$.

A \emph{path} in $Q$ is a concatenation of arrows $a_\ell\,\cdots\,a_2\,a_1$ such that $ha_j = ta_{j+1}$ for all $j\in[\ell-1]$. Such a path is an \emph{oriented cycle} if $ha_\ell = ta_1$. A quiver is called \emph{acyclic} if it contains no oriented cycles. In particular, acyclic quivers contain no loop arrows. Observe that the underlying non-directed graph associated to $Q$ may contain cycles, even if $Q$ is acyclic.

In the rest of the paper, we assume $Q$ is an acyclic quiver, and we further assume that the set of vertices $Q_0 = [n]$ is ordered so that for every arrow $a\in Q_1$, we have $ha < ta$. Such an ordering is always possible in an acyclic quiver, but is not unique in general; see e.g., \cite[Exercise~1.5.2]{hdjw2017}. Moreover, when $Q$ is acyclic note that \eqref{eqn:lambda.simples} exactly counts (with a sign, but no cancellation) the number of arrows between the vertices $i$ and $i'$. Therefore, our assumption on the ordering of $Q_0$ implies that $\langle e_i,e_{i'} \rangle \leq 0$  whenever $i<i'$.

\subsection{Dynkin quivers}
	\label{ss:dynkins}
A quiver $Q$ is a \emph{Dynkin quiver} if it is an orientation of a simply-laced Dynkin diagram (i.e., of type $A$, $D$, or $E$). We identify the \emph{simple roots} of the associated root system with the set of simple dimension vectors $e_i$ for each $i\in Q_0$. Hence each element, $\beta$, of the set of \emph{positive roots} $\Phi_+(Q) = \Phi_+$ has the form
\[
\beta = \sum_{i\in Q_0} d^i_{\beta} e_i
\]
for some non-negative integers $d^i_\beta$. In this way, we realize positive roots as dimension vectors. As is our convention, we will suppress $Q$ in the notation when it is clear from context.

We have already described an ordering on the vertices $Q_0$, and hence on the simple roots. Namely we require that $e_{ha}$ precedes $e_{ta}$ for every arrow $a\in Q_1$. We call this the \emph{``head before tail'' order} on simple roots. We describe an order on the positive roots $\Phi_+$ by forcing the condition that
\begin{equation}
	\label{eqn:pos.rts.order}
	\beta' \text{~precedes~} \beta'' \implies \langle \beta',\beta'' \rangle \geq 0.
\end{equation}
The above condition on positive roots defines a partial order, from which we adopt any linear extension as a total order. The choice is not unique, but always exists for Dynkin quivers (see \cite{mr2010,rr2013,jarr2018,ja2018} for more details and equivalent formulations). Because this order was first utilized in Reineke's work \cite{mr2010} to prove Donaldson--Thomas type identities (a connection we will exploit later), we call the resulting total order on $\Phi_+$ a \emph{Reineke order}; this terminology is also consistent with \cite{rr2018}.

\begin{example}
	\label{ex:Reineke.order.A3}
Consider the equioriented $A_3$ quiver $1 \leftarrow 2 \leftarrow 3$. Note that we have already chosen the names of the vertices so that the simple roots are in ``head before tail'' order $e_1\prec e_2 \prec e_3$. A Reineke order for the associated positive roots is given by
\begin{equation}
	\label{eqn:reineke.order.A3}
	\begin{aligned}
	\beta_1 & = e_3, & \beta_2 & = e_2+e_3, & \beta_3 & = e_2, \\
	\beta_4 & = e_1 + e_2 + e_3, & \beta_5 &= e_1 + e_2, & \beta_6 &= e_1,
	\end{aligned}
\end{equation}
where one can check that $u<v$ implies $\langle \beta_u,\beta_v \rangle \geq 0$. Observe we can obtain another Reineke order by interchanging $e_2$ and $e_1+e_2+e_3$ above, since $\langle e_2,e_1+e_2+e_3 \rangle = 0$.
\end{example}

Given a dimension vector $\gamma \in D$, a choice of non-negative integers $\kpm = (m_\beta)_{\beta\in\Phi_+}$ is called a \emph{Kostant partition} of $\gamma$ if \[\gamma = \sum_{\beta\in\Phi_+} m_\beta\,\beta. \] When this happens, we write $\kpm \kp \gamma$. Gabriel's theorem \cite{pg1972} that the orbits are in one-to-one correspondence with Kostant partitions. In the sequel, we will denote the orbit corresponding to $\kpm \kp \gamma$ by $\Omega_{\kpm}(Q) = \Omega_{\kpm} \subset \Rep_\gamma$. When $\Phi_+$ is in Reineke order $\beta_1 \prec \cdots \prec \beta_r$, we will abuse notation and write $m_u= m_{\beta_u}$.

\begin{example}
	\label{ex:Orbit.A3}
For $Q$ the equioriented $A_3$ quiver of Example \ref{ex:Reineke.order.A3}, take the Reineke order in \eqref{eqn:reineke.order.A3}. Consider the Kostant partition with $m_1 = m_5 = 0$, $m_2=2$, and all other $m_u = 1$. For type $A$ quivers, one can draw a \emph{lacing diagram} (originally due to \cite{saadf1980}), corresponding to the orbit $\Omega_\kpm$. In this case, we have that $\Omega_\kpm$ corresponds to the lacing diagram
\begin{center}
	\begin{tikzpicture}
	\node (11) at (0,1.5) {$\bullet$};
	\node (12) at (0,1) {$\bullet$};
	\node (21) at (3,1.5) {$\bullet$};
	\node (22) at (3,1) {$\bullet$};
	\node (23) at (3,.5) {$\bullet$};
	\node (24) at (3,0) {$\bullet$};
	\node (31) at (6,1.5) {$\bullet$};
	\node (32) at (6,1) {$\bullet$};
	\node (33) at (6,.5) {$\bullet$};
	
	\draw[->] (31) -- (21);
	\draw[->] (32) -- (22);
	\draw[->] (33) -- (23);
	\draw[->] (21) -- (11);
	
	\node[draw, shape=rectangle, minimum width=6.4cm, minimum height=0.4cm, anchor=center] at (3,1.5) {};
	\node at (6.5,1.5) {$\beta_4$};
	
	\node[draw, shape=rectangle, minimum width=3.4cm, minimum height=0.9cm, anchor=center] at (4.5,0.75) {};
	\node at (6.5,.75) {$\beta_2$};
	
	\draw (0,1) circle [radius=0.2] node [left] {$\beta_6$\;};
	\draw (3,0) circle [radius=0.2] node [left] {$\beta_3$\;};

	\end{tikzpicture}
\end{center}
which says that $\Omega_\kpm$ consists of those quiver representations in $\Rep_{(2,4,3)}$ for which the lefthand mapping $\CC^4 \to \CC^2$ from vertex $2$ to vertex $1$ has rank $1$, the righthand mapping $\CC^3 \to \CC^4$ from vertex $3$ to vertex $2$ has full rank, and the intersection of the kernel of the lefthand map and the image of the righthand map has dimension $2$.
\end{example}

In the case of Dynkin quivers, the closure of the orbit $\overline{\Omega}_\kpm \subset \Rep_\gamma$ is called a \emph{quiver locus} and its equivariant fundamental class $[\overline{\Omega}_\kpm] \in \coho^\bullet_{\GGLL_\gamma}(\Rep_\gamma)$ is called a \emph{quiver polynomial}. These characteristic classes (and their K-theoretic analogues) have a rich history in their own right; they exhibit remarkable combinatorial and geometric properties and have connections to many other areas of algebraic combinatorics and algebraic geometry. The author suggests the seminal work of Buch--Fulton \cite{abwf1999}, as well as the more recent papers \cite{ab2008} and \cite{rkakjr2019} (and references therein), as a starting point on quiver polynomials. 

We observe that the quiver polynomial $[\overline{\Omega}_\kpm]$ is an element of $\coha_\gamma$, and hence an element of the CoHA. Rim\'anyi proved that the Dynkin quiver polynomials are akin to structure constants in Dynkin CoHAs \cite[Theorem~10.1]{rr2013}. In the sequel, we will see that these geometrically distinguished elements still play an interesting role in the algebraic structure of any acyclic CoHA.

\subsection{Equivariant cohomology algebras and their elements}
	\label{ss:quivers.eq.coho}
Fix the dimension vector $\gamma \in D$. In this paper, a central object we study is the equivariant cohomology algebra associated to the vector space \eqref{eqn:defn.Rep.gamma} and group action \eqref{eqn:defn.GGLL.action}; explicitly we set
\begin{equation}
	\label{eqn:defn.H.gamma}
	\coha_\gamma(Q) = \coha_\gamma := \coho^\bullet_{\GGLL_\gamma}(\Rep_\gamma)
\end{equation}
where here and throughout the paper we assume cohomology algebras have rational coefficients. As before, we omit $Q$ from the notation when it is clear from context. Now, for each quiver vertex $i\in Q_0$, let $\{\omega_{i,j}:j\in[\gamma(i)]\}$ denote a set of indeterminates (each with cohomological degree two), which we allow to represent the Chern roots of $\GL(\CC^{\gamma(i)})$. Since $\Rep_\gamma$ is a $\GGLL_\gamma$-equivariantly contractible vector space, we further identify
\begin{equation}
	\label{eqn:H.gamma.polynomials}
	\coha_\gamma \iso \coho^\bullet(B\GGLL_\gamma) 
		\iso \Tensor_{i\in Q_0} \QQ[\omega_{i,1},\ldots,\omega_{i,\gamma(i)}]^{\Sym_{\gamma(i)}}
\end{equation}
where $B$ denotes the {Borel construction} for equivariant cohomology and $\Sym_p$ is the symmetric group on $p$ letters. The action of each $\Sym_{\gamma(i)}$ is by permuting the variables $\{\omega_{i,j}:j\in[\gamma(i)]\}$. Hence, in the sequel we realize elements of $\coha_\gamma$ as polynomials $f(\omega_{i,j})$ which are separately symmetric in the variables $\omega_{i,j}$ for each $i\in Q_0$.

\subsection{Subquiver partitions}
	\label{ss:quivers.subquiver.partitions}
A quiver $Q$ is \emph{nonempty} if $Q_0 \neq \emptyset$, and $Q$ is \emph{connected} if its underlying non-oriented graph is connected. A \emph{subquiver} $Q'$ of $Q$ is a quiver with $Q'_0 \subseteq Q_0$ and $Q'_1 \subseteq Q_1$.

\begin{defn}
	\label{defn:dsqp}
Let $\cQ = \{Q^1,\ldots,Q^\ell\}$ be a set of subquivers of $Q$.
	\begin{enumerate}[label=(\alph*),leftmargin=*]
	\item We say that $\cQ$ is a \emph{subquiver partition} if each $Q^j$ is a non-empty connected subquiver, and $Q_0$ is the disjoint union of the $Q^j_0$ vertex sets.
	\item We say that $\cQ$ is a \emph{Dynkin subquiver partition} if it is a subquiver partition and each $Q^j$ is a Dynkin quiver.
	\item We say that $\cQ$ is an \emph{admissible subquiver partition} if it is a subquiver partition and the quiver obtained from $Q$ by contracting each subquiver $Q^j$ to a single vertex is also acyclic. 
	\item In the contracted quiver described by \ref{defn:dsqp}(c), note that the vertices are identified with the subquivers $Q^j \in \cQ$. We say that an admissible subquiver partition $\cQ$ is \emph{ordered} if $a$ is an arrow in the contracted quiver with $ha = Q^i$ and $ta = Q^j$, then $i<j$. 
	\end{enumerate}
\end{defn}

Definition \ref{defn:dsqp}(d) can be rephrased as saying that the subquivers are ordered so that they are in ``head before tail'' order in contracted quiver of \ref{defn:dsqp}(c). Hence every admissible subquiver partition can be ordered. Figure \ref{fig:adm.dsqp} provides an example. We further introduce the notation $\cQ_1 = \Union_{j\in[\ell]}Q^j_1$ to denote the set of arrows in $Q$ which appear in one of the $Q^j$ subquivers. Thus, the arrows of the contracted quiver described in Definition \ref{defn:dsqp}(c) are identified with $Q_1\setminus \cQ_1$.

\begin{figure}
\centering
\begin{tikzpicture}
	\node (00) at (0,0) {$\bullet$};
	\node (10) at (1,0) {$\bullet$};
	\node (20) at (2,0) {$\bullet$};
	\node (21) at (2,1) {$\bullet$};
	\node (30) at (3,0) {$\bullet$};
	\node (31) at (3,1) {$\bullet$};
	\node (40) at (4,0) {$\bullet$};
	\node (41) at (4,1) {$\bullet$};
	
	\draw [->] (10) -- (00);
	\draw [->] (10) -- (20);
	\draw [->] (10) -- (21);
	\draw [->] (21) -- (31);
	\draw [->] (20) -- (30);
	\draw [->] (31) -- (30);
	\draw [->] (40) -- (30);
	\draw [->] (40) -- (41);
	
	\draw[rounded corners, red] (-.25,0) -- (-.25,.25) -- (1,.25) -- (2,1.25) -- (2.25,.95) -- (1.5,.25) -- (2.25,.25) -- (2.25,-.25) -- (-.25,-.25) -- (-.25,0) ;
	\draw[rounded corners, red] (2.75, .5) -- (2.75, 1.25) -- (3.25, 1.25) -- (3.25,.25) -- (4.25,.25) -- (4.25, -.25) -- (2.75,-.25) -- (2.75, .5);
	\draw[red] (4,1) circle [radius=.25];
\end{tikzpicture}
	\caption{An admissible Dynkin subquiver partition which becomes ordered by the assignments: $Q^1 = A_1$, $Q^2$ an orientation of $A_3$, and $Q^3$ an orientation of $D_4$. Observe that the underlying non-oriented graph associated to $Q$ has a cycle, but that the quiver $Q$ is acyclic.}
	\label{fig:adm.dsqp}
\end{figure}
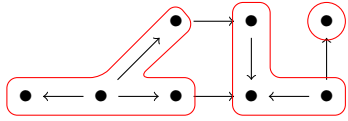

Given a dimension vector $\gamma$ for $Q$ we define the \emph{support} of $\gamma$ to be the set $\{i\in Q_0: \gamma(i) \neq 0\}$. If $Q'$ is a subquiver of $Q$ then we say \emph{$\gamma$ has its support in $Q'$} if the support of $\gamma$ is a subset of $Q'_0$. 

\begin{defn}
	\label{defn:consistent.gamma.lists}
Let $\cQ = \{Q^1,\ldots,Q^\ell\}$ be an admissible, ordered subquiver partition of $Q$. We say that the list of dimension vectors $\gamma_1,\ldots,\gamma_r$ is \emph{consistent with $\cQ$} if 
	\begin{itemize}[leftmargin=*]
	\item for each $u\in[r]$, there exists $j(u)\in[\ell]$ such that $\gamma_u$ has its support in $Q^{j(u)}$, and
	\item if $1\leq u<v \leq r$, then $1\leq j(u) \leq j(v) \leq \ell$.
	\end{itemize}
\end{defn}

\begin{example}
For the equioriented $A_3$ quiver $1 \leftarrow 2 \leftarrow 3$, we have the admissible, ordered, Dynkin subquiver partition $\cQ = \{Q^1,Q^2\}$ where $Q^1$ is the $A_1$-subquiver consisting of the singleton vertex $1$ and $Q^2$ is the $A_2$-subquiver $2\leftarrow 3$. Consider the following lists of dimension vectors for $Q$:
	\begin{enumerate}[label=(\roman*)]
	\item $\gamma_1 = (1,0,0)$, $\gamma_2 = (0,0,1)$, $\gamma_3 = (0,1,1)$, $\gamma_4 = (0,1,0)$;
	\item $\gamma_1 = (0,1,3)$, $\gamma_2 = (1,0,0)$, $\gamma_3 = (0,2,1)$, $\gamma_4 = (0,2,0)$;
	\item $\gamma_1 = (1,1,0)$, $\gamma_2 = (0,3,3)$, $\gamma_3 = (0,0,1)$.
	\end{enumerate}
Only (i) is consistent with $\cQ$. While (ii) satisfies the first bullet point of Definition \ref{defn:consistent.gamma.lists}, it fails the second with $u=1$ and $v=2$. The list (iii) fails the first bullet point since $\gamma_1 = (1,1,0)$ is not supported on either of the subquivers $Q^1$ or $Q^2$.
\end{example}

For the remainder of the subsection, we require that $\cQ$ is an admissible, ordered, Dynkin subquiver partition of $Q$. Hence each subquiver $Q^j$ has an associated set of positive roots $\Phi_+(Q^j)$ which are also naturally dimension vectors of $Q$ with entries of zero at each $i \notin Q^j_0$. Let $\Phi_+(\cQ) = \Union_{j\in[\ell]} \Phi_+(Q^j)$, and we extend the notion of a Reineke order on each $\Phi_+(Q^j)$ to the union $\Phi_+(\cQ)$ as follows. Write $r_j = \#\Phi_+(Q^j)$ and $\Phi_+(Q^j) = \{\beta^j_1,\ldots,\beta^j_{r_j}\}$ in a Reineke order for the Dynkin subquiver $Q^j$. Then a \emph{Reineke order} on $\Phi_+(\cQ)$ has the form
\[
(\beta^1_1 \prec \cdots \prec \beta^1_{r_1}) \prec (\beta^2_1 \prec \cdots \prec \beta^2_{r_2}) \prec \cdots \prec (\beta^\ell_{1} \prec \cdots \prec \beta^\ell_{r_\ell}).
\]
Observe that the ordering condition on $\cQ$ and the ordering condition \eqref{eqn:pos.rts.order} together imply that if $\beta'$ precedes $\beta''$ where $\beta',\beta''$ are 
	\begin{itemize}
	\item positive roots from \emph{the same} subquiver, then $\langle \beta',\beta''\rangle \geq 0$;
	\item positive roots from \emph{different} subquivers, then $\langle \beta',\beta''\rangle \leq 0$.
	\end{itemize}
From the above, we see that this Reineke order convention on $\Phi_+(\cQ)$ is equivalent to the ordering convention on the same set in \cite[Section~2.5]{ja2018}.

\section{Cohomological Hall algebras of quivers}
	\label{s:coha}

As a $D$-graded vector space, the \emph{cohomological Hall algebra} (aka CoHA) \emph{of a quiver $Q$} is
\begin{equation}
	\label{eqn:defn.CoHA}
	\coha(Q) = \coha := \Dirsum_{\gamma\in D} \coha_\gamma \nonumber
\end{equation}
where we recall that $\coha_\gamma$ is the equivariant cohomology algebra from \eqref{eqn:defn.H.gamma} and we think of the elements of $\coha_\gamma$ as polynomials via \eqref{eqn:H.gamma.polynomials}.

\subsection{Equivariant localization formula for CoHA multiplication}
	\label{ss:eq.local.coha.mult}
For $\gamma_1,\gamma_2\in D$ and $f_1\in \coha_{\gamma_1}$, $f_2\in\coha_{\gamma_2}$, we obtain $f_1 * f_2 \in \coha_{\gamma_1+\gamma_2}$ via the formula (see \cite[Theorem~2]{mkys2011})
\begin{multline}
		\label{eqn:coha.two.factor}
(f_1*f_2)(\omega_{1,1},\ldots,\omega_{1,\gamma_1(1)+\gamma_2(1)};\ldots;\omega_{n,1},\ldots,\omega_{n,\gamma_1(n)+\gamma_2(n)}) \\
 = \sum_{S_1 \in \binom{[\gamma_1(1) + \gamma_2(1)]}{\gamma_1(1)}} 
 \cdots 
	\sum_{S_n \in \binom{[\gamma_1(n) + \gamma_2(n)]}{\gamma_1(n)}}
		f_1(S_\bullet)f_2(\overline{S_\bullet}) \frac{\prod_{a\in Q_1} (\omega_{ha,\overline{S_{ha}}} - \omega_{ta,S_{ta}} )}{\prod_{i\in Q_0} (\omega_{i,\overline{S_i}} - \omega_{i,S_i} )}.
\end{multline}
To compute a term in the sum above, for each vertex $i\in Q_0$ we must choose a set \[S_i \in \binom{[\gamma_1(i) + \gamma_2(i)]}{\gamma_1(i)}\] which is a subset of $[\gamma_1(i) + \gamma_2(i)]$ with $\gamma_1(i)$ elements, and $\overline{S_i}$ denotes its complement in $[\gamma_1(i) + \gamma_2(i)]$ (and so has $\gamma_2(i)$ elements). Since $f_1 \in \coha_{\gamma_1}$ (respectively $f_2\in \coha_{\gamma_2}$) it is a polynomial separately symmetric in $n$ families of indeterminates, one family for each $i\in Q_0$, each with $\gamma_1(i)$ (resp.~$\gamma_2(i)$) variables. Hence, it makes sense to let $f_1(S_\bullet)$ denote the specialization $f_1(\omega_{i,j})$ for all $i\in Q_0$ and $j\in S_i$ (resp.~$f_2(\overline{S_\bullet})$ denotes the specialization $f_2(\omega_{i,j})$ for all $i\in Q_0$ and $j\in \overline{S_i}$). Finally, the products in the numerator and denominator should be interpreted according to the convention
\begin{equation}
	\label{eqn:prod.set.subscripts}
	\left(\omega_{x,U} - \omega_{y,V}\right) := \prod_{u\in U} \prod_{v\in V} \left(\omega_{x,u} - \omega_{y,v}\right).
\end{equation}
We comment that if $U$ or $V$ is empty, then the value of the product \eqref{eqn:prod.set.subscripts} is $1$. From the supersymmetry of $f_1$ and $f_2$, and the fact that we sum over all possible choices for $(S_1,\ldots,S_n)$, we see that the result of \eqref{eqn:coha.two.factor} will indeed have the supersymmetry required to be an element of $\coha_{\gamma_1+\gamma_2}$.

\begin{example}
	\label{ex:coha.mult}
Let $Q$ be the equioriented $A_3$ quiver $1 \leftarrow 2 \leftarrow 3$ as in Examples \ref{ex:Reineke.order.A3} and \ref{ex:Orbit.A3}. Consider $f(\omega_{1,1},\omega_{1,2}) \in \coha_{(2,0,0)}$ and $g(\omega_{2,1},\omega_{3,1}) \in \coha_{(0,1,1)}$. We have that
	\begin{align*}
	(f*g)(\omega_{1,1},\omega_{1,2},\omega_{2,1},\omega_{3,1}) 
		&= f\left(\omega _{1,1},\omega _{1,2}\right) g\left(\omega _{2,1},\omega _{3,1}\right)\\
	(g*f)(\omega_{1,1},\omega_{1,2},\omega_{2,1},\omega_{3,1}) 
		&=  f\left(\omega _{1,1},\omega _{1,2}\right) g\left(\omega _{2,1},\omega _{3,1}\right) \left(\omega _{1,1}-\omega _{2,1}\right) \left(\omega _{1,2}-\omega _{2,1}\right).
	\end{align*}
From this we observe that the product appears simpler when we multiply in an order which, when reading from left to right, goes \emph{against} the direction of an arrow. In this case, we mean that the dimension vector for $f$ is supported on the subquiver $Q^1$ consisting only of the vertex $1$, while the dimension vector for $g$ is supported on the subquiver $Q^2 = 2 \leftarrow 3$, and these two subquivers have the property that $Q^1$ sits at the head of an arrow from $Q^2$.
\end{example}

\begin{example}
	\label{ex:coha.mult.again}
Again we let $Q$ be the quiver $1 \leftarrow 2 \leftarrow 3$. This time, take $\omega_{2,1} \in \coha_{(0,1,0)}$ and $\omega_{3,1} \in \coha_{(0,0,1)}$ to see that
	\begin{align*}
		\omega_{2,1} * \omega_{3,1} &= \omega_{2,1}\omega_{3,1} \in \coha_{(0,1,1)} \\
		\omega_{3,1} * \omega_{2,1} &= \omega_{2,1}\omega_{3,1} (\omega_{2,1} - \omega_{3,1}) \in \coha_{(0,1,1)}
	\end{align*} 
where we again observe that going ``against'' the direction of an arrow (this time the arrow $2 \leftarrow 3$) makes the multiplication simpler. Now, consider $\omega_{3,1}$ not as an element of $\coha_{(0,0,1)}$, but instead as an element of $\coha_{(0,1,1)}$. This results in the products
	\begin{align*}
		\omega_{2,1} * \omega_{3,1} &= -\omega_{3,1} \in \coha_{(0,2,1)} \\
		\omega_{3,1} * \omega_{2,1} &= (\omega_{2,1}+\omega_{2,2})\,\omega_{3,1} - \omega_{3,1}^2 \in \coha_{(0,2,1)},
	\end{align*} 
illustrating the dependence on the dimension vector grading.
\end{example}

\subsection{Geometric definition for CoHA multiplication}
	\label{ss:geom.coha.mult}
The fact that \eqref{eqn:coha.two.factor} produces an honest polynomial (not a rational function) can be realized as a consequence of geometry. We will make only implicit use of the geometric definition of the CoHA multiplication in the sequel, but mention some relevant details here from \cite[Section~2.2]{mkys2011}. 

First, denote $\gamma = \gamma_1 + \gamma_2$. Set $P_{\gamma_1,\gamma_2}$ to be the parabolic subgroup of $\GGLL_{\gamma}$ which preserves, for each vertex $i\in Q_0$, the subspace $\CC^{\gamma_1(i)}\subseteq \CC^{\gamma(i)}$. That is, in the standard basis at each vertex, $P_{\gamma_1,\gamma_2}$ consists of upper block triangular matrices that, for each vertex factor, have diagonal blocks of sizes $\gamma_1(i)$ and $\gamma_2(i)$ for each $i\in Q_0$. Moreover, we let $\mathrm{Rep}_{\gamma_1,\gamma_2}$ denote the subspace of $\Rep_{\gamma}$ consisting of quiver representations which, for each arrow $a\in Q_1$, send the subspace $\CC^{\gamma_1(ta)} \subseteq \CC^{\gamma(ta)}$ into the subspace $\CC^{\gamma_1(ha)} \subseteq \CC^{\gamma(ha)}$. The multiplication mapping $\coha_{\gamma_1} \tensor \coha_{\gamma_2} \to \coha_{\gamma}$ is defined to be the result of the following compositions \cite[Section~2.2]{mkys2011}
\begin{multline}
	\label{eqn:geom.coha.mult}
\coho^\bullet_{\GGLL_{\gamma_1}}(\Rep_{\gamma_1})\tensor \coho^\bullet_{\GGLL_{\gamma_2}}(\Rep_{\gamma_2}) 
	\stackrel{\iso}{\longrightarrow} 
	\coho^\bullet_{\GGLL_{\gamma_1} \times \GGLL_{\gamma_2}}(\Rep_{\gamma_1}\dirsum\Rep_{\gamma_2}) \\
	\stackrel{\iso}{\longrightarrow} \coho^\bullet_{P_{\gamma_1,\gamma_2}}(\mathrm{Rep}_{\gamma_1,\gamma_2})
	\stackrel{\iota_*}{\longrightarrow} \coho^{\bullet+2c_1}_{P_{\gamma_1,\gamma_2}}(\Rep_{\gamma})
	\stackrel{p_*}{\longrightarrow} \coho^{\bullet+2c_1-2c_2}_{\GGLL_{\gamma}}(\Rep_{\gamma}). \nonumber
\end{multline} 
The first isomorphism is the K\"unneth formula and the second isomorphism follows from homotopy equivalence. The third map is the pushforward mapping along the closed equivariant embedding $\iota: \mathrm{Rep}_{\gamma_1,\gamma_2} \includes \Rep_\gamma$, and the fourth is the pushforward along the fibration $p: BP_{\gamma_1,\gamma_2} \to B\GGLL_\gamma$ with fiber $\GGLL_\gamma / P_{\gamma_1,\gamma_2}$ (using that $\Rep_\gamma$ is contractible). In particular, the fiber is a product of Grassmannians $\prod_{i\in Q_0} \mathrm{Gr}(\gamma_1(i),\CC^{\gamma(i)})$.

Thus, the formula \eqref{eqn:coha.two.factor} follows from the Atiyah--Bott, Berline--Vergne localization formula (\cite[(3.8)]{marb1984}, \cite{nbmv1982}) for equivariant pushforward mappings where $p_*$ is integration on a Grassmannian at each vertex. It is for this reason that we follow the terminology of \cite{rr2013} and call the formula \eqref{eqn:coha.two.factor} the \emph{equivariant localization formula} for the CoHA multiplication. The shifts in cohomological degree are 
\begin{align*}
	c_1 & = \dim_{\CC}(\Rep_{\gamma}) - \dim_{\CC}(\mathrm{Rep}_{\gamma_1,\gamma_2}) &
	c_2 & = \dim_{\CC}(\GGLL_\gamma/P_{\gamma_1,\gamma_2}).
\end{align*}
From this we see that while the $*$-multiplication respects the dimension vector grading of $\coha$, its relation to cohomological degree grading is
\[
	\coho^{b_1}_{\GGLL_{\gamma_1}}(\Rep_{\gamma_2}) 
	* \coho^{b_2}_{\GGLL_{\gamma_2}}(\Rep_{\gamma_2})
	\subseteq \coho^{b_1+b_2 - 2 \chi(\gamma_1,\gamma_2)}_{\GGLL_{\gamma}}(\Rep_{\gamma})
\]
where one checks that $c_2 = \sum_{i\in Q_0} \gamma_1(i)\gamma_2(i)$ and $c_1 = \sum_{a\in Q_1} \gamma_1(ta)\gamma_2(ha)$ so that $c_2-c_1 = \chi(\gamma_1,\gamma_2)$; e.g., compare this with the cohomological degree shifts in Examples \ref{ex:coha.mult} and \ref{ex:coha.mult.again}. 

\subsection{Multifactor products in the CoHA}
	\label{ss:multifactor.products}
	
A remarkable achievement of the seminal Kontsevich--Soibelman paper on CoHAs is the fact that the above multiplication is associative \cite[Theorem~1]{mkys2011}. Hence, the formula \eqref{eqn:coha.two.factor} has a well-defined multifactor version for products of the form \[ (f_1 \in \coha_{\gamma_1}) *\cdots*(f_r\in \coha_{\gamma_r}) \in \coha_{\sum_{u\in[r]} \gamma_u}.\] Given the required notational complexity, we do not write the general multifactor formula explicitly here. However we do mention that the rational functions which appear in the resulting formula match the equivariant localization formula for integration along multi-step flag manifolds (instead of simply Grassmannians), a connection we exploit in Section \ref{s:imp.vars}.

\subsection{The CoHA for $A_1$}
	\label{ss:coha.A1}
In this subsection we let $Q = A_1$, the quiver with a single vertex and no arrows. This example first appeared in Kontsevich--Soibelman's seminal work \cite[Section~2.5]{mkys2011}, and is replicated in introductory sections to many papers on CoHA. We reproduce it here for completion since the tensor product factors in our main theorem (Theorem \ref{thm:main}) are each subalgebras isomorphic to $\coha(A_1)$. 

For the quiver $A_1$, a dimension vector amounts to a choice of non-negative integer, and hence we have a decomposition $\coha(A_1) = \Dirsum_{r\geq 0} \coha_r$, where 
\begin{equation}
	\label{eqn:coha.r.A1}
\coha_r = \coho^\bullet(B\GL(\CC^r)) \iso \QQ[\omega_{1,1},\ldots,\omega_{1,r}]^{\Sym_r}.
\end{equation}
Throughout the remainder of the subsection, write $x_i = \omega_{1,i}$, where we warn that $x_i$ could refer to a variable in $\coha_n$ for any $n\geq i$. The location of $x_i$ will be clear from context or stated explicitly. We set $\psi_i = x_1^i \in \coha_1$. Now, let $\lambda = (\lambda_1,\ldots,\lambda_r)$ be a partition, i.e., with each $\lambda_u$ an integer such that $\lambda_1 \geq \cdots \geq \lambda_r\geq 0$. Applying the multi-factor multiplication gives
\begin{equation}
	\label{eqn:schur}
\psi_{\lambda_r} * \psi_{\lambda_{r-1} + 1} * \cdots * \psi_{\lambda_1+r-1} = s_\lambda(x_1,\ldots,x_r) \in \coha_r
\end{equation}
where $s_\lambda(x_1,\ldots,x_r)$ is the Schur symmetric function. Since the set \[\{s_\lambda(x_1,\ldots,x_r):\lambda\text{~a partition}\}\] is an additive basis for the ring of symmetric functions $\QQ[x_1,\ldots,x_r]^{\Sym_r} \iso \coha_r$ we see that $\coha(A_1)$ is generated by $\coha_1$. Moreover, a calculation with \eqref{eqn:coha.two.factor} shows that $\psi_i * \psi_j = -\psi_j * \psi_i$ for all $i$ and $j$. From this, one shows that $\coha(A_1)$ is the exterior algebra on countably many generators, namely the elements $\psi_i \in \coha_1$. That is, $\coha(A_1)  =  \bigwedge(\psi_0,\psi_1,\psi_2,\ldots)$.

\section{Important varieties from quiver data}
	\label{s:imp.vars}

The ideas underlying the definitions and results of this section go back at least to the work of Reineke \cite{mr2003} where desingularizations of quiver orbit closures were first described as incidence varieties in quiver flag varieties. These desingularizations appeared in \cite{rr2013} in the context of CoHAs for Dynkin quivers. Here, we extend the relevant results to the present general context of any acyclic quiver.

\subsection{Definitions}
	\label{ss:defn.imp.vars}

Let $\lambda = (\lambda_1,\ldots,\lambda_r)$ be a list of non-negative integers (not necessarily a partition) and let $|\lambda| = \sum_i \lambda_i$. We write $\Fl_\lambda$ to denote the flag variety
	\begin{equation}
		\label{eqn:flag.var}
		\Fl_\lambda = \left\{ 0 = E_0 \subseteq E_1 \subseteq \cdots \subseteq E_r = \CC^{|\lambda|}  \right\}
		\nonumber
	\end{equation}
where $E_u$ is a $\CC$-vector space and $\dim_\CC( E_u /E_{u-1}) = \lambda_u$ for each $u\in [r]$. Now, given a list of dimension vectors $\gamma_1,\ldots,\gamma_r$ for $Q$, we define the \emph{quiver flag variety}
	\begin{equation}
		\label{eqn:quiv.flag.var}
		\FFll_{\gamma_1,\ldots,\gamma_r} = \prod_{i\in Q_0} \Fl_{\gamma_1(i),\ldots,\gamma_r(i)}.
		\nonumber
	\end{equation}
That is, a point in $\FFll_{\gamma_1,\ldots,\gamma_r}$ is a list of $\CC$-vector spaces $(E_{i,u})$ with $i \in Q_0$, $u\in[r]$, and for each $i$ we have $E_{i,1} \subseteq \cdots \subseteq E_{i,r}$ (we assume $E_{i,0} = 0$) and such that $\dim_\CC(E_{i,u}/E_{i,u-1}) = \gamma_u(i)$. 

For each $i\in Q_0$, there are tautological bundles $\E_{i,u}$ over $\Fl_{\gamma_1(i),\ldots,\gamma_r(i)}$ whose fiber over the point $(E_{i,1}\subseteq \cdots \subseteq E_{i,r})$ is $E_{i,u}$. Moreover, for each $i$ and $u$, we obtain the rank $\gamma_i(u)$ quotient bundles $\F_{i,u} = \E_{i,u}/\E_{i,u-1}$. We also denote the pullbacks of these bundles (along the projections to each factor) to $\FFll_{\gamma_1,\ldots,\gamma_r}$ by $\E_{i,u}$ and $\F_{i,u}$. 

Given a subquiver partition $\cQ = \{Q^1,\ldots,Q^\ell\}$ for $Q$, we define the bundle on $\FFll_{\gamma_1,\ldots,\gamma_r}$
\begin{equation}
	\label{eqn:Gbundle.defn}
	\G(\cQ) = \Dirsum_{a\in \cQ_1} \Dirsum_{1\leq u<v \leq r} \Hom(\F_{ta,u}, \F_{ha,v}).
	\nonumber
\end{equation}
In the sequel, we denote the equivariant Euler class of the above bundle by $e(\G(\cQ))$.

For the dimension vector $\gamma = \sum_{u=1}^r \gamma_u$, we let 
\begin{equation}
	\label{eqn:pi.defn}
	\pi : \FFll_{\gamma_1,\ldots,\gamma_r} \times \Rep_\gamma \to \Rep_\gamma
	\nonumber
\end{equation}
denote the projection to the second factor. Moreover, we see that $\GGLL_\gamma$ also acts naturally on $\FFll_{\gamma_1,\ldots,\gamma_r}$ by the standard action of $\GL(\CC^{\gamma(i)})$ on $\CC^{\gamma(i)}$ at each vertex. Hence, we have a pushforward mapping $\pi_{*}$ in equivariant cohomology, whose target is $\coha_\gamma$.

\begin{defn}
	\label{defn:consistency.subset}
Given $\cQ = \{Q^1,\ldots,Q^\ell\}$ an admissible, ordered, subquiver partition, and the consistent list of dimension vectors $\gamma_1,\ldots,\gamma_r$ (with $\gamma = \sum_u \gamma_u$), define the associated \emph{consistency subset} of $\FFll_{\gamma_1,\ldots,\gamma_r} \times \Rep_\gamma$ to be
\begin{equation}
	\label{eqn:defn.incidence}
	\Sigma_{\gamma_1,\ldots,\gamma_r}(\cQ) 
		:= \left\{ 
			\left( (E_{i,u}), (\phi_a) \right) : \begin{array}{c}{\forall u\in[r]\text{~and~}\forall a\in \cQ_1} \\ {\text{we have~}\phi_a(E_{ta,u})\subseteq E_{ha,u}} \end{array}
			\right\}.
	\nonumber
\end{equation}
\end{defn}

We comment on the similarity of this incidence variety to that of \cite[Section~2]{mr2003} and \cite[Section~8]{rr2013}. In \cite{rr2013}, the variety above appears in the special case of $Q$ a Dynkin quiver and $\cQ = \{Q\}$. In that scenario, the seminal work of \cite{mr2003} provides an algorithm so that for appropriate choices of the dimension vectors $\gamma_1,\ldots,\gamma_r$, the consistency subset is a desingularization for a Dynkin quiver orbit closure. That is, the map $\pi$ gives a resolution of the singularities for a Dynkin quiver orbit when restricted to the consistency subset (we will utilize this result in our proofs of Propositions \ref{prop:consistency.subset.fund.class} and \ref{prop:consist.subset.desing}).

When $\cQ$ is a Dynkin subquiver partition, write $\Phi_+(\cQ) = \{\beta_1,\ldots,\beta_r\}$. A list $\kpm =(m_u)_{u\in[r]}$ is a \emph{$\cQ$-partition of $\gamma$} if $\sum_u m_u\,\beta_u = \gamma$. This generalizes the notion of Kostant partition (see Section \ref{ss:dynkins}) since if $Q$ is Dynkin and $\cQ = \{Q\}$, then $\kpm \kp \gamma$ is exactly the condition that $\kpm$ is a $\cQ$-partition. We also write $\kpm \kp \gamma$ when $\kpm$ is $\cQ$-partition of $\gamma$. Further, analogous to our abuse of notation in Section \ref{ss:dynkins}, we write \[m_u = m^j_{\beta^j_k}\] since for each $u\in[r]$ we have that $\beta_u \in \Phi_+(\cQ)$ is uniquely identified with some $\beta^j_k \in \Phi_+(Q^j)$ with $j\in[\ell]$ and $k\in[r_j]$. We further observe that $\kpm^j := (m^j_b)_{b\in\Phi_+(Q^j)}$ is a Kostant partition of the dimension vector $\gamma$ restricted to the vertices of $Q^j$. Let $\gamma^j$ denote the resulting dimension vector for $Q^j$. Hence, for each $j$, we see that a $\cQ$-partition $\kpm \kp \gamma$ amounts to a choice of a Dynkin quiver orbits $\Omega_{\kpm^j}(Q^j) \subset \Rep_{\gamma^j}(Q^j)$.

\begin{defn}
	\label{defn:m.stratum}
Suppose $\cQ$ is a Dynkin subquiver partition and $\kpm \kp \gamma$ is a $\cQ$-partition. The \emph{quiver stratum associated to $\kpm$} is the subspace
\begin{equation}
\label{eqn:quiver.strata.defn}
\eta_\kpm = \left\{ (\phi_a)_{a\in Q_1} \in \Rep_\gamma(Q) : (\phi_a)_{a\in Q^j_1} \in \Omega_{\kpm^j}(Q^j) \text{ for all $j\in[\ell]$} \right\} .
\nonumber
\end{equation}
That is, $\eta_\kpm$ consists of those quiver representations of $Q$ which, when restricted to the subquiver $Q^j$, lie in a specified Dynkin quiver orbit for all $j\in[\ell]$.
\end{defn}

\begin{notn}[Polynomials evaluated on bundles] 
	\label{notn:evaluate.bundles}
Suppose that we have a polynomial in many variables $f(x_{ij})$ where $i\in[n]$ and $j\in[r(i)]$ (for some natural numbers $n$ and $r(i)$) which is separately symmetric, for each $i$, in the set of variables $x_{i,\bullet}=\{x_{ij}:j\in[r(i)]\}$. Then, given $G$-equivariant vector bundles $\curly{V}_1,\ldots,\curly{V}_n$ on a $G$-space $X$ with the property that $\mathrm{rank}(\curly{V}_i) = r(i)$, we let $f(\curly{V}_\bullet) \in \coho^\bullet_G(X)$ represent the class obtained by evaluating $f$ on the Chern roots of the bundles $\curly{V}_1,\ldots,\curly{V}_n$.
\end{notn}

\subsection{Results for the important varieties}
	\label{ss:results.imp.vars}

We again turn our attention to the varieties, bundles, and mappings defined in Section \ref{ss:defn.imp.vars}.

\begin{prop}
	\label{prop:multifactor.pushforward.mult}
Let $\gamma_1,\ldots,\gamma_r$ be a list of dimension vectors which is consistent with the admissible, ordered,  subquiver partition $\cQ$ (the Dynkin condition is not required here). Then
\begin{equation}
	\label{eqn:multi.mult.as.pi}
	(f_1\in \coha_{\gamma_1}) * \cdots * (f_r\in\coha_{\gamma_r}) = \pi_{*}\left( e(\G({\cQ})) \cdot \prod_{u=1}^r f_u(\F_{\bullet,u})  \right).
\end{equation}
\end{prop}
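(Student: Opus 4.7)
The plan is to verify \eqref{eqn:multi.mult.as.pi} by computing both sides via Atiyah--Bott equivariant localization on the quiver flag variety $\FFll_{\gamma_1,\ldots,\gamma_r}$ and matching the resulting sums term by term. Since $\Rep_\gamma$ is $\GGLL_\gamma$-equivariantly contractible, $\pi_*$ is simply integration over the fiber $\FFll_{\gamma_1,\ldots,\gamma_r}$, whose torus fixed points correspond to the data of an ordered set partition $[\gamma(i)] = S_1^i \sqcup \cdots \sqcup S_r^i$ with $|S_u^i| = \gamma_u(i)$ at each vertex $i\in Q_0$. At such a fixed point, the quotient bundle $\F_{i,u}$ has Chern roots $\{\omega_{i,j} : j\in S_u^i\}$; using the standard identification $T\FFll_{\gamma_1,\ldots,\gamma_r} \iso \Dirsum_{i\in Q_0}\Dirsum_{u<v}\Hom(\F_{i,u},\F_{i,v})$, the restriction $e(\G(\cQ))|_{\mathrm{fp}}$ and the Euler class of the tangent space both become explicit products in the convention of \eqref{eqn:prod.set.subscripts}, while $\prod_u f_u(\F_{\bullet,u})|_{\mathrm{fp}}$ becomes $\prod_u f_u(S_\bullet^u)$.

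For the left-hand side, iterating \eqref{eqn:coha.two.factor} via associativity (equivalently, invoking the multi-step flag manifold version of the localization formula noted at the end of Section \ref{ss:multifactor.products}) expresses $f_1*\cdots*f_r$ as a sum over the \emph{same} ordered set partitions, but with the numerator product running over \emph{all} arrows $a\in Q_1$ rather than only over $\cQ_1$. Matching this with the localization computation of the right-hand side then reduces, term by term, to showing that for each $a\in Q_1\setminus\cQ_1$ the factor $\prod_{u<v}(\omega_{ha,S_v^{ha}} - \omega_{ta,S_u^{ta}})$ is the empty product (hence equals $1$) in every term of the sum.

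The crux is this vanishing, which is exactly where the consistency hypothesis enters. For $a\in Q_1\setminus\cQ_1$ with $ha\in Q^i_0$ and $ta\in Q^j_0$, the ordering condition on $\cQ$ forces $i<j$; on the other hand $S_u^{ta}\neq\emptyset$ requires $\gamma_u(ta)>0$ and hence $j(u)=j$, while $S_v^{ha}\neq\emptyset$ requires $j(v)=i$. A pair $u<v$ with both sets nonempty would then force $j(u)=j>i=j(v)$, contradicting $j(u)\leq j(v)$; hence the factor collapses to $1$ in the convention \eqref{eqn:prod.set.subscripts}, and the two term-by-term sums agree. I expect the main obstacle to be less conceptual than clerical: carefully setting up the multi-step localization bookkeeping so that the vertex, step, and arrow indices line up correctly with the convention \eqref{eqn:prod.set.subscripts}. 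Once the consistency condition is translated into the vanishing above, the equality \eqref{eqn:multi.mult.as.pi} follows.
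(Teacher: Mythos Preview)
Your proposal is correct and follows essentially the same route as the paper: both sides are computed via equivariant localization on $\FFll_{\gamma_1,\ldots,\gamma_r}$, and the consistency hypothesis is used to show that arrows $a\in Q_1\setminus\cQ_1$ contribute only empty products. The paper packages this vanishing as a separate two-factor lemma (Lemma~\ref{lem:need.arrow.subset}) applied iteratively via associativity, whereas you argue it directly at the multifactor level; the core argument is the same.
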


\begin{proof}
The formula of \cite[Lemma~8.1]{rr2013} applies to the multiplication in our $\coha$, even though our quiver $Q$ is not assumed to be Dynkin. That is, we have from the equivariant localization description of the multifactor multiplication that
	\begin{equation}
		\label{eqn:rr.multi.lemma.int}
	f_1* \cdots *f_r = \int_{\FFll_{\gamma_1,\ldots,\gamma_r}} e(\G) \cdot \prod_{u=1}^r f_u(\F_{\bullet,u})
	\end{equation}
where $\G$ is the bundle
	\[
	\G = \Dirsum_{a \in Q_1} \Dirsum_{1\leq u<v \leq r} \Hom(\F_{ta,u}, \F_{ha,v})
	\]
and
\begin{equation}
	\label{eqn:rr.multi.lemma.int2}
	\int_{\FFll_{\gamma_1,\ldots,\gamma_r}}\,:\,
		\coho^\bullet_{\GGLL_\gamma}(\FFll_{\gamma_1,\ldots,\gamma_r})
		 \to \coho^\bullet_{\GGLL_\gamma}(pt)
\end{equation}
is the pushforward mapping in equivariant cohomology to a point. Observe that this $\G$ differs from $\G(\cQ)$ only in that the Whitney sum in $\G(\cQ)$ is over (possibly) fewer arrows. To justify the removal of these arrows from the sum, we prove Lemma \ref{lem:need.arrow.subset}; cf.~Example \ref{ex:coha.mult}.

In fact, we comment that one key point of our definitions up to now is that our ordering and admissibility criteria are chosen specifically to guarantee the truth of this lemma. In the statement below, we need the following definition. Given subquivers $Q'$ and $Q''$ of $Q$, we can form the subquiver $Q'\union Q''$ by taking the respective unions of their vertex and arrow sets. 

\begin{lem}
	\label{lem:need.arrow.subset}
Suppose $\cQ = \{Q^j:j\in[\ell]\}$ is an admissible, ordered, subquiver partition. Let $\gamma'$ and $\gamma''$ be dimension vectors with their respective supports in the subquivers $Q'=\Union_{j=1}^{j_1} Q^j$ and $Q''=\Union_{j=j_2}^\ell Q^{j}$ with $j_1\leq j_2$. Consider the product formula for $g_1 * g_2$ where $g_1\in \coha_{\gamma'}$ and $g_2\in\coha_{\gamma''}$. For any $a \notin \cQ_1$, we have that the numerator factor \[\left(\omega_{ha,\overline{S_{ha}}} -\omega_{ta,S_{ta}}\right) = 1.\]
\end{lem}

\begin{proof}[Proof of Lemma \ref{lem:need.arrow.subset}]
Take $a \notin \cQ_1$. In each case below, we argue that at least one of $S_{ta}$ or $\overline{S_{ha}}$ is empty. We consider the cases 

\begin{center} (I) $j_1 < j_2$ and (II) $j_1 = j_2$. \end{center} 

In case (I), the ordered condition on $\cQ$ implies that we can not have both $ha \in Q''_0$ and $ta \in Q'_0$. If $ha \notin Q''_0$, then the support hypothesis implies $\gamma''(ha) = 0$. Thus the only choice for $S_{ha}$ is the set $[\gamma'(ha)]$, and so $\overline{S_{ha}}=\emptyset$. Similarly, if $ta\notin Q'$, we get $\gamma'(ta) = 0$ and thus $S_{ta} = \emptyset$.

In case (II), write $j = j_1 = j_2$. The admissibility condition on $\cQ$ implies that we can not have both $ha \in Q^j_0$ and $ta\in Q^j_0$, otherwise we would have a loop in the contracted quiver from Definition \ref{defn:dsqp}(c). Suppose $ha \notin Q^j_0$ (the case $ta \notin Q^j_0$ is similar), and we consider two cases: $ha\in Q'_0$ or $ha\in Q''_0$. If $ha\in Q'_0$, then $ha \in Q^{j'}_0$ for some $j'<j$, and in particular the support hypothesis implies that $\gamma''(ha) = 0$; i.e., $\overline{S_{ha}} = \emptyset$. If $ha\in Q''_0$, then the ordering condition implies that $ta\in Q^{j''}_0$ with $j''>j$. Then the support condition implies that $\gamma'(ta) = 0$ from whence it follows that $S_{ta} = \emptyset$.
\end{proof}

\noindent \textit{Proof of Proposition \ref{prop:multifactor.pushforward.mult} continued}: Using the associativity of the $*$-multiplication, the consistency restriction on $\gamma_1,\ldots,\gamma_r$ implies that any two-factor product we encounter in the computation of $f_1 * \cdots * f_r$ will satisfy the hypotheses of Lemma \ref{lem:need.arrow.subset} for some $j_1$ and $j_2$. Hence by repeated application of the lemma we see that $e(\G)$ in \eqref{eqn:rr.multi.lemma.int} can be replaced with $e(\G(\cQ))$. The theorem then follows from the observation that $\Rep_\gamma$ is $\GGLL_\gamma$-equivariantly contractible, which identifies the integral map \eqref{eqn:rr.multi.lemma.int2} with the $\pi_{*}$ map.
\end{proof}

\begin{remark}
It is an illustrative exercise in the definitions up to this point in the paper to check that the polynomials $f_u$ are being evaluated on the appropriate number of variables on the righthand side of Equation \eqref{eqn:multi.mult.as.pi}.
\end{remark}

We prefer the formula \eqref{eqn:multi.mult.as.pi}, as opposed to the integral formula \eqref{eqn:rr.multi.lemma.int} in the proof of Proposition \ref{prop:multifactor.pushforward.mult}, because we can identify the Euler class of $\G(\cQ)$ with the (equivariant) fundamental class of the consistency subset.

\begin{prop}
	\label{prop:consistency.subset.fund.class}
$[\Sigma_{\gamma_1,\ldots,\gamma_r}(\cQ)] = e(\G(\cQ))$.
\end{prop}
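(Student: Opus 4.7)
The plan is to realize $\Sigma := \Sigma_{\gamma_1,\ldots,\gamma_r}(\cQ)$ as the transverse zero locus of a $\GGLL_\gamma$-equivariant section of the pullback of $\G(\cQ)$ to $\FFll_{\gamma_1,\ldots,\gamma_r} \times \Rep_\gamma$; once this is set up, the identity $[\Sigma] = e(\G(\cQ))$ follows immediately from the standard Thom/Euler class formula for transverse sections.

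First, I would construct the section. For each $a \in \cQ_1$, consider the trivial bundle on $\FFll_{\gamma_1,\ldots,\gamma_r} \times \Rep_\gamma$ with fiber $\Hom(\CC^{\gamma(ta)},\CC^{\gamma(ha)})$, and note that the filtration-preserving maps (those $\phi_a$ with $\phi_a(\E_{ta,u}) \subseteq \E_{ha,u}$ for every $u \in [r]$) form a $\GGLL_\gamma$-equivariant subbundle. Choosing bases adapted to the flags, a direct block calculation identifies the quotient canonically with $\Dirsum_{1 \leq u < v \leq r} \Hom(\F_{ta,u}, \F_{ha,v})$: the blocks that force a failure of the condition $\phi_a(\E_{ta,u}) \subseteq \E_{ha,u}$ are precisely those sending $\F_{ta,u}$ into $\F_{ha,v}$ with $v > u$. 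Summing over $a \in \cQ_1$, the total quotient bundle is exactly $\G(\cQ)$. The tautological data $(\phi_a)_{a \in \cQ_1}$ on $\FFll \times \Rep_\gamma$ defines a global section of the ambient bundle; composing with the quotient projection yields a section $s$ of $\G(\cQ)$ whose zero locus is, by the very definition of the consistency subset, equal to $\Sigma$.

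Next, I would verify transversality by a dimension count. Project $\Sigma \to \FFll_{\gamma_1,\ldots,\gamma_r}$: over any flag configuration, the fiber is the linear subspace of $\Rep_\gamma$ consisting of those $(\phi_a)_{a \in Q_1}$ for which $\phi_a$ is filtration-preserving for $a \in \cQ_1$ and unconstrained for $a \notin \cQ_1$. By the identification above, this fiber has codimension $\mathrm{rank}(\G(\cQ))$ in $\Rep_\gamma$. Hence $\Sigma$ is a sub-vector-bundle of the trivial bundle $\FFll \times \Rep_\gamma$ over $\FFll$; in particular it is smooth and of codimension $\mathrm{rank}(\G(\cQ))$ in $\FFll_{\gamma_1,\ldots,\gamma_r}\times\Rep_\gamma$. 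Since the zero locus of $s$ is smooth of the expected codimension, $s$ is transverse to the zero section, and the standard $\GGLL_\gamma$-equivariant Euler class identity yields $[\Sigma] = e(\G(\cQ))$.

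The main obstacle is the canonical identification, in Step 1, of the quotient of the Hom-bundle by its filtration-preserving subbundle with $\G(\cQ)$. This is essentially linear algebra—a careful accounting of which blocks of $\phi_a$ must vanish to preserve each step of the filtration—but it is the only place where the combinatorics of $u < v$ in the definition of $\G(\cQ)$ actually enters; the remainder of the argument is formal once this identification is in hand.
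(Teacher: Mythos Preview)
Your argument follows the same core strategy as the paper's: realize $\Sigma$ as the vanishing locus of a section of (a bundle with the same Euler class as) $\G(\cQ)$ and invoke the standard Euler-class formula. The paper carries this out locally near torus fixed points, choosing local splittings $\overline{\F}_{i,u}$ of the tautological filtrations to build a section, and then uses torus localization to pass from the fixed-point equalities to the global one; you instead work globally with the quotient of the trivial $\Hom$-bundle by its filtration-preserving subbundle, and your transversality check (observing that $\Sigma$ is a sub-vector-bundle over $\FFll$) is more direct than the paper's ``genericity'' assertion.

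One imprecision worth flagging: the quotient bundle is not \emph{canonically} isomorphic to $\G(\cQ)=\bigoplus_{a\in\cQ_1}\bigoplus_{u<v}\Hom(\F_{ta,u},\F_{ha,v})$. The ``block'' identification you describe depends on a choice of splittings of the tautological filtrations, which do not exist globally. What is canonical is a filtration on the quotient bundle whose associated graded is $\G(\cQ)$; since Euler classes are multiplicative in short exact sequences of complex vector bundles, this is enough to conclude $e(\text{quotient})=e(\G(\cQ))$, and your argument goes through with that adjustment.
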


\begin{proof}
We expand on the proof of \cite[Lemma~8.3]{rr2013}. Consider a torus fixed point $t \in \Sigma_{\gamma_1,\ldots,\gamma_r}(\cQ)$. On some neighborhood of $t$, say $U_t \subset \FFll_{\gamma_1,\ldots,\gamma_r} \times \Rep_\gamma$, we can (locally) choose subbundles $\overline{\F}_{i,u}\subset \E_{i,u}$ for all $i\in Q_0$ and $u\in[r]$, such that $\overline{\F}_{i,u} \dirsum \E_{i,u-1} = \E_{i,u}$. In particular, this means that on $U_t$ we have $\E_{i,u} = \Dirsum_{w\leq u} \overline{\F}_{i,w}$ for all $i$ and $u$.

We can form a vector bundle on $U_t$ (which is locally identified with $\G(\cQ)$) by
\[
\overline{\G} = \Dirsum_{a\in \cQ_1} \Dirsum_{u<v} \Hom\left(\overline{\F}_{ta,u},\F_{ha,v}\right).
\]
Recall that the Euler class corresponds to the fundamental class for a vanishing locus of a generic section (i.e., transverse to the zero section). Let $F_{i,u}$ denote the fiber of $\overline{\F}_{i,u}$ (it is a subspace of $E_{i,u}$). We have a natural section $\sigma:U_t \to \overline{\G}$ given by
\[
\left((E_{i,u})_{i\in Q_0,u\in[r]},(\phi_a)_{a\in Q_1}\right) 
	\longmapsto 
	\sum_{a\in\cQ_1,~u<v} \overline{\phi}_{a,u,v}
\]
with $\overline{\phi}_{a,u,v}$ defined to be the composition of the inclusion $F_{ta,u} \includes E_{ta,r}$ and the given linear map $\phi_a:E_{ta,r} \to E_{ha,r}$, followed by the quotient mapping to $E_{ha,r} / ( \Dirsum_{w \neq v} F_{ha,w} )$. The section $\sigma$ satisfies the genericity condition and, moreover, we now show that the zero locus of $\sigma$ is exactly $\Sigma_{\gamma_1,\ldots,\gamma_r}(\cQ) \intersect U_t$. 

Indeed, it immediately follows from the definitions that if $\left((E_{i,u}) , (\phi_a)\right) \in U_t$ has $\phi_a(E_{ta,u}) \subseteq E_{ha,u}$ for all $a \in \cQ_1$ and $u\in[r]$, then $\overline{\phi}_{a,u,v} = 0$ for all choices of $a,u,v$. Conversely, if $\overline{\phi}_{a,u,v}$ is identically zero for all $a,u,v$ we see that, fixing $a$ and $u$, we get that $\phi_a(F_{ta,u}) \subseteq \Dirsum_{w\neq v} F_{ha,w}$ for all $v>u$. Hence $\phi_a(F_{ta,u})\intersect F_{ha,v}=0$ for all $v>u$, and consequently we obtain $\phi_a(F_{ta,u}) \subseteq E_{ha,u}$. Since $u$ was arbitrary, we have \[\phi_a(E_{ta,u}) = \phi_a\left(\Dirsum_{w\leq u} F_{ta,w}\right) \subseteq \Dirsum_{w\leq u} F_{ha,w} = E_{ha,u}.\] Finally, since this holds for all choices of $a$, we have $\left((E_{i,u}) , (\phi_a)\right) \in U_t$ must be an element of $\Sigma_{\gamma_1,\ldots,\gamma_r}(\cQ)$.

Hence we have the string of equalities
\[
e(\G(\cQ))|_{t} = e(\overline{\G})|_t = [\Sigma_{\gamma_1,\ldots,\gamma_r}(\cQ)]|_t.
\]
Since $t$ is arbitrary, this holds at each torus fixed point, from whence the result follows from the localization theorem for torus equivariant cohomology.
\end{proof}

In the remainder of the section, let $\kpm$ be a $\cQ$-partition for the dimension vector $\gamma$. Observe that setting $\gamma_u = m_u \beta_u$ (with the roots $\beta_u$ in Reineke order) makes $\gamma_1,\ldots,\gamma_r$ a consistent list of dimension vectors with $\sum_u \gamma_u = \gamma$. Let $\pi_\kpm$ denote the restriction of $\pi$ to the consistency subset $\Sigma_{m_1\beta_1,\ldots,m_r\beta_r}(\cQ) \subseteq \FFll_{m_1\beta_1,\ldots,m_r\beta_r}\times \Rep_\gamma$.

\begin{prop}
	\label{prop:consist.subset.desing}
The mapping $\pi_\kpm: \Sigma_{m_1\beta_1,\ldots,m_r\beta_r}(\cQ) \to \Rep_\gamma$ has image $\overline{\eta}_\kpm$, and is a desingularization of the stratum closure $\overline{\eta}_\kpm$. As a consequence, when $\overline{\eta}_\kpm$ has rational singularities, we have \[\pi_*([\Sigma_{m_1\beta_1,\ldots,m_r\beta_r}(\cQ)]) = [\overline{\eta}_\kpm] \in \coha_\gamma.\] 
\end{prop}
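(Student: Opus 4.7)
The plan is to reduce this statement to the known Dynkin case (Reineke \cite{mr2003}, as used in \cite[Section~8]{rr2013}) by showing that all of the relevant data decompose as a product indexed by the subquivers $Q^j$, together with a free affine-space factor coming from the ``between-subquiver'' arrows. Specifically, because each $\beta_u \in \Phi_+(Q^{j(u)})$ has support contained in $Q^{j(u)}_0$, the dimension vector $m_u \beta_u$ vanishes off $Q^{j(u)}_0$. Consequently the quiver flag variety splits as
\[
\FFll_{m_1\beta_1,\ldots,m_r\beta_r} \;\iso\; \prod_{j=1}^{\ell} \FFll_{(m_u\beta_u)_{j(u)=j}}(Q^j),
\]
and the representation space splits as
\[
\Rep_\gamma(Q) \;\iso\; \left(\prod_{j=1}^{\ell} \Rep_{\gamma^j}(Q^j)\right) \times R, \qquad R := \Dirsum_{a \in Q_1 \setminus \cQ_1} \Hom\bigl(\CC^{\gamma(ta)},\CC^{\gamma(ha)}\bigr),
\]
where $R$ collects the arrows running between distinct Dynkin subquivers.

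Next I would analyze how the consistency condition of Definition~\ref{defn:consistency.subset} and the stratum of Definition~\ref{defn:m.stratum} interact with these decompositions. The consistency condition only constrains arrows $a \in \cQ_1$, each of which lies in a unique subquiver $Q^{j(a)}$; moreover, the inclusion $\phi_a(E_{ta,u}) \subseteq E_{ha,u}$ is automatic whenever $j(u) \neq j(a)$, since then at least one of the flag steps involved has rank zero. Hence the consistency conditions decouple across $j$, giving
\[
\Sigma_{m_1\beta_1,\ldots,m_r\beta_r}(\cQ) \;\iso\; \left(\prod_{j=1}^{\ell} \Sigma_{\kpm^j}(Q^j)\right) \times R,
\]
where $\Sigma_{\kpm^j}(Q^j)$ is the analogous Reineke-style incidence variety for the Dynkin quiver $Q^j$ with Kostant partition $\kpm^j$ (and its positive roots in Reineke order). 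By the same bookkeeping applied to Definition~\ref{defn:m.stratum}, and using that $R$ is already closed in itself, $\overline{\eta}_\kpm \iso \bigl(\prod_j \overline{\Omega_{\kpm^j}(Q^j)}\bigr) \times R$.

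Under these identifications, $\pi_\kpm$ is the product of the Dynkin maps $\Sigma_{\kpm^j}(Q^j) \to \Rep_{\gamma^j}(Q^j)$ with the identity on $R$. Each factor is a desingularization of $\overline{\Omega_{\kpm^j}(Q^j)}$ by Reineke's theorem \cite{mr2003}, applied exactly as in the Dynkin CoHA setting of \cite[Section~8]{rr2013}. Taking products with an identity on a smooth affine factor preserves properness, birationality, and smoothness of the source, so $\pi_\kpm$ has image $\overline{\eta}_\kpm$ and is itself a desingularization. Under the rational singularities hypothesis, the standard fact that a proper birational map from a smooth source onto a variety with rational singularities pushes the fundamental class of the source to the fundamental class of the image then yields
\[
\pi_*\bigl([\Sigma_{m_1\beta_1,\ldots,m_r\beta_r}(\cQ)]\bigr) = [\overline{\eta}_\kpm] \in \coha_\gamma.
\]
The main obstacle is the clean verification of the product decomposition of the consistency subset; once that decoupling is in hand the image description, the desingularization property, and the class identity all reduce to componentwise applications of the Dynkin case.
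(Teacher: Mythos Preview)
Your proposal is correct and follows essentially the same approach as the paper's proof: both reduce to Reineke's Dynkin desingularization theorem by exhibiting the product decomposition $\overline{\eta}_\kpm \iso \bigl(\prod_j \overline{\Omega_{\kpm^j}(Q^j)}\bigr) \times R$ and observing that the between-subquiver arrows contribute only a free affine factor. Your version is in fact more explicit than the paper's, which states the decomposition of $\overline{\eta}_\kpm$ and then simply invokes ``repeated application of Reineke's desingularization theorem'' without writing out the companion decompositions of $\FFll_{m_1\beta_1,\ldots,m_r\beta_r}$ and $\Sigma_{m_1\beta_1,\ldots,m_r\beta_r}(\cQ)$ that you supply.
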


\begin{proof}
When $Q$ is Dynkin and $\cQ = \{Q\}$, the fact that the consistency subset is a desingularization of the quiver orbit $\overline{\eta}_\kpm = \overline{\Omega}_\kpm$ (via the mapping $\pi$) was established by Reineke \cite{mr2003}. In the more general context stated above, $\eta_\kpm$ is a product of the Dynkin quiver orbits $\Omega_{\kpm^j}(Q^j)$ (one for each subquiver $Q^j \in \cQ$) and vector spaces $\Hom(\CC^{\gamma(ta)},\CC^{\gamma(ha)})$ with $a \in Q_1 \setminus \cQ_1$. Thus \[\overline{\eta}_\kpm  = \left(\prod_{j\in[\ell]} \overline{\Omega_{\kpm^j}(Q^j)}\right) \times \left(\prod_{a\notin \cQ_1} \Hom(\CC^{\gamma(ta)},\CC^{\gamma(ha)}) \right) \subseteq \Rep_\gamma.\]
The decomposition above means that the first assertion follows from repeated application of Reineke's desingularization theorem \cite[Theorem~2.2]{mr2003}.
\end{proof}

\begin{remark}[Regarding the second assertion of Proposition \ref{prop:consist.subset.desing}]
When $Q$ is a Dynkin quiver, Bobinski--Zwara have proven that $\overline{\Omega_\kpm}$ is guaranteed to have rational singularities provided $Q$ is an orientation of a type $A$ or $D$ Dynkin diagram \cite{gbgz2001,gbgz2002}. Therefore, when $\cQ$ consists only of type $A$ or $D$ subquivers, the Bobinski--Zwara results already guarantee that $\overline{\eta}_\kpm$ has rational singularities. To the author's knowledge, it is an open question whether type $E$ Dynkin quiver orbit closures are normal, Cohen--Macauley, and/or admit only rational singularities; we are aware of some partial affirmative results when all of the quiver's vertices are sinks/sources \cite{ks2015}. In any event, in the rest of the paper, we assume type $E$ orbit closures \emph{do} have only rational singularities, and so we apply Proposition \ref{prop:consist.subset.desing} (and also the next Proposition \ref{prop:eta.m.prod.of.1s}) freely in the sequel without further comment. We note, however, that our main theorem already prohibits orientations of $E_8$ as subquivers in $\cQ$, albeit for a different reason (a punctilio we discuss in Section \ref{s:structure}).
\end{remark}

\begin{prop}
	\label{prop:eta.m.prod.of.1s}
If $\overline{\eta}_\kpm \subset \Rep_\gamma$ has rational singularities, then its $\GGLL_\gamma$ equivariant fundamental class is given by
	\begin{equation}
		\label{eqn:eta.m.prod.of.1s}
		[\overline{\eta}_\kpm] = (1\in \coha_{m_1\beta_1}) * \cdots * (1 \in \coha_{m_r\beta_r}) \in \coha_\gamma.
		\nonumber
	\end{equation}
\end{prop}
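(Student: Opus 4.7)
The proposition is essentially a three-line corollary obtained by stringing together the three preceding propositions. My plan is to set $\gamma_u = m_u\beta_u$ and $f_u = 1 \in \coha_{m_u\beta_u}$ for each $u \in [r]$, then apply Propositions \ref{prop:multifactor.pushforward.mult}, \ref{prop:consistency.subset.fund.class}, and \ref{prop:consist.subset.desing} in succession.

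First I would verify that the list $m_1\beta_1,\ldots,m_r\beta_r$ is consistent with $\cQ$ in the sense of Definition \ref{defn:consistent.gamma.lists}. This is immediate from the Reineke-order convention on $\Phi_+(\cQ)$ described at the end of Section \ref{ss:quivers.subquiver.partitions}: each $\beta_u$ lies in a unique $\Phi_+(Q^{j(u)})$, so $m_u\beta_u$ has support in $Q^{j(u)}$; and the block structure of the Reineke order (all roots of $Q^1$ first, then all roots of $Q^2$, etc.) guarantees $j(1)\leq \cdots \leq j(r)$, which is exactly the second bullet of Definition \ref{defn:consistent.gamma.lists}.

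Having established consistency, Proposition \ref{prop:multifactor.pushforward.mult} directly yields
\[
(1\in\coha_{m_1\beta_1}) * \cdots * (1\in\coha_{m_r\beta_r})
= \pi_*\!\left( e(\G(\cQ)) \cdot \prod_{u=1}^r 1(\F_{\bullet,u}) \right)
= \pi_*\bigl(e(\G(\cQ))\bigr),
\]
since by Notation \ref{notn:evaluate.bundles} the evaluation of the constant polynomial $1$ on any tuple of Chern roots is simply $1$. Applying Proposition \ref{prop:consistency.subset.fund.class} replaces $e(\G(\cQ))$ with the equivariant fundamental class $[\Sigma_{m_1\beta_1,\ldots,m_r\beta_r}(\cQ)]$, so the right-hand side becomes $\pi_*\bigl([\Sigma_{m_1\beta_1,\ldots,m_r\beta_r}(\cQ)]\bigr)$. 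Finally, the standing rational-singularities hypothesis on $\overline{\eta}_\kpm$ lets us invoke the last sentence of Proposition \ref{prop:consist.subset.desing}, which identifies this pushforward with $[\overline{\eta}_\kpm]\in \coha_\gamma$.

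I do not anticipate any real obstacle: the proof is purely an exercise in chaining the previously established machinery, and the hypotheses needed for each step (consistency of the dimension vector list and rational singularities of $\overline{\eta}_\kpm$) are either automatic from the Reineke ordering or explicitly assumed. The only punctilious step worth making explicit in writing is the verification that $(m_u\beta_u)_{u=1}^r$ is consistent with $\cQ$, to justify the use of Proposition \ref{prop:multifactor.pushforward.mult}.
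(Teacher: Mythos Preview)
Your proposal is correct and essentially identical to the paper's own proof: both simply chain Propositions~\ref{prop:multifactor.pushforward.mult}, \ref{prop:consistency.subset.fund.class}, and \ref{prop:consist.subset.desing} (the paper applies them in the reverse order, starting from $\pi_*([\Sigma])=[\overline{\eta}_\kpm]$ and working back to the CoHA product, but this is immaterial). Your explicit verification that $(m_u\beta_u)_u$ is consistent with $\cQ$ is a nice touch; the paper makes this same observation in the paragraph immediately preceding Proposition~\ref{prop:consist.subset.desing}.
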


\begin{proof}
Proposition \ref{prop:consist.subset.desing} says that $\pi_*([\Sigma_{m_1\beta_1,\ldots,m_r\beta_r}(\cQ)]) = [\overline{\eta}_\kpm]$. Hence, Proposition \ref{prop:consistency.subset.fund.class} then implies $[\overline{\eta}_\kpm] = \pi_*(e(\G(\cQ)))$ and so Proposition \ref{prop:multifactor.pushforward.mult} implies the result.
\end{proof}

\section{Quantum dilogarithms and Poincar\'e series of CoHAs}
	\label{s:q.dilog.poincare}

Let $q^{1/2}$ be an indeterminate (with square denoted $q$). Let $d$ be a positive integer, and set $\cP_d = \prod_{k=1}^{d}(1-q^k)^{-1}$. Furthermore, set $\cP_0 = 1$. We note that $\cP_d$ is the Poincar\'e series of the algebra $\coho^\bullet_{\GL(\CC^d)}(pt) = \coho^\bullet(B\GL(\CC^d))$. Given an indeterminate $z$ we define the \emph{quantum dilogarithm series}, an element of the algebra $\QQ(q^{1/2})[[z]]$, to be
	\begin{equation}
		\label{eqn:q.dilog.defn}
	\EE(z) = \sum_{d = 0}^\infty (-1)^d z^d\,q^{d^2/2}\,\cP_d.
	\end{equation}
If we allow $z$ to keep track of the dimension vector grading, and allow $q$ to keep track of the cohomological degree grading, we therefore notice that $\EE(z)$ is a $q$-shifted (by $q^{r^2/2}$) and twisted (by minus signs) Poincar\'e series for the CoHA $\coha(A_1)$. Indeed, setting $(\coha_r)_k$ to be the degree $2k$ part of $\coha_r$, i.e., $(\coha_r)_k \iso \coho^{2k}(B\GL(\CC^r))$, (refer to the notations of Section \ref{ss:coha.A1}), we have
	\begin{equation}
		\label{eqn:EE.is.PA1}
		\EE(z) = \sum_{r,k \geq 0} (-z)^r q^{r^2/2 + k} \dim\left( (\coha_{r})_k \right).
	\end{equation}
We will use the connection between \eqref{eqn:q.dilog.defn} and \eqref{eqn:EE.is.PA1} to establish our main theorem. In particular, we will need the the major results of \cite{ja2018}, which we now restate for completeness.

First, we define the \emph{quantum algebra} $\A_Q$ of the quiver $Q$ to be the $\QQ(q^{1/2})$-algebra with vector space basis given by symbols $y_\gamma$, one for each dimension vector $\gamma \in D$, and subject to the relations
	\begin{equation}
		\label{eqn:qalg.relation}
	y_{\gamma_1+\gamma_2} = -q^{-\langle\gamma_1,\gamma_2\rangle/2}\,y_{\gamma_1}y_{\gamma_2}
	\end{equation}
for every $\gamma_1,\gamma_2 \in D$. In particular, we have that the symbols $\{y_{e_i}: i\in Q_0\}$ generate $\A_Q$ as an algebra. Let $\widehat{\A}_Q$ denote the \emph{completed quantum algebra} in which formal power series in the $y_\gamma$ symbols are allowed, but are subject to the same relation \eqref{eqn:qalg.relation}.

\begin{prop}[\cite{ja2018}, Theorem~4.2]
	\label{prop:EQ.factors}
Let $\cQ$ be an admissible, ordered, Dynkin subquiver partition. Further, assume that $\{e_1,\ldots,e_n\}$ is in ``head before tail'' order and that $\Phi_+(\cQ) = \{\beta_1,\ldots,\beta_r\}$ is in Reineke order. We have the identity
	\begin{equation}
		\label{eqn:EQ.factors}
			\EE(y_{e_1})\cdots\EE(y_{e_n}) = 
			\EE(y_{\beta_1}) \cdots \EE(y_{\beta_r}).
			\nonumber
	\end{equation}
which holds in the completed quantum algebra $\widehat{\A}_Q$. \qed
\end{prop}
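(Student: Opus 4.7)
The plan is to reduce the identity to the classical Reineke quantum dilogarithm identity for each individual Dynkin subquiver $Q^j$, and then to use the admissibility and ordering hypotheses on $\cQ$ to stitch these local statements into the global one. The starting observation is that the defining relation \eqref{eqn:qalg.relation} of $\widehat{\A}_Q$ immediately yields the $q$-commutation formula $y_{\gamma_1} y_{\gamma_2} = q^{\langle\gamma_1,\gamma_2\rangle} y_{\gamma_2} y_{\gamma_1}$, and so whenever $\langle e_i,e_{i'}\rangle = 0$ (which by \eqref{eqn:lambda.simples} means no arrows between $i$ and $i'$) the corresponding factors $\EE(y_{e_i})$ and $\EE(y_{e_{i'}})$ commute.

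First I would argue that the left-hand side of the displayed identity is independent of the particular head-before-tail order chosen on $Q_0$: any two such orderings are connected by a sequence of transpositions of adjacent vertices that are not joined by an arrow, and each such swap is invisible at the level of the product by the commutativity noted above. Then, using the ordered admissibility of $\cQ$, I would exhibit a specific head-before-tail order on $Q_0$ that lists all vertices of $Q^1$ first (in some head-before-tail order internal to $Q^1$), then all of $Q^2$, and so on through $Q^\ell$. This is possible because the ordered condition forces every arrow between distinct subquivers $Q^j$ and $Q^{j'}$ with $j<j'$ to have its head in $Q^j$ and its tail in $Q^{j'}$.

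The final ingredient is that the admissibility condition forces each $Q^j$ to be the \emph{full} subquiver of $Q$ on $Q^j_0$, since any arrow of $Q$ between two vertices of $Q^j_0$ that was omitted from $Q^j_1$ would become a loop upon contraction. Consequently the Euler form of $Q$ restricted to dimension vectors supported on $Q^j_0$ coincides with the Euler form of $Q^j$, and the subalgebra of $\widehat{\A}_Q$ generated by $\{y_{e_i}: i\in Q^j_0\}$ is canonically identified with $\widehat{\A}_{Q^j}$. Inside that subalgebra, Reineke's classical quantum dilogarithm identity \cite{mr2010} gives
\[
\prod_{i\in Q^j_0} \EE(y_{e_i}) = \prod_{k=1}^{r_j} \EE(y_{\beta^j_k})
\]
with the right side in a Reineke order for $Q^j$. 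Multiplying these equalities together for $j=1,\ldots,\ell$ in order reproduces precisely the right-hand side of the proposition in the Reineke order on $\Phi_+(\cQ)$ fixed in Section \ref{ss:quivers.subquiver.partitions}.

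The principal obstacle is the rearrangement step in the second paragraph: checking that the admissibility and ordering conditions on $\cQ$ really do permit a subquiver-compatible head-before-tail ordering, and that the resulting product is independent of the many auxiliary choices (the internal head-before-tail orders on each $Q^j$, the selected Reineke order within each $Q^j$, etc.). Once this combinatorial bookkeeping is under control, the remainder of the proof is an essentially formal application of the already-established Dynkin case subquiver-by-subquiver.
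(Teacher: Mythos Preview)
The paper does not prove this proposition at all: it is quoted verbatim from \cite{ja2018} and closed with a \qed. So there is no in-paper argument to compare against, only the hint (from the sentence introducing Proposition~\ref{prop:compute.codim}) that in \cite{ja2018} the identity is obtained by way of the explicit monomial computation recorded there.

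Your argument is correct and self-contained. The $q$-commutation relation you extract from \eqref{eqn:qalg.relation} is right, and with it the independence of the left-hand side from the particular head-before-tail order follows from the standard fact that any two linear extensions of a finite poset are connected by adjacent transpositions of incomparable elements. Your observation that admissibility forces each $Q^j$ to be a \emph{full} subquiver (else the contracted quiver acquires a loop) is exactly what is needed to guarantee both that a block-by-block head-before-tail order on $Q_0$ exists and that the subalgebra of $\widehat{\A}_Q$ generated by $\{y_{e_i}:i\in Q^j_0\}$ really is a copy of $\widehat{\A}_{Q^j}$; Reineke's Dynkin identity then finishes the job. Compared with the route suggested by the paper's remark about Proposition~\ref{prop:compute.codim}, your proof is more structural---it reduces to the known Dynkin case rather than expanding both sides and matching coefficients---and in particular it does not require the codimension formula \eqref{eqn:codim.eta.m} as input.
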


Furthermore, we will need the following important computation in $\widehat{\A}_Q$, which is established in \cite{ja2018} \emph{en route} to Proposition \ref{prop:EQ.factors}.

\begin{prop}[\cite{ja2018}, Proposition~5.1]
		\label{prop:compute.codim}
With the same hypotheses as Proposition \ref{prop:EQ.factors}, fix a dimension vector $\gamma \in D$ and let $\kpm $ be a $\cQ$-partition. Consider the product \[y_{\beta_1}^{m_1} y_{\beta_2}^{m_2} \cdots y_{\beta_r}^{m_r} \in \A_Q\;\subset \widehat{\A}_Q.\]
We have
\begin{equation}
\label{eqn:positives.to.simples}
y_{\beta_1}^{m_1} y_{\beta_2}^{m_2} \cdots y_{\beta_r}^{m_r} = (-1)^{s_\kpm} \cdot q^{w_\kpm} \cdot y_{e_1}^{\gamma(1)}\cdots y_{e_n}^{\gamma(n)}
\nonumber
\end{equation}
where
\begin{gather}
	s_\kpm = \sum_{u=1}^r m_u\left(\sum_{i\in Q_0} d^i_u -1\right), \nonumber \\
	w_\kpm = \codim_{\CC}\left(\eta_m;\Rep_\gamma(Q)\right) 
		+ \frac{1}{2}\sum_{i\in Q_0} \gamma(i)^2 
		- \frac{1}{2}\sum_{u=1}^r m_u^2, \nonumber
\end{gather}
and we recall that the integers $d^i_u$ are determined by $\beta_u = \sum_{i\in Q_0} d^i_u e_i$ for each $\beta_u \in \Phi_+(\cQ)$. We further have 
	\begin{equation}
		\label{eqn:codim.eta.m}
	\codim_{\CC}\left(\eta_\kpm;\Rep_\gamma(Q)\right) = 
		\sum_{j\in[\ell]}
			\codim_{\CC}\left(\Omega_{\kpm^j}(Q^j);\Rep_{\gamma^j}(Q^j)\right),
	\end{equation}
where Equation \eqref{eqn:codim.eta.m} is the content of \cite[Proposition~2.5]{ja2018}. \qed
\end{prop}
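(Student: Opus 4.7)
The plan is a direct calculation in $\widehat{\A}_Q$ using the defining relation $y_{\gamma_1 + \gamma_2} = -q^{-\langle\gamma_1,\gamma_2\rangle/2}\,y_{\gamma_1}\,y_{\gamma_2}$ in two stages. In the first stage I collapse $y_{\beta_1}^{m_1} \cdots y_{\beta_r}^{m_r}$ to a scalar multiple of $y_\gamma$. Since $\langle\beta_u,\beta_u\rangle = 0$, each block $y_{\beta_u}^{m_u}$ simplifies to $(-1)^{m_u-1}\,y_{m_u\beta_u}$ with no $q$-factor; merging the resulting blocks from left to right then contributes a total sign of $(-1)^{\sum_u m_u - 1}$ and the $q$-exponent $\frac{1}{2}\sum_{u<v} m_u m_v \langle\beta_u,\beta_v\rangle$. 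In the second stage I invert the relation to expand $y_\gamma = y_{\sum_i \gamma(i) e_i}$ in ``head before tail'' order: first splitting $y_\gamma$ into the factors $y_{\gamma(i)\,e_i}$, then each of these into $y_{e_i}^{\gamma(i)}$ (this latter step is sign-only since $\langle e_i,e_i\rangle = 0$). This produces a sign of $(-1)^{|\gamma|-1}$ and the $q$-exponent $-\frac{1}{2}\sum_{i<i'}\gamma(i)\gamma(i')\langle e_i,e_{i'}\rangle$.

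Combining the stages, the total sign is $(-1)^{\sum_u m_u + |\gamma|}$, and since $|\gamma| = \sum_u m_u \sum_i d^i_u$, this exponent differs from $s_\kpm = \sum_u m_u(\sum_i d^i_u - 1)$ by the even integer $2\sum_u m_u$, so the claimed sign matches. What remains is the identity
\[ E \;:=\; \tfrac{1}{2}\sum_{u<v} m_u m_v \langle\beta_u,\beta_v\rangle \;-\; \tfrac{1}{2}\sum_{i<i'}\gamma(i)\gamma(i')\langle e_i,e_{i'}\rangle \;=\; w_\kpm, \]
which I expect to be the main obstacle.

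To establish this identity I rewrite $\langle\alpha,\beta\rangle = \chi(\beta,\alpha) - \chi(\alpha,\beta)$ and split the sum over $u<v$ according to whether $\beta_u,\beta_v$ lie in the same Dynkin subquiver $Q^j\in\cQ$ or in distinct subquivers. For pairs within a single $Q^j$, use the classical identification $\codim_\CC(\Omega_{\kpm^j}(Q^j);\Rep_{\gamma^j}(Q^j)) = \dim\mathrm{Ext}^1(M_{\kpm^j},M_{\kpm^j})$ together with $\chi(\beta,\beta') = \dim\Hom(M_\beta,M_{\beta'}) - \dim\mathrm{Ext}^1(M_\beta,M_{\beta'})$ and the Reineke-order vanishing $\mathrm{Ext}^1(M_{\beta_v},M_{\beta_u})=0$ for $u<v$; assembling via \eqref{eqn:codim.eta.m} recovers the $\codim_\CC(\eta_\kpm;\Rep_\gamma(Q))$ term of $w_\kpm$. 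For pairs across distinct subquivers, the admissibility of $\cQ$ together with the head-before-tail ordering on the contracted quiver force $\langle\beta_u,\beta_v\rangle$ to reduce to a one-sided simple-root sum that cancels exactly against the $\langle e_i,e_{i'}\rangle$ term of $E$. Finally, the residual contribution $\frac{1}{2}\sum_i\gamma(i)^2 - \frac{1}{2}\sum_u m_u^2$ comes from the ``diagonal'' comparison between $\chi(\beta_u,\beta_u)=1$ (Dynkin rigidity) and $\chi(e_i,e_i)=1$. Summing these pieces yields $E = w_\kpm$ and completes the proof.
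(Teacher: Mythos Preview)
The paper does not actually supply a proof of this proposition; the statement is quoted from \cite{ja2018} and is closed with a \qed immediately after the display of \eqref{eqn:codim.eta.m}. So there is no argument in the present paper to compare against, and your write-up would in fact be \emph{adding} a proof where the paper has none.

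Your two-stage calculation in $\widehat{\A}_Q$ is correct and is the natural approach. The sign bookkeeping is right: collapsing to $y_\gamma$ and then re-expanding in the simples each contributes a sign whose product is $(-1)^{\sum_u m_u + |\gamma|}$, which agrees with $(-1)^{s_\kpm}$. For the $q$-exponent, the cleanest organization is to note that the head-before-tail convention forces $\chi(e_i,e_{i'})=0$ for $i<i'$, while the admissible/ordered hypothesis on $\cQ$ forces $\chi(\beta_u,\beta_v)=0$ whenever $u<v$ lie in distinct subquivers; after these two vanishing observations the identity $E=w_\kpm$ reduces to the per-subquiver equality
\[
\sum_{u'<v'} m^j_{u'}m^j_{v'}\,\chi(\beta^j_{u'},\beta^j_{v'}) \;=\; -\codim_\CC\!\left(\Omega_{\kpm^j}(Q^j);\Rep_{\gamma^j}(Q^j)\right)
\]
together with the diagonal terms $\chi(\beta_u,\beta_u)=1=\chi(e_i,e_i)$ that produce $\tfrac12\sum_i\gamma(i)^2-\tfrac12\sum_u m_u^2$. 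Two small points to tighten: your phrase about cross-subquiver terms ``cancelling'' against the $\langle e_i,e_{i'}\rangle$ sum is not quite what happens---rather, the one-sided Euler-form pieces $\chi(\beta_u,\beta_v)$ (for $u<v$, different subquivers) and $\chi(e_i,e_{i'})$ (for $i<i'$) each vanish outright. And for the per-subquiver identity you need not only the Reineke-order vanishing $\mathrm{Ext}^1(M_{\beta_{v'}},M_{\beta_{u'}})=0$ that you cite, but also the companion vanishing $\mathrm{Hom}(M_{\beta_{u'}},M_{\beta_{v'}})=0$ for $u'<v'$; both are part of the ``directed partition'' characterization of Reineke orders referenced in Section~\ref{ss:dynkins}.
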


\section{Structure theorems for CoHA}
	\label{s:structure}
	
Let $\cQ = \{Q^1,\ldots,Q^\ell\}$ be an admissible, ordered, Dynkin subquiver partition of $Q$, such that none of the subquivers $Q^j$ is an orientation of $E_8$. Recall that $\Phi_+(\cQ) = \Union_{j\in[\ell]}\Phi_+(Q^j)$ has a Reineke order \[\{\beta^1_1,\ldots,\beta^1_{r_1};\ldots;\beta^\ell_1,\ldots,\beta^\ell_{r_\ell}\}\] where $r_j = |\Phi_+(Q^j)|$. Further recall that in an abuse of notation, we let $r = \sum_{r\in[\ell]}r_j$ and also give the positive roots a second name, by writing the same Reineke order as $\{\beta_1,\ldots,\beta_r\}$. Moreover, recall that for each $u\in[r]$ (and uniquely determined $j$ and $k$), we have that \[\beta_u = \beta^j_{k} = \sum_{i\in Q^j_0} d^i_{u} e_i\] for some non-negative integers $d^i_{u}$. 

Now, for each $u \leftrightarrow (j,k)$ fix a vertex $i(u) = i(j,k) = i$, for which $d^i_{u} = 1$. We fix this choice of $i$ in the sequel, and when there is no confusion we will simply write $i$ instead of $i(u)$ or $i(j,k)$. Every positive root of every Dynkin root system admits such an $i$ except for the longest root of $E_8$, which explains our exclusion.

Now, we let $\cA_{\beta_u}$ denote the subalgebra of $\coha$ generated by the set, cf.~\cite[Section~2.8]{mkys2011} and \cite[Definition~11.1]{rr2013},
	\begin{equation}
		\label{eqn:bar.cA}
	\overline{\cA}_{\beta_u} = \left\{ f(\omega_{i,1}) : f\in\QQ[x]\right\} \subset \coha_{\beta_u}
	\nonumber
	\end{equation}

\begin{prop}
	\label{prop:cA.structure}
For each vertex $i\in Q_0$, we have $\overline{\cA}_{e_i} = \coha_{e_i}$. More generally, for every $\beta\in\Phi_+(\cQ)$ we have $\cA_\beta \iso \coha(A_1)$. That is, each $\cA_\beta$ is isomorphic to the CoHA of a quiver with a single vertex and no arrows described in Section \ref{ss:coha.A1}. 
\end{prop}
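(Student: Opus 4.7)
The first assertion is immediate: for $\gamma=e_i$, the space $\Rep_{e_i}$ is trivial (no arrow of the acyclic quiver $Q$ can contribute, since $e_i$ has a single nonzero coordinate) and $\GGLL_{e_i}=\GL_1$, so \eqref{eqn:H.gamma.polynomials} gives $\coha_{e_i}=\QQ[\omega_{i,1}]=\overline{\cA}_{e_i}$. For the second assertion, the plan is to exhibit an algebra isomorphism $\Phi:\coha(A_1)\to\cA_\beta$ sending the generator $\psi_k$ to $\omega_{i,1}^k\in\overline{\cA}_\beta\subset\coha_\beta$. Surjectivity is immediate from the definition of $\cA_\beta$ as the subalgebra generated by $\overline{\cA}_\beta$. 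Since $\coha(A_1)$ is the exterior algebra on $\{\psi_k\}_{k\geq 0}$ (Section \ref{ss:coha.A1}), well-definedness of $\Phi$ reduces to verifying the single relation $\omega_{i,1}^a * \omega_{i,1}^b + \omega_{i,1}^b * \omega_{i,1}^a = 0$ in $\coha_{2\beta}$.

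For this relation I would apply the multifactor formula of Proposition \ref{prop:multifactor.pushforward.mult}. Fix any admissible ordered Dynkin subquiver partition $\cQ$ containing the (unique) Dynkin subquiver in whose positive root system $\beta$ lies; the constant list $(\beta,\beta)$ is then consistent with $\cQ$, and
\[
\omega_{i,1}^a * \omega_{i,1}^b = \pi_*\bigl(e(\G(\cQ)) \cdot c_1(\F_{i,1})^a c_1(\F_{i,2})^b\bigr).
\]
Because $\beta(i)=1$, the bundles $\F_{i,1},\F_{i,2}$ are line bundles on $\FFll_{\beta,\beta}$, so at the vertex-$i$ factor the pushforward is the familiar $A_1$ Grassmannian integration over $\mathrm{Gr}(1,\CC^2)$, which is precisely the mechanism giving $\psi_a * \psi_b = -\psi_b * \psi_a$ in the Kontsevich--Soibelman $A_1$ example. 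Swapping $a\leftrightarrow b$ amounts to interchanging $\F_{i,1}\leftrightarrow\F_{i,2}$, and I would show that the non-$i$ portion of $e(\G(\cQ))$ is invariant under this transposition, so that the sign comes entirely from the vertex-$i$ integration.

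Injectivity of $\Phi$ then follows by mimicking \eqref{eqn:schur}: for a partition $\lambda=(\lambda_1\geq\cdots\geq\lambda_r\geq 0)$, Proposition \ref{prop:multifactor.pushforward.mult} realizes the product $\omega_{i,1}^{\lambda_r} * \omega_{i,1}^{\lambda_{r-1}+1} * \cdots * \omega_{i,1}^{\lambda_1+r-1}\in\coha_{r\beta}$ as a pushforward whose vertex-$i$ integration yields $s_\lambda(\omega_{i,1},\ldots,\omega_{i,r})$ times a nonzero universal factor coming from the remaining vertices; since the Schur polynomials form an additive basis of $\coha_r(A_1)$, $\Phi$ is injective on each graded piece. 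The main obstacle will be the bookkeeping needed to show the non-$i$ contribution to $e(\G(\cQ))$ is invariant under permutations of the $\F_{i,u}$, as $\G(\cQ)$ couples the vertex-$i$ factors with their neighbors. A cleaner alternative that sidesteps this obstacle is to appeal to Proposition \ref{prop:EQ.factors} together with \eqref{eqn:EE.is.PA1}, which equate the graded Poincar\'e series of $\cA_\beta$ and $\coha(A_1)$, so that any well-defined surjection $\Phi$ is automatically an isomorphism by dimension counting.
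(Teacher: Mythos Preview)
Your first assertion and the definition of $\Phi$ are fine, and the broad strategy---define $\Phi$, check it is well-defined, check it is bijective---is reasonable. But two of your three proposed mechanisms for the key technical step do not work as stated.

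\emph{The ``invariance'' argument.} You propose to show anticommutativity by arguing that the non-$i$ portion of $e(\G(\cQ))$ is invariant under swapping $\F_{i,1}\leftrightarrow\F_{i,2}$. There is no clean ``non-$i$ portion'': every arrow $a$ of the Dynkin subquiver incident to $i$ contributes a summand $\Hom(\F_{ta,1},\F_{ha,2})$ that mixes vertex $i$ with its neighbour, and this summand is \emph{not} invariant under swapping only the vertex-$i$ bundles. The actual mechanism is different. Using $\chi(\beta,\beta)=1$ and $\beta(i)=1$, one checks that $\mathrm{rank}\,\G(\cQ)=\sum_{i'\neq i}\beta(i')^2=\dim_\CC\prod_{i'\neq i}\Fl_{\beta(i'),\beta(i')}$, so integrating $e(\G(\cQ))$ over the non-$i$ factors of $\FFll_{\beta,\ldots,\beta}$ lands in cohomological degree zero and hence is a \emph{constant}. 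That constant is $1$: it equals $(1\in\coha_\beta)^{*r}$, which is the class of the open orbit $M_\beta^{\oplus r}$ in $\Rep_{r\beta}$ (here $\Ext^1(M_\beta,M_\beta)=0$ since $\beta$ is a real root of a Dynkin quiver). Thus there is no ``nonzero universal factor'' to track---the multifactor product is \emph{identically} the $A_1$ formula \eqref{eqn:schur}, exactly as the paper asserts. Once you have this, anticommutativity, well-definedness, and injectivity of $\Phi$ all follow at once, so there is no need to treat them separately.

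\emph{The Poincar\'e series alternative is circular.} Proposition~\ref{prop:EQ.factors} is a combinatorial identity in $\widehat{\A}_Q$; it says nothing about $\cA_\beta$ until one knows that $\EE(y_\beta)$ encodes the Poincar\'e series of $\cA_\beta$. In the paper that identification (Section~\ref{ss:surj}, just before \eqref{eqn:pos.rts.P.series}) is deduced \emph{from} Proposition~\ref{prop:cA.structure}, so you cannot invoke it here.
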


\begin{proof}
The isomorphism $\cA_\beta \iso \coha(A_1)$ is achieved by sending $\omega_{i,1}^p \in \coha_{\beta}$ to the element $\psi_p$ from Section \ref{ss:coha.A1}. For simple roots $\beta = e_i$, the claims follow by noticing that the numerator factors (parameterized by arrows) in the multifactor multiplication will all be $1$, and the only denominator factors (parameterized by vertices) which are not $1$ are those involving the vertex $i$. 

For general positive roots, the multifactor multiplication formula appears to involve variables $\omega_{i',j}$ for $i'\neq i$, but organization of the terms reveals that dependence on these variables cancels thanks to supersymmetry, and the result is again identical to \eqref{eqn:schur}. 
\end{proof}

One consequence of Proposition \ref{prop:cA.structure} is that the elements of $\cA_{\beta}$ are the polynomials in $\coha_{m\beta}$, for all $m \in \{0,1,2,\ldots\}$, which only depend on the variables $\{\omega_{i,b} :1\leq b\leq m\}$ at vertex $i\in Q_0$. 

\begin{remark}
The freedom in the choice of $i(u)$ is evident already in the seminal work of Kontsevich--Soibelman in the example of $A_2$. Explicitly, the algebra $\curly{H}^{(i)}$ for $i=1$ (respectively $i=2$) in \cite[Section~2.8, Proposition~1]{mkys2011} is our algebra $\cA_{e_1+e_2} \subset \coha$ for $i(u)=1$ (resp.~$i(u)=2$) with $\beta_u = e_1+e_2$.
\end{remark}

With these definitions and conventions, we have the following structure theorem for $\coha$. The proof will be given in Section \ref{s:main.thm.pf}.
	
\begin{thm}
	\label{thm:main}
Suppose that $\cQ=\{Q^1,\ldots,Q^\ell\}$ is an admissible, ordered, Dynkin subquiver partition of the acyclic quiver $Q$ such that none of the subquivers $Q^j$ is an orientation of $E_8$. Let the associated positive roots be written in a Reineke order, i.e., $\Phi_+(\cQ) = \{\beta_1,\ldots,\beta_r\}$. Then the $*$-multiplication induces an isomorphism
\begin{equation}
	\label{eqn:coha.structure}
\cA_{\beta_1} \tensor \cdots \tensor \cA_{\beta_r} \stackrel{\iso}{\longrightarrow} \coha
\end{equation}	
with the products taken in order from left to right.
\end{thm}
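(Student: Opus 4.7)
The plan is to show that the $D$-graded map $\Phi:\cA_{\beta_1}\tensor\cdots\tensor\cA_{\beta_r}\to\coha$ induced by left-to-right $*$-multiplication is an isomorphism by first matching bigraded Poincar\'e series via Proposition \ref{prop:EQ.factors}, and then deducing injectivity through the quiver stratification of $\Rep_\gamma$ developed in Section \ref{s:imp.vars}.

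Since $*$ preserves the $D$-grading, it suffices to fix $\gamma\in D$ and prove that the finite-dimensional map
\[
\Phi_\gamma:\Dirsum_{\kpm\kp\gamma}\Tensor_{u=1}^r \cA_{m_u\beta_u} \longrightarrow \coha_\gamma
\]
is a bigraded isomorphism, where the $\ZZ$-grading on the right is by cohomological degree. By Proposition \ref{prop:cA.structure}, each $\cA_{m_u\beta_u}$ is identified with a graded piece of $\coha(A_1)$, whose twisted $q$-Poincar\'e series in the convention of \eqref{eqn:EE.is.PA1} is the $y_{\beta_u}^{m_u}$-coefficient of $\EE(y_{\beta_u})$. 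The cohomological degree shift $-2\chi(m_u\beta_u,m_v\beta_v)$ generated by each pair of $*$-factors (Section \ref{ss:geom.coha.mult}) accounts precisely for the $q$-exponents that arise when one collects $y_{\beta_1}^{m_1}\cdots y_{\beta_r}^{m_r}$ via the quantum-algebra relation \eqref{eqn:qalg.relation}, as catalogued by $s_\kpm$ and $w_\kpm$ in Proposition \ref{prop:compute.codim}. Consequently, the total Poincar\'e series of the left-hand side assembles to $\EE(y_{\beta_1})\cdots\EE(y_{\beta_r})$ in $\widehat{\A}_Q$, whereas the right-hand side is governed by the Kontsevich--Soibelman description $\EE(y_{e_1})\cdots\EE(y_{e_n})$; these agree by Proposition \ref{prop:EQ.factors}, so $\Phi_\gamma$ is a map between bigraded vector spaces of equal finite graded dimensions.

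With dimensions matched, it suffices to prove injectivity of $\Phi_\gamma$. I plan to extract this from a PBW-style triangularity along the stratification $\Rep_\gamma=\sqcup_{\kpm\kp\gamma}\eta_\kpm$. By Proposition \ref{prop:multifactor.pushforward.mult}, the restriction of $\Phi_\gamma$ to the $\kpm$-summand factors through $\coho^\bullet_{\GGLL_\gamma}\bigl(\FFll_{m_1\beta_1,\ldots,m_r\beta_r}\bigr)$ by $a_1\tensor\cdots\tensor a_r \longmapsto \prod_u a_u(\F_{\bullet,u})$ followed by $\pi_*\bigl(e(\G(\cQ)) \cdot (-)\bigr)$. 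Invoking Proposition \ref{prop:consistency.subset.fund.class} to identify $e(\G(\cQ))$ with the class of the consistency subset, this pushforward is carried out along $\pi_\kpm$, which by Proposition \ref{prop:consist.subset.desing} is a birational desingularization of $\overline{\eta}_\kpm$. Thus the image of the $\kpm$-summand consists of classes supported on $\overline{\eta}_\kpm$. Refining the closure partial order on $\{\kpm\kp\gamma\}$ to a total order then renders $\Phi_\gamma$ block upper-triangular with respect to the associated filtration by stratum closures; on each diagonal block, the map is computed by pushing forward along $\pi_\kpm$ Schur-function insertions in the Chern roots of the tautological quotient bundles $\F_{\bullet,u}$. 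Since $\Sigma_{m_1\beta_1,\ldots,m_r\beta_r}(\cQ)$ is an iterated Grassmannian bundle, standard Schubert calculus identifies these pushforwards with linearly independent classes in the associated graded of $\coha_\gamma$, so each diagonal block is injective and hence, by the matched-dimension count, bijective.

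The main obstacle will be making the triangularity rigorous and verifying injectivity on the diagonal: the strata closures $\overline{\eta}_\kpm$ are singular and fit together in combinatorially intricate ways, so ``support modulo smaller strata'' must be implemented carefully --- for instance, via equivariant localization at torus fixed points (in the spirit of the proof of Proposition \ref{prop:consistency.subset.fund.class}), or via a filtration of $\coha_\gamma$ by ideals of equivariant classes vanishing on prescribed stratum closures. Once that framework is in place, each diagonal block reduces to familiar Gysin-formula computations on flag bundles, and the matched Poincar\'e series promote the injective triangular $\Phi_\gamma$ to the sought isomorphism.
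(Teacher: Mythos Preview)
Your overall two-step strategy---match Poincar\'e series via Proposition~\ref{prop:EQ.factors}, then prove injectivity---is exactly the paper's. Where you and the paper diverge is in how injectivity is executed and how the degree-shift bookkeeping for the Poincar\'e comparison is justified.

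For injectivity, the paper does not build a support filtration on $\coha_\gamma$ or appeal to Schubert calculus on the diagonal blocks. Instead it resolves the ``main obstacle'' you name by composing each $\mu_\kpm$ with the Feh\'er--Rim\'anyi restriction $\iota^*_\kpm:\coha_\gamma\to\coho^\bullet(BG_\kpm)$ to a normal slice $\nu_\kpm\subset\eta_\kpm$. A localization argument---there is a \emph{unique} torus fixed point of the consistency subset $\Sigma_{m_1\beta_1,\ldots,m_r\beta_r}(\cQ)$ lying over $\nu_\kpm$---shows that $\iota^*_\kpm\circ\mu_\kpm$ sends $f_1\otimes\cdots\otimes f_r$ to $f_1(t_{1,\bullet})\cdots f_r(t_{r,\bullet})\cdot\varepsilon_\kpm$ in the integral domain $\coho^\bullet(BG_\kpm)$ (Proposition~\ref{prop:graded.mult.injective}). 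That is a product of nonzero elements, so each $\mu_\kpm$ is injective. This is precisely what your ``diagonal block'' would have to become; the paper's route bypasses the ambient filtration machinery. Your triangularity observation---that the image of $\mu_\kpm$ is supported on $\overline{\eta}_\kpm$, hence killed by $\iota^*_{\kpm'}$ whenever $\eta_{\kpm'}\not\subset\overline{\eta}_\kpm$---is correct and is arguably what underlies the passage from ``each $\mu_\kpm$ injective'' to ``$\Phi_\gamma$ injective'', a step the paper records tersely in Lemma~\ref{lem:inj}.

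For the Poincar\'e-series side, your claim that the shifts $-\chi(m_u\beta_u,m_v\beta_v)$ account ``precisely'' for the quantum-algebra exponents is the right endpoint but not immediate: the relation~\eqref{eqn:qalg.relation} involves the antisymmetrization $\langle\cdot,\cdot\rangle$ rather than $\chi$, and the matching only goes through after unwinding the twist $(-1)^{s_\kpm}q^{w_\kpm}$ of Proposition~\ref{prop:compute.codim}. The paper sidesteps this bookkeeping by reading the degree shift off \emph{geometrically}: the same explicit formula above shows that $\mu_\kpm$ raises cohomological degree by exactly $\deg\varepsilon_\kpm=\codim(\eta_\kpm;\Rep_\gamma)$ (Lemma~\ref{lem:graded.mult.dim.shift}), and this codimension is precisely the quantity sitting inside $w_\kpm$. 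Thus the single normal-locus computation does double duty, feeding both injectivity and the dimension count.
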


We first comment that when $Q$ is a Dynkin quiver, our result interpolates between the two isomorphisms established by Rim\'anyi \cite[Theorem~11.3]{rr2013}, which we restate below.

\begin{thm}[Rim\'anyi, \cite{rr2013}]
Let $Q$ be a Dynkin quiver (but not an orientation of $E_8$), let $e_1,\ldots,e_n$ denote its simple roots in ``head before tail'' order, and let $\beta_1,\ldots,\beta_N$ denote its positive roots in Reineke order. Then the $*$-multiplication induces isomorphisms
\begin{equation}
	\label{eqn:st.bas.iso}
	\cA_{e_1} \tensor \cdots \tensor \cA_{e_n} \stackrel{\iso}{\longrightarrow} \coha
\end{equation}
\begin{equation}
	\label{eqn:pos.rts.iso}
	\cA_{\beta_1} \tensor \cdots \tensor \cA_{\beta_N} \stackrel{\iso}{\longrightarrow} \coha.
\end{equation}
The isomorphism \eqref{eqn:st.bas.iso} still holds when $Q$ is an orientation of $E_8$. \qed
\end{thm}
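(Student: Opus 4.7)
The plan is the usual two-part strategy for such decomposition results: first match Poincar\'e series of the two sides in every bidegree, then prove injectivity of the multiplication map.

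Set up the $D$-graded map $\Phi \colon \cA_{\beta_1} \tensor \cdots \tensor \cA_{\beta_r} \to \coha(Q)$ by iterated left-to-right $*$-multiplication. By Proposition~\ref{prop:cA.structure} each $\cA_{\beta_u}$ is isomorphic to $\coha(A_1)$, so the $(m_1,\ldots,m_r)$-piece of the domain has a $\QQ$-basis of Schur polynomial tuples $(s_{\lambda^1},\ldots,s_{\lambda^r})$ with $s_{\lambda^u} \in \QQ[\omega_{i(u),1},\ldots,\omega_{i(u),m_u}]^{\Sym_{m_u}}$, and the cohomological degree shift formula from Section~\ref{ss:geom.coha.mult} determines how $\Phi$ interacts with the cohomological grading. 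To compare Poincar\'e series I would combine the $\EE$-series identity of Proposition~\ref{prop:EQ.factors} with the codimension computation of Proposition~\ref{prop:compute.codim} and the interpretation of $\EE(z)$ as a $q$-twisted Poincar\'e series of $\coha(A_1)$ from~\eqref{eqn:EE.is.PA1}. Unpacking the relation~\eqref{eqn:qalg.relation} in the $y_\gamma$-coordinates yields precisely the Poincar\'e series of $\coha_\gamma(Q) \iso \Tensor_i \QQ[\omega_{i,j}]^{\Sym_{\gamma(i)}}$, matching the domain bidegree by bidegree.

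For injectivity I would use Proposition~\ref{prop:multifactor.pushforward.mult} to express
\[
\Phi\bigl(s_{\lambda^1} \tensor \cdots \tensor s_{\lambda^r}\bigr) = \pi_*\Bigl(e(\G(\cQ)) \cdot \prod_u s_{\lambda^u}(\F_{i(u),u})\Bigr) \in \coha_\gamma(Q).
\]
For the trivial tuples $(\lambda^u) = (\emptyset,\ldots,\emptyset)$, Proposition~\ref{prop:eta.m.prod.of.1s} identifies the image with the fundamental class $[\overline{\eta}_\kpm]$ of the stratum from Definition~\ref{defn:m.stratum}, and the strata $\eta_\kpm$ form a stratification of $\Rep_\gamma$ indexed by the $\cQ$-partitions of $\gamma$; this yields linear independence of the ``constant'' basis elements. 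For general tuples, the Schur factors pull back to products of Chern classes on the fibers of $\pi_\kpm$, and I would establish linear independence by a leading monomial analysis in the polynomial presentation~\eqref{eqn:H.gamma.polynomials}, or, geometrically, by localization at torus fixed points of $\FFll_{m_1\beta_1,\ldots,m_r\beta_r}$ using the desingularization result of Proposition~\ref{prop:consist.subset.desing}. Since all bidegree pieces are finite-dimensional and the Poincar\'e series already agree, injectivity promotes to the required isomorphism.

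The main obstacle is this last step. The Reineke order on $\Phi_+(\cQ)$ together with Lemma~\ref{lem:need.arrow.subset} ensures that the cross-block numerator factors in the multiplication formula~\eqref{eqn:coha.two.factor} are trivial, so that interactions across different Dynkin blocks are controlled; what remains is to port Rim\'anyi's linear independence argument \cite[Theorem~11.3]{rr2013} from the single-block Dynkin case $\cQ = \{Q\}$ to the multi-block setting, which requires tracking how the within-block Schur factors combine with the fundamental class data $\kpm$ to produce distinguishable leading terms. The exclusion of $E_8$ enters precisely through Proposition~\ref{prop:cA.structure}: each $\cA_{\beta_u}$ is constructed using a vertex $i(u)$ with $d^{i(u)}_u = 1$, which fails exactly for the longest positive root of $E_8$.
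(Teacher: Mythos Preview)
This theorem is not proven in the present paper: it is attributed to Rim\'anyi \cite{rr2013} and closed with a \qed, indicating a citation without reproof. What the paper \emph{does} prove is the generalization Theorem~\ref{thm:main}, of which the Rim\'anyi theorem is the pair of extremal special cases ($\cQ=\{Q\}$ for \eqref{eqn:pos.rts.iso} and $\cQ$ consisting of the singleton vertices for \eqref{eqn:st.bas.iso}). Your sketch is in fact a sketch of the proof of Theorem~\ref{thm:main}, and it matches the paper's overall architecture in Section~\ref{s:main.thm.pf}: injectivity first, then equality of Poincar\'e series via Propositions~\ref{prop:EQ.factors} and~\ref{prop:compute.codim}, which together with finite-dimensionality of each bigraded piece forces the isomorphism.

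The place where your sketch is genuinely weaker than the paper is the injectivity argument. You offer two alternatives---a ``leading monomial analysis'' in the polynomial presentation, or torus localization on $\FFll_{m_1\beta_1,\ldots,m_r\beta_r}$---but neither is made precise, and the first in particular is not how the paper proceeds. The paper's mechanism (Proposition~\ref{prop:graded.mult.injective}) is to compose $\mu_\kpm$ with the restriction $\iota_\kpm^*$ to the stratum $\eta_\kpm$ and show that the image of $f_1\tensor\cdots\tensor f_r$ is exactly $f_1(t_{1,\bullet})\cdots f_r(t_{r,\bullet})\cdot\varepsilon_\kpm$, a product of nonzero elements in an integral domain. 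This \emph{is} a localization argument, but its content is the observation that there is a \emph{unique} torus fixed point of the consistency subset $\Sigma_{m_1\beta_1,\ldots,m_r\beta_r}(\cQ)$ lying over the normal locus $\nu_\kpm$, so that after restriction only one term of the localization sum survives; this is the step your sketch does not supply. Your alternative route via linear independence of the classes $[\overline{\eta}_\kpm]$ across $\cQ$-partitions $\kpm\kp\gamma$, and then arguing that Schur decorations over different $\kpm$ cannot collide, would require separately establishing that linear independence in $\coha_\gamma$ and controlling cross-$\kpm$ interactions---work the paper avoids entirely by passing through $\iota_\kpm^*$ one $\kpm$ at a time.
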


In particular, when $Q$ is Dynkin (but not an orientation of $E_8$) and $\cQ = \{Q\}$ we have that our isomorphism \eqref{eqn:coha.structure} coincides with \eqref{eqn:pos.rts.iso}. On the other extreme, when $\cQ$ consists of $n$ subquivers, each of which is a single vertex of the Dynkin quiver $Q$, we have that our isomorphism \eqref{eqn:coha.structure} coincides with \eqref{eqn:st.bas.iso}.

We observe that even for general acyclic $Q$ (not necessarily Dynkin), the fact that \eqref{eqn:st.bas.iso} is an isomorphism follows immediately from the definitions. 

\begin{lem}
	\label{lem:simples.iso}
When $\cQ$ consists of the subquivers $Q^i_0 = \{i\}$ for $i\in [n]$, ordered in ``head before tail'' ordering, then \eqref{eqn:coha.structure} is an isomorphism.
\end{lem}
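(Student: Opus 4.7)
The plan is to reduce this to an elementary check that the $*$-product of classes supported at distinct single vertices, ordered head-before-tail, coincides with ordinary polynomial multiplication. Once that is established, both the domain and the codomain of \eqref{eqn:coha.structure}, restricted to the graded piece of a fixed dimension vector $\gamma$, are canonically the tensor product $\Tensor_{i\in Q_0}\QQ[\omega_{i,1},\ldots,\omega_{i,\gamma(i)}]^{\Sym_{\gamma(i)}}$, and the induced map is the identity on this tensor product.

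First I would use Proposition~\ref{prop:cA.structure} to identify $\cA_{e_i}$ with $\coha(A_1)$, noting that under this identification the $me_i$-graded piece of $\cA_{e_i}$ corresponds to $\coha_m(A_1)\iso \QQ[\omega_{i,1},\ldots,\omega_{i,m}]^{\Sym_m}$. By \eqref{eqn:H.gamma.polynomials} this is exactly $\coha_{me_i}$, so $\cA_{e_i}=\Dirsum_{m\geq 0}\coha_{me_i}$ as a subspace of $\coha$, and the $\gamma$-graded piece of the domain of \eqref{eqn:coha.structure} tensor-factors accordingly. The target $\coha_\gamma$ also has the decomposition \eqref{eqn:H.gamma.polynomials}, so the domain and codomain agree as $D$-graded vector spaces before any multiplication is considered.

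Next I would apply the two-factor formula \eqref{eqn:coha.two.factor} to a pair $f\in\coha_{\gamma_1}$, $g\in\coha_{\gamma_2}$ with $\gamma_1=\gamma(j)e_j$ and $\gamma_2=\gamma(j')e_{j'}$ for distinct vertices $j<j'$. Because the supports are disjoint, there is a single admissible choice of subsets $S_\bullet$, and at every vertex one of $S_i$ or $\overline{S_i}$ is empty, forcing the denominator to equal $1$ via the convention \eqref{eqn:prod.set.subscripts}. For the numerator, a nontrivial arrow factor would require $ta=j$ and $ha=j'$, but the head-before-tail ordering gives $ha<ta$, contradicting $j<j'$; this is the singleton-partition instance of Lemma~\ref{lem:need.arrow.subset}. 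Hence $f*g=fg$, and by associativity any iterated $*$-product $f_1*\cdots*f_n$ with $f_i\in\coha_{\gamma(i)e_i}$ is the ordinary polynomial product $f_1\cdots f_n$.

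Combining these observations, the map \eqref{eqn:coha.structure} restricted to the $\gamma$-graded piece is the identity on $\Tensor_{i\in Q_0}\QQ[\omega_{i,1},\ldots,\omega_{i,\gamma(i)}]^{\Sym_{\gamma(i)}}$ modulo the identification \eqref{eqn:H.gamma.polynomials}, and hence an isomorphism. Since this holds for every $\gamma\in D$, the lemma follows. The only step that does any real work is the collapse of the numerator in \eqref{eqn:coha.two.factor}, but this is a direct consequence of the head-before-tail assumption rather than a genuine obstacle.
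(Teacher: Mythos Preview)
Your proposal is correct and takes essentially the same approach as the paper: both arguments reduce to the verification that $f_1 * \cdots * f_n = \prod_{i\in Q_0} f_i(\omega_{i,1},\ldots,\omega_{i,\gamma(i)})$, from which the isomorphism is immediate via \eqref{eqn:H.gamma.polynomials}. The paper compresses this into a single line by citing the multifactor formula (or \eqref{eqn:multi.mult.as.pi}) directly, whereas you unpack the two-factor formula explicitly; note that your inductive step via associativity really needs the disjoint-support version of the computation (left factor supported on $\{1,\ldots,k\}$, right factor on $\{k+1\}$) rather than just the single-vertex-vs-single-vertex case, but your appeal to Lemma~\ref{lem:need.arrow.subset} already covers this.
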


\begin{proof}
For $f_i(\omega_{i,1},\ldots,\omega_{i,\gamma(i)}) \in \cA_{e_i}$ we have, say from the multifactor version of \eqref{eqn:coha.two.factor} or from \eqref{eqn:multi.mult.as.pi}, that
	\begin{equation}
		\label{eqn:st.basis.iso.proof}
		f_1 * \cdots * f_n = \prod_{i\in Q_0} f_i(\omega_{i,1},\ldots,\omega_{i,\gamma(i)}) \in \coha_\gamma
	\end{equation} 
from whence the result follows.
\end{proof}

Furthermore, our main theorem can be restated as follows. 

\begin{cor}
	\label{cor:dynkin.decomp}
With the same hypotheses as Theorem \ref{thm:main}, the mapping induced by the multifactor $*$-multiplication
	\begin{equation}
	\coha(Q^1) \tensor \cdots \tensor \coha(Q^\ell) \stackrel{\iso}{\longrightarrow} \coha(Q)
	\nonumber
	\end{equation}
is an isomorphism. That is, $\coha = \coha(Q)$ can be decomposed into tensor factors, each isomorphic to the CoHA of a Dynkin quiver. 
\end{cor}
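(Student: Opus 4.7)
\textbf{Proof plan for Corollary \ref{cor:dynkin.decomp}.}

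The plan is to deduce the corollary from Theorem \ref{thm:main} together with Rim\'anyi's theorem applied to each Dynkin subquiver, using associativity of the $*$-product to regroup tensor factors block by block.

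First I would fix a Reineke order on $\Phi_+(\cQ)$ compatible with the block structure, so that the positive roots of $Q^1$ come first (in a Reineke order for $Q^1$), then those of $Q^2$, and so on through $Q^\ell$. By Theorem \ref{thm:main}, the left-to-right $*$-multiplication induces an isomorphism
\[
\cA_{\beta^1_1} \tensor \cdots \tensor \cA_{\beta^1_{r_1}} \tensor \cdots \tensor \cA_{\beta^\ell_{1}} \tensor \cdots \tensor \cA_{\beta^\ell_{r_\ell}} \stackrel{\iso}{\longrightarrow} \coha(Q).
\]

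Next I would identify each Dynkin subquiver's CoHA as a graded subspace of $\coha(Q)$. Define $\iota_j : \coha(Q^j) \hookrightarrow \coha(Q)$ to be the vector-space inclusion sending $\coha_{\gamma^j}(Q^j)$ to $\coha_{\gamma^j}(Q)$, where $\gamma^j$ is viewed as a dimension vector of $Q$ supported on $Q^j_0$. The key claim is that $\iota_j$ is a $*$-algebra map. To see this, observe that for $f,f'\in\coha(Q^j)$ the formula \eqref{eqn:coha.two.factor} applied in $\coha(Q)$ has numerator factors indexed by arrows $a\in Q_1$; whenever $a$ satisfies $ha\notin Q^j_0$ or $ta\notin Q^j_0$, the support hypothesis forces $\overline{S_{ha}}$ or $S_{ta}$ to be empty, so the corresponding factor is $1$ by the convention \eqref{eqn:prod.set.subscripts}. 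The surviving arrows are precisely those in $Q^j_1$, and one recovers the multiplication formula for $\coha(Q^j)$. Under $\iota_j$, each subalgebra $\cA_{\beta^j_k}(Q^j) \subset \coha(Q^j)$ is carried isomorphically onto $\cA_{\beta^j_k} \subset \coha(Q)$.

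Then I would invoke Rim\'anyi's theorem \eqref{eqn:pos.rts.iso} for each Dynkin quiver $Q^j$ (which is not an orientation of $E_8$ by hypothesis on $\cQ$, so Rim\'anyi's result applies): left-to-right $*$-multiplication yields
\[
\cA_{\beta^j_1}(Q^j) \tensor \cdots \tensor \cA_{\beta^j_{r_j}}(Q^j) \stackrel{\iso}{\longrightarrow} \coha(Q^j).
\]
Tensoring over all $j\in[\ell]$ and composing with the block-identifications from the previous step gives an isomorphism from $\cA_{\beta^1_1} \tensor \cdots \tensor \cA_{\beta^\ell_{r_\ell}}$ onto $\coha(Q^1) \tensor \cdots \tensor \coha(Q^\ell)$. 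Associativity of the $*$-product in $\coha(Q)$, combined with the fact that each $\iota_j$ respects multiplication, guarantees that the diagram formed by this isomorphism, the Theorem \ref{thm:main} isomorphism above, and the multifactor multiplication $\coha(Q^1) \tensor \cdots \tensor \coha(Q^\ell) \to \coha(Q)$ commutes. The corollary follows.

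The sole technical point is the verification that $\iota_j$ is a $*$-algebra map; this is a direct support analysis of formula \eqref{eqn:coha.two.factor} in the same spirit as Lemma \ref{lem:need.arrow.subset}, and presents no genuine obstacle once the admissibility and ordering conventions on $\cQ$ are in hand.
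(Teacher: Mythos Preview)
Your proposal is correct and follows essentially the same route as the paper: invoke Theorem \ref{thm:main}, use associativity to regroup the tensor factors block by block according to the subquivers, and then apply Rim\'anyi's isomorphism \eqref{eqn:pos.rts.iso} (equivalently, the special case $\cQ=\{Q^j\}$ of Theorem \ref{thm:main}) to each Dynkin subquiver. The paper's proof is a two-sentence version of this; your added verification that $\iota_j$ is a $*$-algebra map (via the support analysis of \eqref{eqn:coha.two.factor}, with admissibility ruling out arrows of $Q_1\setminus Q^j_1$ having both endpoints in $Q^j_0$) simply spells out what the paper leaves implicit.
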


\begin{proof}
Assuming the truth of \eqref{eqn:coha.structure}, the associativity of the $*$-multiplication and our convention on ordering allows us to group the factors of \eqref{eqn:coha.structure} according to subquivers. Since each $Q^j$ is Dynkin, the isomorphism of \eqref{eqn:pos.rts.iso}---which we recall is a special case of \eqref{eqn:coha.structure}---implies the result.
\end{proof}

It was established in \cite[Theorem~10.1]{rr2013} that the quiver polynomials are important structure constants in the CoHA of Dynkin quivers. On the one hand, our Proposition \ref{prop:eta.m.prod.of.1s} generalizes this property to the quiver stratum $\eta_\kpm$. On the other hand, the following result further relates the fundamental class of $\overline{\eta}_\kpm$ directly to Dynkin quiver polynomials. We note that the quiver polynomials have a rich history in their own right, and exhibit interesting geometric and combinatorial properties; see e.g., \cite{ab2008,rkakjr2019} and references therein.

\begin{cor}
	\label{cor:dynkin.orbit.mult}
For every $\cQ$-partition $\kpm$, performing multiplications of quiver polynomials according to the isomorphism of Corollary \ref{cor:dynkin.decomp} gives 
	\[
	\left( \left[ \overline{\Omega_{\kpm^1}(Q^1)} \right] \in \coha_{\gamma^1}(Q^1)\right)
		* \cdots * 
		\left( \left[ \overline{\Omega_{\kpm^\ell}(Q^\ell)} \right] \in \coha_{\gamma^\ell}(Q^\ell) \right)
			= \left[ \overline{\eta}_\kpm \right] \in \coha_\gamma(Q).
	\]
\end{cor}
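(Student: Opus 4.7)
The plan is to assemble the stated identity from two applications of Proposition \ref{prop:eta.m.prod.of.1s} combined with associativity of the $*$-multiplication. First, I apply Proposition \ref{prop:eta.m.prod.of.1s} \emph{internally} to each Dynkin subquiver $Q^j$, taking the trivial subquiver partition $\{Q^j\}$ of $Q^j$, for which the associated quiver stratum coincides with the honest quiver orbit $\Omega_{\kpm^j}(Q^j)$. This yields, for every $j \in [\ell]$, the identity
\[ [\overline{\Omega_{\kpm^j}(Q^j)}] = (1 \in \coha_{m^j_1 \beta^j_1}(Q^j)) * \cdots * (1 \in \coha_{m^j_{r_j} \beta^j_{r_j}}(Q^j)) \]
inside $\coha_{\gamma^j}(Q^j)$, with the positive roots of $Q^j$ written in their Reineke order.

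Next I transport this formula into $\coha(Q)$ via the embedding $\coha(Q^j) \hookrightarrow \coha(Q)$ implicit in Corollary \ref{cor:dynkin.decomp}. What must be verified is that the $*$-product in $\coha(Q)$ of two classes whose dimension vectors are supported in $Q^j$ agrees with the $*$-product computed in $\coha(Q^j)$. Inspecting \eqref{eqn:coha.two.factor}, the only potential discrepancy comes from arrows $a \in Q_1 \setminus Q^j_1$ and vertices $i \in Q_0 \setminus Q^j_0$. For any such arrow either $ha \notin Q^j_0$ (forcing $\gamma_1(ha) = \gamma_2(ha) = 0$ and hence $\overline{S_{ha}} = \emptyset$) or $ta \notin Q^j_0$ (forcing $S_{ta} = \emptyset$), so the numerator factor associated to $a$ is trivially $1$ by convention \eqref{eqn:prod.set.subscripts}; analogously the denominator factors at vertices outside $Q^j_0$ are trivial. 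This is essentially the same case analysis that proves Lemma \ref{lem:need.arrow.subset}. Consequently the displayed identity above holds verbatim in $\coha_{\gamma^j}(Q)$, with each factor $1$ now regarded as an element of $\coha_{m^j_k \beta^j_k}(Q)$.

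Applying associativity of $*$ in $\coha(Q)$ together with the Reineke-order enumeration $\Phi_+(\cQ) = \{\beta_1 \prec \cdots \prec \beta_r\}$, which by construction simply concatenates the Reineke orderings within each $\Phi_+(Q^j)$ in the order prescribed by $\cQ$, I obtain
\[ [\overline{\Omega_{\kpm^1}(Q^1)}] * \cdots * [\overline{\Omega_{\kpm^\ell}(Q^\ell)}] = (1 \in \coha_{m_1 \beta_1}(Q)) * \cdots * (1 \in \coha_{m_r \beta_r}(Q)). \]
A final application of Proposition \ref{prop:eta.m.prod.of.1s}, this time to $Q$ itself with subquiver partition $\cQ$ and $\cQ$-partition $\kpm$, identifies the right-hand side with $[\overline{\eta}_\kpm] \in \coha_\gamma(Q)$, completing the argument.

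The only non-bookkeeping step is the compatibility check in the second paragraph, and even that reduces to invoking the ``vanishing of irrelevant factors'' phenomenon already packaged in Lemma \ref{lem:need.arrow.subset}; I therefore expect no genuine obstacle beyond carefully matching notation and ordering conventions between the Dynkin and global settings.
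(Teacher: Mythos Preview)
Your proof is correct and follows essentially the same route as the paper's own argument: apply Proposition~\ref{prop:eta.m.prod.of.1s} within each $Q^j$ to express $[\overline{\Omega_{\kpm^j}(Q^j)}]$ as a product of $1$'s, concatenate using associativity and the Reineke ordering on $\Phi_+(\cQ)$, then apply Proposition~\ref{prop:eta.m.prod.of.1s} once more globally. The paper's proof is terser and simply cites Corollary~\ref{cor:dynkin.decomp} for the compatibility of the two multiplications, whereas you spell out (via the vanishing-factor analysis of Lemma~\ref{lem:need.arrow.subset}) why the $*$-product in $\coha(Q^j)$ agrees with that in $\coha(Q)$; this extra justification is welcome, and note that the dichotomy ``$ha\notin Q^j_0$ or $ta\notin Q^j_0$'' for $a\in Q_1\setminus Q^j_1$ ultimately relies on the \emph{admissibility} of $\cQ$ (otherwise an arrow between two vertices of $Q^j_0$ but outside $Q^j_1$ would create a loop in the contracted quiver).
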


\begin{proof}
By Proposition \ref{prop:eta.m.prod.of.1s} (or by \cite[Theorem~10.1]{rr2013}), we see that for each $j\in[\ell]$ we have
	\[ 
	\left( 1\in \coha_{m^j_1 \beta^j_1} \right) * \cdots *
		\left( 1 \in \coha_{m^j_{r_j} \beta^j_{r_j}} \right) = 
		\left[ \overline{\Omega_{\kpm^j}(Q^j)} \right].
	\]
By virtue of our ordering on the roots $\Phi_+(\cQ)$, the result follows by combining the above with Corollary \ref{cor:dynkin.decomp} and Proposition \ref{prop:eta.m.prod.of.1s}.
\end{proof}

\begin{remark}
Given that the result of Proposition \ref{prop:multifactor.pushforward.mult} does not depend on the subquivers $Q^j$ being Dynkin, it seems plausible that one can drop the Dynkin assumption from the subquivers $Q^j$ in Corollary \ref{cor:dynkin.decomp}. Unfortunately, in so doing, one loses the geometric connection to quiver polynomials given by Corollary \ref{cor:dynkin.orbit.mult}.
\end{remark}

\section{Proof of the main theorem}
	\label{s:main.thm.pf}

We have already shown that \eqref{eqn:coha.structure} is an isomorphism in the case that $\cQ$ consists only of subquivers with a single vertex, i.e., Lemma \ref{lem:simples.iso}. This section is dedicated to the proof that \eqref{eqn:coha.structure} is an isomorphism for all other choices of admissible, ordered, Dynkin subquiver partitions $\cQ$ (for which each subquiver $Q^j$ is not an orientation of $E_8$).

\subsection{Equivariant geometry of Dynkin quivers}
	\label{ss:eq.geom.dynkin}

We first consider an aside into the equivariant geometry of Dynkin quiver orbits. In particular, we recall an amalgam of several important results and constructions from \cite{lfrr2002.duke}, \cite{rr2013}, and \cite{ja2018}. To avoid confusion with our fixed acyclic quiver $Q$, for the moment we will let $R$ denote a fixed Dynkin quiver. 

Given any dimension vector $\gamma$ for $R$, and a Kostant partition $\kpm \kp \gamma$, recall we have the associated orbit $\Omega_\kpm(R) \subset \Rep_\gamma(R)$. Assume $R_0 = [n]$ and write $\Phi_+(R) = \{\beta_1,\ldots,\beta_r\}$ with $\beta_u = \sum_{i\in R_0} d^i_u \, e_i$. Observe that $\#\Phi_+(R)=r$ and so we can write $\kpm = (m_1,\ldots,m_r)$. 

Following Rim\'anyi (see the proof of \cite[Lemma~11.4]{rr2013}), we define a system of sets of non-negative integers $Y_{i,u,v}$ with $i\in[n]$, $u\in[r]$, and $v\in[m_u]$ (where if $m_u = 0$, then $Y_{i,u,v} = \emptyset$) as follows. We require that $\# Y_{i,u,v} = d^i_u$ and furthermore that for each $i\in Q_0$ the \emph{disjoint} union
	\[
	Y_{i,1,1}\union \cdots \union Y_{i,1,m_1} \union Y_{i,2,1} \union \cdots \union Y_{i,2,m_2} 
		\union \; \cdots \; \union Y_{i,r,1} \union \cdots \union Y_{i,r,m_r}
	\]
is equal to $\{1,\ldots,\gamma(i)\}$ \emph{in this order} when the elements are read off from left to right; i.e., we have \[Y_{i,u,v} = \{(v-1)d^i_u + 1, \ldots, v\,d^i_u\}.\] 

This system of sets determines a distinguished point $X_\kpm \in \Omega_\kpm(R) \subset \Rep_\gamma(R)$ as follows. Using the categorical equivalence between modules over the path algebra $\CC R$ and quiver representations, we recall that the indecomposable $\CC R$-modules are in one-to-one correspondence with positive roots and that the Krull--Schmidt theorem for path algebras implies that each quiver representation can be written uniquely as a direct sum of indecomposables, see e.g., \cite[Chapter~1]{hdjw2017}. Let $M_{\beta}$ denote the indecomposable representation corresponding to the positive root $\beta$.
	
Let $b_{i,1},\ldots,b_{i,\gamma(i)}$ be the standard basis of $\CC^{\gamma(i)}$. For each $u\in[r]$ and $v\in [m_u]$, let $M_{u,v}$ be an indecomposable $\CC R$-module isomorphic with $M_{\beta_u}$ and determined using the vectors $b_{i,j}$ for $j\in Y_{i,u,v}$. Set $X_\kpm$ to be the quiver representation corresponding to the $\CC R$ module $\Dirsum_{u\in[r]} \Dirsum_{v\in[m_u]} M_{u,v}$.

\begin{example}
	\label{ex:Dynkin.Y.sets}
We let $R$ be the equioriented $A_3$ quiver $1 \leftarrow 2 \leftarrow 3$ of previous examples. We take the same Reineke order on the positive roots from Examples \ref{ex:Reineke.order.A3} and \ref{ex:Orbit.A3}, and the same Kostant partition as \ref{ex:Orbit.A3}. That is, we take
	\begin{align*}
	\beta_1 & = e_3 & \beta_2 & = e_2+e_3 & \beta_3 & = e_2 \\
	\beta_4 & = e_1 + e_2 + e_3 & \beta_5 &= e_1 + e_2 & \beta_6 &= e_1
	\end{align*}
and
	\begin{align*}
	m_1 & = 0 &m_2 & = 2 & m_3 & = 1 & m_4 & = 1 & m_5 & =0 & m_6 &=1.
	\end{align*}
We depict the sets $Y_{i,u,v}$ below, as well as the point $X_\kpm$. Observe that we can see a version of the lacing diagram from Example \ref{ex:Orbit.A3}.

\begin{center}
	\begin{minipage}[t]{0.4\textwidth}
	\begin{center}
	\underline{The $Y_{i,u,v}$ sets}
	\end{center}
	\vspace{0.2\baselineskip}
	\begin{align*}
	Y_{1,2,1} & =\{~\} & Y_{2,2,1} &=\{1\} & Y_{3,2,1} = \{1\} \\
	Y_{1,2,2} & =\{~\} & Y_{2,2,2} &=\{2\} & Y_{3,2,2} = \{2\} \\
	Y_{1,3,1} & =\{~\} & Y_{2,3,1} &=\{3\} & Y_{3,3,1} = \{~\} \\	
	Y_{1,4,1} & =\{1\} & Y_{2,4,1} &=\{4\} & Y_{3,4,1} = \{3\} \\
	Y_{1,6,1} & =\{2\} & Y_{2,6,1} &=\{~\} & Y_{3,6,1} = \{~\} \\
	\end{align*}
	\end{minipage}
	\begin{minipage}{0.1\textwidth}
	\[
	\implies
	\]
	\end{minipage}
	\begin{minipage}[t]{0.4\textwidth}
	\begin{center}
	\underline{The quiver representation $X_\kpm$}
	\end{center}
	\vspace{0.8\baselineskip}
	\[\begin{diagram}[width=2.5em,height=1.5em]
	~		& 		& b_{2,1}	& \lTo	& b_{3,1}	\\
	~		&		& b_{2,2}	& \lTo	& b_{3,2}	\\
	~		&		& b_{2,3}	& 		& ~			\\
	b_{1,1}	& \lTo	& b_{2,4}	& \lTo	& b_{3,3}	\\
	b_{1,2}	&		& ~			&		& ~
	\end{diagram}\]
	\end{minipage}
\end{center}

In the depiction of $X_\kpm$ on the right, observe that when reading from top to bottom we see
	\begin{itemize}
	\item $m_1 = 0$ isomorphic copies of the indecomposable representation $M_{\beta_1} = M_{e_3}$;
	\item $m_2 = 2$ isomorphic copies of the indecomposable representation $M_{\beta_2} = M_{e_2+e_3}$, determined by basis vectors $b_{2,v}$ and $b_{3,v}$ for $v=1,2$;
	\item $m_3 = 1$ isomorphic copies of the indecomposable representation $M_{\beta_3} = M_{e_2}$, determined by $b_{2,3}$;
	\item $m_4 = 1$ isomorphic copy of the indecomposable representation $M_{\beta_4} = M_{e_1+e_2+e_3}$, determined by $b_{1,1}$, $b_{2,4}$, and $b_{3,3}$; 
	\item $m_5 = 0$ isomorphic copies of the indecomposable representation $M_{\beta_5} = M_{e_1+e_2}$;
	\item $m_6 = 1$ isomorphic copy of the indecomposable representation $M_{\beta_6} = M_{e_1}$; determined by $b_{1,2}$.
	\end{itemize}
$X_\kpm$ has the following matrix representation (assuming the standard bases $\{b_{i,j}\}$ at each vertex)
	\begin{equation}
	X_\kpm = \left( 
		\left[\begin{array}{cccc}
		0 & 0 & 0 & 1 \\
		0 & 0 & 0 & 0
		\end{array}\right] ,
		\left[\begin{array}{ccc}
		1 & 0 & 0 \\
		0 & 1 & 0 \\
		0 & 0 & 0 \\
		0 & 0 & 1
		\end{array}\right]
		\right)
					\in \Rep_{(2,4,3)}(R). \nonumber
	\end{equation}
\end{example}

Let $G_\kpm$ denote the isotropy subgroup of $\Omega_\kpm \subset \Rep_\gamma(R)$. Up to isomorphism, $G_\kpm$ is the stabilizer in $\GGLL_\gamma$ of the point $X_\kpm$ and moreover, Feher--Rim\'anyi proved that up to homotopy we have $G_\kpm \hmtpc \prod_{u\in[r]} \GL(\CC^{m_u})$ \cite[Proposition~3.6]{lfrr2002.duke}. Further, Feher--Rim\'anyi study the restriction mapping $\iota^*_\kpm: \coho^\bullet_{\GGLL_\gamma}(\Rep_\gamma(R)) \to \coho^\bullet_{\GGLL_\gamma}(\Omega_\kpm)$ induced by the inclusion $\iota_\kpm:\Omega_\kpm \includes \Rep_\gamma(R)$. This mapping can be identified with a map $\coho^\bullet(B\GGLL_\gamma) \to \coho^\bullet(BG_\kpm)$. We already know that the source of this latter map is a polynomial ring via \eqref{eqn:H.gamma.polynomials}, but the homotopy type of $G_\kpm$ means that we can also write the target of this map as a polynomial ring. The mapping is then given by
\begin{equation}
	\label{eqn:restriction.dynkin}
\begin{aligned}
	\iota^*_\kpm : 
		\Tensor_{i\in Q_0} \QQ[\omega_{i,1},\ldots,\omega_{i,\gamma(i)}]^{\Sym_{\gamma(i)}}
		& \longrightarrow 
		\Tensor_{u\in [r]} \QQ[\tau_{u,1},\ldots,\tau_{u,m_u}]^{\Sym_{m_u}}  \\
		\omega_{i,k}\qquad\qquad  & \longmapsto \qquad\qquad\tau_{u,v} \quad \text{if $k\in Y_{i,u,v}$}.
\end{aligned}
\end{equation}
For more details on the mapping above, see \cite[Section~3]{lfrr2002.duke} and \cite[Section~11]{rr2013}. In particular, we note that our choice of the sets $Y_{i,u,v}$ amounts to an ordering on the variables above. However, since we consider only supersymmetric polynomials on the lefthand and righthand sides, any other allowed ordering produces the same mapping $\iota^*_\kpm$. We will see in the proof of Proposition \ref{prop:graded.mult.injective} why the specific choice of the sets $Y_{i,u,v}$ is good for our purposes.

\subsection{Injectivity}
	\label{ss:inj}

We now generalize the constructions of the previous subsection to our setting of the acyclic quiver $Q$, the (ordered and admissible) Dynkin subquiver partition $\cQ$, and a $\cQ$-partition $\kpm$. In particular, for each Dynkin subquiver $Q^j \in \cQ$, we have an associated Kostant partition $\kpm^j = (m^j_1,\ldots,m^j_{r_j}) \kp \gamma^j$, and this determines a system of sets $Y^j_{i,u',v'}$ with $i\in Q^j_0$, $u'\in[r_j]$, and $v'\in[m^j_{u'}]$. In turn, we have distinguished points $X_{m^j} \in \Omega_{\kpm^j}(Q^j) \subset \Rep_{\gamma^j}(Q_j)$. As in \cite{ja2018} we define a \emph{normal locus} associated to $\kpm$ to be the following subspace of $\eta_\kpm$
\begin{equation}
	\label{eqn:normal.locus.defn}
	\nu_\kpm = 
		\{ (\phi_a)_{a\in Q_1} \in \Rep_{\gamma} : 
		\forall j\in[\ell],~(\phi_a)_{a\in Q^j_1} = X_{\kpm^j} \}.
		\nonumber
\end{equation}
Since for each $j\in[\ell]$ we have $X_{\kpm^j} \in \Omega_{\kpm^j}(Q^j)$ we indeed have $\nu_\kpm \subset \eta_\kpm$. The normal locus will play the role of the distinguished points $X_\kpm$. Although $\nu_\kpm$ is not a singleton point, since each $X_{\kpm^j}$ \emph{is} a singleton point in $\Rep_{\gamma^j}(Q^j)$, we have a natural identification
\begin{equation}
	\label{eqn:normal.locus.homeomorphic}
	\nu_\kpm \homeo \Dirsum_{a\in Q_1 \setminus \cQ_1} \Hom(\CC^{\gamma(ta)},\CC^{\gamma(ha)})
\end{equation}
so $\nu_\kpm$ is homeomorphic to an (equivariantly) contractible vector space. We can also generalize the notion of the isotropy subgroup to this context and set $G_\kpm = \{ g\in \GGLL_\gamma : g\cdot \nu_\eta = \nu_\eta\}$. From this definition, it follows that 
	\begin{equation}
		\label{eqn:normal.locus.isotropy}
		G_\kpm 	\iso \prod_{j \in [\ell]} G_{\kpm^j} 
				\hmtpc \prod_{j\in [\ell]} \prod_{u' \in [r_j]} \GL(\CC^{m^j_{u'}}),
		\nonumber
	\end{equation}
which generalizes the Feher--Rim\'anyi result to the present context; see \cite[Proposition~6.1]{ja2018}. Hence, combining the analyses of \cite[Section~3]{lfrr2002.duke} and \cite[Section~6]{ja2018} we further obtain a generalized restriction mapping $\iota^*_\kpm : \coho^*(B\GGLL_\gamma) \to \coho^*(BG_\kpm)$ induced by the inclusion $\iota_\kpm:\eta_\kpm \includes \Rep_\gamma$ and given by
	\begin{equation}
		\label{eqn:general.restriction}
	\begin{aligned}
	\iota^*_\kpm : 
		\Tensor_{i\in Q_0} \QQ[\omega_{i,1},\ldots,\omega_{i,\gamma(i)}]^{\Sym_{\gamma(i)}}
		& \longrightarrow 
		\Tensor_{j\in[\ell]}
		\Tensor_{u'\in [r_j]} \QQ[\tau_{j,u',1},\ldots,\tau_{j,u',m^j_{u'}}]^{\Sym_{m^j_{u'}}}  \\
		\omega_{i,k}\qquad\qquad  & \longmapsto \qquad\qquad\tau_{j,u',v'} \quad \text{if $k\in Y^j_{i,u',v'}$};
	\end{aligned}
	\nonumber
	\end{equation}

Let $\varepsilon_\kpm \in \coho^\bullet(BG_\kpm)$ denote the Euler class of the normal bundle to $\eta_\kpm$ restricted to $\nu_\kpm$. Further, for each $j$ we have isotropy groups $G_{\kpm^j} \leq \GGLL_{\gamma^j}$ and we can denote the Euler classes of the normal bundles to $\Omega_{\kpm^j} \subset \Rep_{\gamma^j}(Q^j)$ at $X_{\kpm^j}$ by $\varepsilon_{\kpm^j} \in \coho^\bullet(BG_{\kpm^j})$. The classes $\varepsilon_{\kpm^j}$ have natural inclusions into $\coho^\bullet(BG_\kpm)$, via the K\"unneth isomorphism $\coho^\bullet(BG_\kpm) \iso \Tensor_{j\in[\ell]} \coho^\bullet(BG_{\kpm^j})$. We will need the following observation in the sequel.

\begin{prop}
	\label{prop:euler.normal.appears}
$\iota^*_\kpm( [\overline{\eta}_\kpm] ) = \prod_{j\in[\ell]} \varepsilon_{\kpm^j} = \varepsilon_\kpm$.
\end{prop}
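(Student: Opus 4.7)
The plan is to reduce the statement, via the product structure of $\overline{\eta}_\kpm$, to the corresponding Dynkin-quiver computation of Feh\'er--Rim\'anyi applied factor-by-factor. First I would recall from the proof of Proposition \ref{prop:consist.subset.desing} that
\[
\overline{\eta}_\kpm = \left( \prod_{j\in[\ell]} \overline{\Omega_{\kpm^j}(Q^j)}\right) \times \prod_{a\notin\cQ_1} \Hom\!\left(\CC^{\gamma(ta)},\CC^{\gamma(ha)}\right),
\]
which embeds into the analogous product decomposition of $\Rep_\gamma$ compatibly with the inclusion $\nu_\kpm \includes \eta_\kpm$ and the isotropy splitting $G_\kpm \iso \prod_j G_{\kpm^j}$. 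Under this decomposition, the ``free-arrow'' factors for $a\in Q_1\setminus\cQ_1$ appear identically in $\nu_\kpm$, $\eta_\kpm$, and $\Rep_\gamma$, so they contribute no normal directions.

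For the second equality $\prod_j \varepsilon_{\kpm^j}=\varepsilon_\kpm$, the above product structure identifies the normal bundle of $\eta_\kpm$ in $\Rep_\gamma$, restricted to $\nu_\kpm$, with the external Whitney sum (over $j\in[\ell]$) of the normal bundles to $\Omega_{\kpm^j}\subset \Rep_{\gamma^j}(Q^j)$ restricted to the points $X_{\kpm^j}$. Multiplicativity of the Euler class under Whitney sums, combined with the K\"unneth isomorphism $\coho^\bullet(BG_\kpm)\iso \Tensor_{j\in[\ell]} \coho^\bullet(BG_{\kpm^j})$ coming from $G_\kpm \hmtpc \prod_j G_{\kpm^j}$, then gives the claim.

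For the first equality $\iota^*_\kpm([\overline{\eta}_\kpm])=\prod_j \varepsilon_{\kpm^j}$, the same product structure shows that $\iota^*_\kpm$ factors (up to K\"unneth) as the tensor product of the Dynkin restriction maps $\iota^*_{\kpm^j}$ from \eqref{eqn:restriction.dynkin} along the inclusions $X_{\kpm^j}\includes \Rep_{\gamma^j}(Q^j)$, tensored with the identity on the cohomology of the free-arrow factors (on which $[\overline{\eta}_\kpm]$ restricts to $1$). For each Dynkin factor, since $X_{\kpm^j}\in \Omega_{\kpm^j}$ lies in the smooth locus of $\overline{\Omega_{\kpm^j}(Q^j)}$, the equivariant self-intersection formula analyzed by Feh\'er--Rim\'anyi \cite[Section~3]{lfrr2002.duke} (cf.\ the discussion in Section \ref{ss:eq.geom.dynkin}) gives $\iota^*_{\kpm^j}([\overline{\Omega_{\kpm^j}(Q^j)}])=\varepsilon_{\kpm^j}$. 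Taking the product across $j$ and invoking K\"unneth yields the first equality, and the chain is complete.

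The main technical obstacle I anticipate is purely bookkeeping: transporting the equivariant Euler class and restriction calculations cleanly across the three identifications in play (the homotopy equivalence $G_\kpm \hmtpc \prod_j G_{\kpm^j}$, the identification of $\coho^\bullet_{G_\kpm}(\nu_\kpm)$ with $\coho^\bullet(BG_\kpm)$ via equivariant contractibility of $\nu_\kpm$ from \eqref{eqn:normal.locus.homeomorphic}, and the K\"unneth splitting into Dynkin factors). Once these identifications are made explicit, the result follows formally from the Dynkin-quiver case, so the content of the proof is really the translation and not new geometry.
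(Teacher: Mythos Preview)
Your proposal is correct and follows essentially the same approach as the paper: both arguments use the product decomposition of $\overline{\eta}_\kpm$ to see that the free arrows $a\in Q_1\setminus\cQ_1$ contribute no normal directions, and both invoke the ``restriction of a fundamental class to smooth points equals the Euler class of the normal bundle'' principle. The only cosmetic difference is that the paper applies this principle once directly to $\overline{\eta}_\kpm$ (obtaining $\iota^*_\kpm([\overline{\eta}_\kpm])=\varepsilon_\kpm$ in one step) and then factors $\varepsilon_\kpm$, whereas you factor $\iota^*_\kpm$ through K\"unneth first and invoke the Dynkin case $\iota^*_{\kpm^j}([\overline{\Omega_{\kpm^j}}])=\varepsilon_{\kpm^j}$ on each factor; the content is the same.
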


\begin{proof}
The result generalizes an observation of \cite[Section~11]{rr2013} in the context of Dynkin quivers, where $\iota_{\kpm^j}^*([\overline{\Omega}_{\kpm^j}]) = \varepsilon_{\kpm^j}$. The same logic applies here, because the class of a variety (in this case $\overline{\eta}_\kpm$) restricted to the smooth points ($\nu_\kpm$ is a subset of the smooth points) is exactly the Euler class of the normal bundle. We further observe that $\varepsilon_\kpm$ can be realized as the product of the clases $\varepsilon_{\kpm^j}$ because there are no extra contributions from the arrows $a\in Q_1\setminus \cQ_1$ since the identification \eqref{eqn:normal.locus.homeomorphic} implies that $\nu_\kpm$ has full dimension there and, in fact, is the whole space $\Hom(\CC^{\gamma(ta)},\CC^{\gamma(ha)})$.
\end{proof}

For $\beta\in\Phi_+(\cQ)$ and $m \geq 0$, let $\cA_{\beta,m} = \cA_\beta \intersect \coha_{m\beta}$. Consider the multi-factor multiplication 
	\[
		\mu_\kpm:\cA_{\beta_1,m_1} \tensor \cdots \tensor \cA_{\beta_r,m_r} 
			\longrightarrow \coha_\gamma.
	\]

\begin{prop}
	\label{prop:mu.kpm.injective}
$\mu_\kpm$ is injective for all choices of $\cQ$-partition $\kpm = (m_1,\ldots,m_r)$.
\end{prop}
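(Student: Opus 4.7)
The plan is to prove injectivity by analyzing the composition $\iota^*_\kpm \circ \mu_\kpm$ and showing that, after a natural identification, it is multiplication by a nonzero element in an integral domain.

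First I would identify the source and target as the same polynomial ring. By Proposition \ref{prop:cA.structure}, there is a canonical isomorphism
\[
\Tensor_{u=1}^r \cA_{\beta_u, m_u} \;\iso\; \Tensor_{u=1}^r \QQ[\omega_{i(u),1}, \ldots, \omega_{i(u), m_u}]^{\Sym_{m_u}}.
\]
Since $d^{i(u)}_u = 1$ by our choice of $i(u)$, the sets $Y^{j(u)}_{i(u), k_u, v}$ are singletons $\{v\}$ (in the local enumeration of variables at vertex $i(u)$), so the restriction $\iota^*_\kpm$ specializes the variables at vertex $i(u)$ bijectively onto $\{\tau_{j(u), k_u, 1}, \ldots, \tau_{j(u), k_u, m_u}\}$. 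Consequently the source above is canonically identified with the tensor product $\Tensor_u \QQ[\tau_{j(u), k_u, 1}, \ldots, \tau_{j(u), k_u, m_u}]^{\Sym_{m_u}}$, which sits inside $\coho^\bullet(BG_\kpm)$ and, crucially, is an integral domain.

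Next, I would compute $\iota^*_\kpm(\mu_\kpm(f_1 \tensor \cdots \tensor f_r))$ explicitly. Using Proposition \ref{prop:multifactor.pushforward.mult}, we have $\mu_\kpm(f_1 \tensor \cdots \tensor f_r) = \pi_*\big(e(\G(\cQ)) \cdot \prod_u f_u(\F_{\bullet, u})\big)$. Restricting via $\iota^*_\kpm$ and applying Atiyah--Bott/Berline--Vergne localization at torus fixed points of the quiver flag variety, I would show that the result organizes into the product form
\[
\iota^*_\kpm\big(\mu_\kpm(f_1 \tensor \cdots \tensor f_r)\big)
= \varepsilon_\kpm \cdot \prod_{u=1}^r f_u\big(\tau_{j(u), k_u, 1}, \ldots, \tau_{j(u), k_u, m_u}\big).
\]
The special case $f_1 = \cdots = f_r = 1$ recovers $\iota^*_\kpm([\overline{\eta}_\kpm]) = \varepsilon_\kpm$, which is the joint content of Propositions \ref{prop:eta.m.prod.of.1s} and \ref{prop:euler.normal.appears}, and thus provides a useful consistency check.

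Under the identification established in the first paragraph, the right-hand side is exactly $\varepsilon_\kpm$ times the element $f_1 \tensor \cdots \tensor f_r$, viewed in the integral domain $\coho^\bullet(BG_\kpm)$. Since $\varepsilon_\kpm$ is the Euler class of a genuine normal bundle at a smooth point of $\eta_\kpm$, it is nonzero, so multiplication by $\varepsilon_\kpm$ is injective. Hence $\iota^*_\kpm \circ \mu_\kpm$ is injective, forcing $\mu_\kpm$ itself to be injective. The main obstacle will be justifying the displayed product formula: we must verify that the restricted pushforward produces no cross-terms between tensor factors. This requires a careful fixed-point analysis exploiting the singleton $Y$-sets (so each $\tau$-variable sees a unique $f_u$), the Reineke ordering of $\Phi_+(\cQ)$, and the admissibility of $\cQ$, to show that the cohomological contributions separate cleanly across the Dynkin subquiver blocks and collapse onto the expected product form.
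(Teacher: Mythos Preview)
Your proposal is correct and follows essentially the same route as the paper: compose $\mu_\kpm$ with $\iota^*_\kpm$, establish the product formula $f_1\cdots f_r\cdot\varepsilon_\kpm$ (the paper isolates this as Proposition~\ref{prop:graded.mult.injective}), and conclude by the integral domain argument. For the obstacle you flag, the paper's mechanism is sharper than ``no cross-terms'': there is exactly \emph{one} torus fixed point in $\Sigma_{m_1\beta_1,\ldots,m_r\beta_r}(\cQ)$ lying over the normal locus $\nu_\kpm$, so all but one term of the localization sum is annihilated by $\iota^*_\kpm$, and the surviving term is pinned down by setting all $f_u=1$ and invoking Propositions~\ref{prop:eta.m.prod.of.1s} and~\ref{prop:euler.normal.appears}.
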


To prove Proposition \ref{prop:mu.kpm.injective}, we consider the composition of $\mu_\kpm$ with the restriction mapping $\iota^*_{\kpm}$. Let $f_u(\omega_{i(u),1},\ldots,\omega_{i(u),m_u}) \in \cA_{\beta_u,m_u}$ for each $u\in[r]$. Moreover, we rename the variables $\tau_{j,u',v'}$ as follows. Recall the two ways in which we name the positive roots (see Section \ref{s:structure}), and hence entries of a $\cQ$-partition. Namely $\kpm$ is equal to the sequence of non-negative integers
\[
(m_1,\ldots,m_r) = (m^1_1,\ldots,m^1_{r_1},\,\ldots\,,m^\ell_1,\ldots,m^\ell_{r_\ell})
\]
where $r_j=\#\Phi_+(Q^j)$, $r = r_1 + \cdots + r_\ell = \#\Phi_+(\cQ)$, and both sequences above are \emph{in the same order}. For fixed $j$ and $u'$, we have the $m^j_{u'}$ variables $\{\tau_{j,u',1},\ldots,\tau_{j,u',m^j_{u'}}\}$. If the entry $m^j_{u'}$ (in the second sequence) corresponds to $m_u$ (in the first sequence), then we replace the variables $\{\tau_{j,u',1},\ldots,\tau_{j,u',m^j_{u'}}\}$ by variables $\{t_{u,1},\ldots,t_{u,m_u}\}$. In particular, since $m^j_{u'} = m_u$, these sets of variables indeed have the same number of elements. Further, this same correspondence extends to an identification of the $Y$-sets. In particular, we rename $Y^j_{i,u',v'}$ as $Y_{i,u,v'}$.

\begin{prop}
	\label{prop:graded.mult.injective}
The map $\iota_\kpm^{*} \compose \mu_\kpm : \cA_{\beta_1,m_1} \tensor \cdots \tensor \cA_{\beta_r,m_r} \to \coho^\bullet(BG_\kpm)$ is given by
\begin{multline}
	\label{eqn:graded.mult.image}
f_1(\omega_{i(1),1},\ldots,\omega_{i(1),m_1}) \tensor 
	\cdots \tensor f_r(\omega_{i(r),1},\ldots,\omega_{i(r),m_r}) \longmapsto \\
f_1(t_{1,1},\ldots,t_{1,m_1}) \cdots f_r(t_{r,1},\ldots,t_{r,m_r}) \cdot \varepsilon_\kpm.
\end{multline}
\end{prop}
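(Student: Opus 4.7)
The plan is to compute $\iota_\kpm^* \compose \mu_\kpm$ by expanding $\mu_\kpm$ via the multi-factor equivariant localization formula and then tracking each term under the substitution $\iota_\kpm^*$. First, by Proposition \ref{prop:multifactor.pushforward.mult}, we have
\[
\mu_\kpm(f_1 \tensor \cdots \tensor f_r) = \pi_*\Bigl(e(\G(\cQ)) \cdot \prod_{u=1}^{r} f_u(\F_{\bullet,u})\Bigr),
\]
which expands by equivariant localization on the quiver flag variety $\FFll_{m_1\beta_1,\ldots,m_r\beta_r}$ as a sum over ordered partitions $(S^{(1)}_i,\ldots,S^{(r)}_i)$ of $[\gamma(i)]$ with $|S^{(u)}_i| = m_u d^i_u$. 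Each summand is $\prod_u f_u(\omega_{i(u),S^{(u)}_{i(u)}})$ multiplied by a rational expression in the $\omega$'s, with arrow-factors in the numerator and vertex-factors in the denominator.

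Next I would apply $\iota_\kpm^*$, which substitutes $\omega_{i,k} \mapsto t_{u,v'}$ for $k \in Y_{i,u,v'}$. The $Y$-sets consist of consecutive integer blocks within $[\gamma(i)]$, engineered so that a single \emph{canonical} ordered partition is distinguished, namely $S^{(u)}_i = \bigsqcup_{v'\in[m_u]} Y_{i,u,v'}$. Because the assumption $d^{i(u)}_u = 1$ makes each $Y_{i(u),u,v'}$ a singleton, the specialization of $f_u$ on this canonical term restricts to exactly $f_u(t_{u,1},\ldots,t_{u,m_u})$. To then identify the restricted rational factor on the canonical term with $\varepsilon_\kpm$, I would pivot to the base case $f_1=\cdots=f_r=1$: Proposition \ref{prop:eta.m.prod.of.1s} yields $\mu_\kpm(1\tensor\cdots\tensor 1) = [\overline{\eta}_\kpm]$, and Proposition \ref{prop:euler.normal.appears} then yields $\iota_\kpm^*[\overline{\eta}_\kpm] = \varepsilon_\kpm$, pinning down the canonical-term ratio as $\varepsilon_\kpm$.

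The hard part will be ruling out contributions from the non-canonical ordered partitions; this cannot be inferred from the base case alone, since varying the $f_u$'s could in principle detect different combinations of non-canonical summands. The plan here is to show that each non-canonical partition has at least one arrow-numerator factor $\omega_{ha,k} - \omega_{ta,k'}$ whose $\iota_\kpm^*$-image vanishes, because some index has been assigned to an ``incorrect'' $S^{(u)}_i$, producing a pair $(k,k')$ whose $Y$-set images $(u_1,v_1)$ and $(u_2,v_2)$ coincide. The consecutive-block structure of the $Y$-sets, together with the fact that each $f_u$ depends only on vertex $i(u)$ variables, is essential to making this combinatorial accounting work. An alternate route is induction on $\ell$, peeling off one Dynkin subquiver $Q^j$ at a time and combining Rim\'anyi's Dynkin-case computation (proof of \cite[Lemma~11.4]{rr2013}) with a two-factor application of \eqref{eqn:coha.two.factor} across arrows in $Q_1\setminus\cQ_1$, which separates cleanly thanks to Lemma \ref{lem:need.arrow.subset}.
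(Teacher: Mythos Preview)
Your proposal is correct and follows essentially the same architecture as the paper: expand $\mu_\kpm$ via multifactor localization, identify the canonical term, and pin down its rational factor as $\varepsilon_\kpm$ by specializing to $f_1=\cdots=f_r=1$ and invoking Propositions~\ref{prop:eta.m.prod.of.1s} and~\ref{prop:euler.normal.appears}.

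The one place where the paper takes a shorter path than you is in ruling out the non-canonical terms. Rather than hunting combinatorially for a vanishing arrow-numerator factor in each non-canonical summand (which works, but requires care with possible $0/0$ cancellations between numerator and denominator after substitution), the paper argues geometrically: by Proposition~\ref{prop:consist.subset.desing}, $\pi_\kpm$ is a desingularization of $\overline{\eta}_\kpm$, so over the smooth locus $\nu_\kpm$ the fiber is a single point, and hence there is exactly one torus fixed point of $\Sigma_{m_1\beta_1,\ldots,m_r\beta_r}(\cQ)$ lying over $\nu_\kpm$. Since the localization terms are indexed by torus fixed points, restriction via $\iota^*_\kpm$ kills every term except the one corresponding to that unique fixed point---which is precisely your canonical partition $S^{(u)}_i = \bigsqcup_{v'} Y_{i,u,v'}$. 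This geometric step replaces your combinatorial accounting in one stroke, and it is the paper's main simplification over the direct approach you outline. Your alternate inductive route (reducing to Rim\'anyi's Dynkin computation for each $Q^j$ and then combining via Lemma~\ref{lem:need.arrow.subset}) is in fact how the paper justifies the ``unique fixed point'' claim: it holds over each $X_{\kpm^j}$ separately, and the arrows in $Q_1\setminus\cQ_1$ contribute nothing.
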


\begin{proof}
The product $f_1 * \cdots * f_r$ is given by the multi-factor version of \eqref{eqn:coha.two.factor}, which will have many terms. On the other hand, the formula \eqref{eqn:multi.mult.as.pi} says that this sum is really the torus equivariant localization formula for the $\pi_*$ mapping and, in particular, each term corresponds to a torus fixed point in $\FFll_{m_1\beta_1,\ldots,m_r\beta_r} \times \Rep_\gamma$. The content of Proposition \ref{prop:consist.subset.desing} is that $\pi$ gives a resolution of the singularities of $\overline{\eta}_\kpm$, in particular because it is a product of mappings which resolve the singularities of each Dynkin orbit closure $\overline{\Omega_{\kpm^j}(Q^j)}$. 

Now, we claim there is \emph{only one} torus fixed point in $\Sigma = \Sigma_{m_1\beta_1,\ldots,m_r\beta_r}(\cQ)$ over $\nu_\kpm$. Indeed, this is true over each $X_{\kpm^j}$ in the factors of $\Sigma$ corresponding to $Q^j$. Combining this observation with the fact that \eqref{eqn:multi.mult.as.pi} implies $\pi_*$ does not ``see'' any of the arrows in $Q_1\setminus \cQ_1$ proves the claim. It follows that when applying the restriction $\iota^*_\kpm$ to the localization formula, \emph{only one} of the terms survives! All others map to zero in $\coho^\bullet(BG_\kpm)$.

Further recall that each term in the localization sum corresponds to choices of subsets of $\{1,\ldots,\gamma(i)\}$ and one checks that the surviving term must correspond to the choice of subsets $\Union_{v\in[m_u]} Y_{i,u,v}$. From this, we can conclude that the $\iota^*_\kpm$ sends the surviving term to
	\[
	f_1(t_{1,1},\ldots,t_{1,m_1})\,\cdots\,f_r(t_{r,1},\ldots,t_{r,m_r})\cdot g(t_{u,v})
	\]
where $g$ is a rational function whose numerator and denominator are both products of linear factors of the form $t_{u,v} - t_{u',v'}$. Moreover, since $g$ comes from the part of the localization formula determined by the pushforward along the flag manifold only, it is independent of the $f_u$ polynomials! We can thus determine its value by choosing $f_u = 1$ for all $u\in[r]$. Doing so, we see that
	\[
	g = \iota^*_\kpm(1*\cdots*1) = \iota^*_\kpm([\overline{\eta}_\kpm]) = \varepsilon_\kpm
	\]
where the second equality is Proposition \ref{prop:eta.m.prod.of.1s} and the third equality is Proposition \ref{prop:euler.normal.appears}.
\end{proof}

\begin{example}
	\label{ex:injectivity.proof}
Here we illustrate the key points of the results above. Again we let $Q$ be the equioriented $A_3$ quiver $1 \leftarrow 2 \leftarrow 3$. However, this time we consider the admissible, ordered, Dynkin subquiver partition $\cQ = \{Q^1,Q^2\}$ where $Q^1$ is the type $A_1$ quiver at vertex $1$, and $Q^2$ is the type $A_2$ subquiver $2 \leftarrow 3$. A Reineke order (actually unique in this example) on the corresponding positive roots is
\begin{align*}
\beta_1 & = e_1 & \beta_2 &=e_3 & \beta_3 & = e_2 + e_3 & \beta_4 & = e_2
\end{align*}
and we will take the $\cQ$-partition $m_1 = 2$, $m_2 = m_3 = m_4 = 1$ of the dimension vector $\gamma = (2,2,2)$. The $Y_{i,u,v}$ sets are
\begin{align*}
	Y_{1,1,1} & = \{1\} & Y_{2,1,1} & = \{~\} & Y_{3,1,1} & = \{~\} \\
	Y_{1,1,2} & = \{2\} & Y_{2,1,2} & = \{~\} & Y_{3,1,2} & = \{~\} \\
	Y_{1,2,1} & = \{~\} & Y_{2,2,1} & = \{~\} & Y_{3,2,1} & = \{1\} \\
	Y_{1,3,1} & = \{~\} & Y_{2,3,1} & = \{1\} & Y_{3,3,1} & = \{2\} \\
	Y_{1,4,1} & = \{~\} & Y_{2,4,1} & = \{2\} & Y_{3,4,1} & = \{~\}. \\
\end{align*}
Thus, the normal locus $\nu_{2,1,1,1}$ looks like
\begin{center}
	\begin{tikzpicture}
	\node (b11) at (0,2.4) {$b_{1,1}$};
	\node (b12) at (0,1.8) {$b_{1,2}$};
	\node (b21) at (3,0.6) {$b_{2,1}$};
	\node[draw, shape=rectangle] (b22) at (3,0.0) {$b_{2,2}$};
	\node[draw, shape=rectangle] (b31) at (6,1.2) {$b_{3,1}$};
	\node (b32) at (6,0.6) {$b_{3,2}$};
	
	\draw[->] (b32)--(b21);
	
	\node[draw, shape=rectangle,dashed] at (1.5,1.2) {$\Hom(\CC^2,\CC^2)$};
	\end{tikzpicture}
\end{center}
that is, \[\nu_{2,1,1,1} = \left\{ \left( \left[\begin{array}{cc} z_{11} & z_{12} \\ z_{21} & z_{22} \end{array}\right] , \left[\begin{array}{cc} 0 & 1 \\ 0 & 0 \end{array}\right] \right) \in \Rep_{2,2,2}(Q): z_{i,j} \in \CC \right\}.\] The vectors boxed with solid lines in the diagram above are those which contribute to the normal directions to $\nu_{2,1,1,1}$ in $\Rep_{2,2,2}$; they span the cokernel and kernel of the map from vertex $3$ to vertex $2$. Now we consider the restriction mapping 
\begin{multline*}
	\iota^{*}_{2,1,1,1}:
	\QQ[\omega_{1,1},\omega_{1,2}]^{\Sym_2} \tensor \QQ[\omega_{2,1},\omega_{2,2}]^{\Sym_2} 
		\tensor \QQ[\omega_{3,1},\omega_{3,2}]^{\Sym_2} \\ 
	\longrightarrow \QQ[t_{1,1},t_{1,2}]^{\Sym_2} \tensor \QQ[t_{2,1}] 
							\tensor \QQ[t_{3,1}] \tensor \QQ[t_{4,1}]
\end{multline*}
which, from the $Y_{i,u,v}$ sets, we see is given by
\begin{center}
	\begin{tikzpicture}
		\node (w11) at (0,2.5) {$\omega_{1,1}$};
		\node (w12) at (0,2) {$\omega_{1,2}$};
		\node (w21) at (0,1.5) {$\omega_{2,1}$};
		\node (w22) at (0,1) {$\omega_{2,2}$};
		\node (w31) at (0,.5) {$\omega_{3,1}$};
		\node (w32) at (0,0) {$\omega_{3,2}$};
		
		\node (t11) at (5,2.25) {$t_{1,1}$};
		\node (t12) at (5,1.75) {$t_{1,2}$};
		\node (t21) at (5,1.25) {$t_{2,1}$};
		\node (t31) at (5,0.75) {$t_{3,1}$};
		\node (t41) at (5,0.25) {$t_{4,1}$};
		
		\node (iota) at (2.5,3) {$\iota^*_{2,1,1,1}$};
		
		\draw [|->] (w11) -- (t11);
		\draw [|->] (w12) -- (t12);
		\draw [|->] (w21) -- (t31);
		\draw [|->] (w22) -- (t41);
		\draw [|->] (w31) -- (t21);
		\draw [|->] (w32) -- (t31);
	\end{tikzpicture}
\end{center}

Next, we choose $i(1) = 1$, $i(2) = 3$, $i(3) = 2$, and $i(4) = 2$ and let $f_u \in \cA_{\beta_u,m_u}$. In particular, $f_1 = f_1(\omega_{1,1},\omega_{1,2}) \in \coha_{2e_1}$, $f_2 = f_2(\omega_{3,1})\in\coha_{e_3}$, $f_3 = f_3(\omega_{2,1})\in \coha_{e_2+e_3}$, and $f_4 = f_4(\omega_{2,1}) \in \coha_{e_2}$. The multifactor multiplication gives
	\begin{multline*}
	(f_1*f_2*f_3*f_4)(\omega_{1,1},\omega_{1,2},\omega_{2,1},\omega_{2,2},\omega_{3,1},\omega_{3,2}) = \\
		f_1(\omega_{1,1},\omega_{1,2})f_2(\omega_{3,1})f_3(\omega_{2,1})f_4(\omega_{2,2}) 
		\frac{(\omega_{2,2}-\omega_{3,1})(\omega_{2,2}-\omega_{3,2})(\omega_{2,1}-\omega_{3,1})}
			{(\omega_{3,2}-\omega_{3,1})(\omega_{2,2}-\omega_{2,1})} + \clubsuit
	\end{multline*}
where $\clubsuit$ consists of three other similar terms. The term written in full above is the one corresponding to the $Y_{i,u,v}$ sets; each of the other three terms has a factor of $(\omega_{3,2} - \omega_{2,1})$ occurring in the numerator. Hence applying $\iota^*_{2,1,1,1}$ above yields the single term
	\begin{multline}
		\label{eqn:after.restriction}
	f_1(t_{1,1},t_{1,2})f_2(t_{2,1})f_3(t_{3,1})f_4(t_{4,1}) 
		\frac{(t_{4,1}-t_{2,1})\cancel{(t_{4,1}-t_{3,1})(t_{3,1}-t_{2,1})}}
			{\cancel{(t_{3,1}-t_{2,1})(t_{4,1}-t_{3,1})}}  \\
	= f_1(t_{1,1},t_{1,2})f_2(t_{2,1})f_3(t_{3,1})f_4(t_{4,1}) \cdot (\underbrace{t_{4,1}-t_{2,1}}_{\varepsilon_{\kpm^2}}).
	\end{multline}
We see that $\varepsilon_{\kpm^2} = t_{4,1} - t_{2,1}$ by comparing to the boxed normal directions $b_{2,2}$ and $b_{3,1}$ in the previous diagram. Moreover, since $\varepsilon_{\kpm^1} = 1$, we have $\varepsilon_{2,1,1,1} = t_{4,1} - t_{2,1}$, which verifies the previously claimed results in this instance. 

Had we instead chosen $i(3)=3$, then we would have $f_3 \in \coha_{e_2+e_3}$ a polynomial in the variable $\omega_{3,1}$ and the term from $f_1 * f_2 * f_3 *f_4$ which will survive is
	\[
	f_1\left(\omega _{1,1},\omega _{1,2}\right) f_2\left(\omega _{3,1}\right) f_3\left(\omega _{3,2}\right) f_4\left(\omega _{2,2}\right)
	\frac{ \left(\omega _{2,2}-\omega _{3,1}\right) \left(\omega _{2,2}-\omega _{3,2}\right) \left(\omega _{2,1}-\omega _{3,1}\right) }{\left(\omega _{3,2}-\omega _{3,1}\right) \left(\omega _{2,2}-\omega _{2,1}\right) }.
	\]
Either way, observe that the result of applying $\iota_\kpm^*$ to the above agrees with \eqref{eqn:after.restriction}.
\end{example}

\begin{proof}[Proof of Proposition \ref{prop:mu.kpm.injective}]
We have that the target of the mapping $\iota^*_\kpm \compose \mu_\kpm$ is an integral domain; Proposition \ref{prop:graded.mult.injective} gives a formula which is a product of nonzero elements. Therefore $\iota^*_\kpm\compose\mu_\kpm$ is injective, and it follows that $\mu_\kpm$ is injective.
\end{proof}

\begin{lem}
	\label{lem:inj}
The mapping \eqref{eqn:coha.structure} is injective.	
\end{lem}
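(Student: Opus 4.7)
The plan is to reduce injectivity to the injectivity of each individual $\mu_\kpm$ already established by Proposition \ref{prop:mu.kpm.injective}, via a triangularity with respect to the restriction maps $\iota^*_\kpm$ from Section \ref{ss:inj}. Since \eqref{eqn:coha.structure} respects the $D$-grading, I would fix $\gamma \in D$ and work with the $\gamma$-graded piece of the domain, namely $\bigoplus_{\kpm \kp \gamma} \cA_{\beta_1,m_1} \tensor \cdots \tensor \cA_{\beta_r,m_r}$, where the sum ranges over $\cQ$-partitions of $\gamma$. The goal becomes showing that the map $\bigoplus_{\kpm \kp \gamma} \mu_\kpm$ into $\coha_\gamma$ is injective.

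The central step will be a triangularity lemma: with the partial order $\preceq$ on $\cQ$-partitions of $\gamma$ defined by $\kpm \preceq \kpm'$ iff $\overline{\eta}_\kpm \subseteq \overline{\eta}_{\kpm'}$, one has $\iota^*_\kpm \compose \mu_{\kpm'} = 0$ whenever $\kpm \not\preceq \kpm'$. To prove this, I would combine Propositions \ref{prop:multifactor.pushforward.mult}, \ref{prop:consistency.subset.fund.class}, and \ref{prop:consist.subset.desing} to realize the image of $\mu_{\kpm'}$ as proper pushforwards through the map $\pi|_{\Sigma_{\kpm'}(\cQ)} : \Sigma_{\kpm'}(\cQ) \to \Rep_\gamma$ whose image is $\overline{\eta}_{\kpm'}$; then standard excision arguments show these classes restrict to zero on the open complement $U = \Rep_\gamma \setminus \overline{\eta}_{\kpm'}$. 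Since $\overline{\eta}_{\kpm'}$ is a union of full strata (as one checks from the product form $\eta_\kpm = \prod_j \Omega_{\kpm^j}(Q^j) \times \prod_{a \notin \cQ_1} \Hom(\CC^{\gamma(ta)}, \CC^{\gamma(ha)})$), the condition $\kpm \not\preceq \kpm'$ forces $\nu_\kpm \subset U$, so $\iota^*_\kpm$ factors through the restriction to $U$ and annihilates $\mu_{\kpm'}$'s image.

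Granted the triangularity, the rest is a maximality argument. Supposing $\sum_\kpm \mu_\kpm(x_\kpm) = 0$ with nonempty support $\mathcal{S} = \{\kpm : x_\kpm \neq 0\}$, pick $\kpm_0 \in \mathcal{S}$ maximal under $\preceq$ and apply $\iota^*_{\kpm_0}$; triangularity kills every term except the one at $\kpm = \kpm_0$ (any other $\kpm \in \mathcal{S}$ with $\kpm_0 \preceq \kpm$ would contradict maximality). This leaves $\iota^*_{\kpm_0} \compose \mu_{\kpm_0}(x_{\kpm_0}) = 0$, contradicting the injectivity of $\iota^*_{\kpm_0} \compose \mu_{\kpm_0}$ furnished by Proposition \ref{prop:graded.mult.injective} (whose explicit formula exhibits the map as multiplication by the nonzero class $\varepsilon_{\kpm_0}$ inside an integral domain of polynomials). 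The main obstacle throughout is rigorously extracting the ``supported on $\overline{\eta}_{\kpm'}$'' property in equivariant cohomology, since classes in $\coha_\gamma \iso \coho^*(B\GGLL_\gamma)$ are just polynomials with no a priori geometric support; the resolution is to track the proper pushforward structure from Proposition \ref{prop:multifactor.pushforward.mult} and apply equivariant excision for the closed invariant subset $\overline{\eta}_{\kpm'} \subset \Rep_\gamma$.
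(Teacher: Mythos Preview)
Your argument is correct. The paper's own proof of this lemma is a single sentence: it asserts that since Proposition~\ref{prop:mu.kpm.injective} holds for every $\cQ$-partition $\kpm$, the injectivity of \eqref{eqn:coha.structure} is ``immediate.'' You have correctly recognized that this inference is not literally immediate: the $\gamma$-graded piece of the domain is $\bigoplus_{\kpm \kp \gamma} \cA_{\beta_1,m_1} \tensor \cdots \tensor \cA_{\beta_r,m_r}$, and since distinct $\cQ$-partitions can share the same $\gamma$, injectivity of each summand map $\mu_\kpm$ does not by itself preclude cancellation among the summands. Your triangularity argument via the partial order $\kpm \preceq \kpm'$ (closure inclusion of strata) and the restriction maps $\iota^*_\kpm$ is the standard way to fill this in, and your justification---that the image of $\mu_{\kpm'}$ is a proper pushforward supported on $\overline{\eta}_{\kpm'}$ (via Propositions~\ref{prop:multifactor.pushforward.mult}, \ref{prop:consistency.subset.fund.class}, \ref{prop:consist.subset.desing}) and hence dies under restriction to the open complement containing $\nu_\kpm$---is sound. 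The observation that $\overline{\eta}_{\kpm'}$ is a union of full strata (because each Dynkin orbit closure is a union of orbits) is exactly what makes the disjointness clean.

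In short, the two arguments are the same in spirit (both hinge on Proposition~\ref{prop:mu.kpm.injective} and the formula of Proposition~\ref{prop:graded.mult.injective}), but yours supplies the filtration step that the paper suppresses. What your version buys is rigor at the cost of length; what the paper's version buys is brevity by relying on the reader to recognize the standard ``support filtration plus maximal-stratum restriction'' pattern from the equivariant-cohomology literature (and from the Rim\'anyi paper it is generalizing).
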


\begin{proof}
Since Proposition \ref{prop:mu.kpm.injective} holds for all choices of the $\cQ$-partition $\kpm$, it immediately implies the injectivity of \eqref{eqn:coha.structure}.
\end{proof}

\begin{remark}
The proof above generalizes the methods for an analogous result from \cite{rr2013}, and illustrates they are applicable in the acyclic setting.
\end{remark}

\begin{remark}
Observe it suffices for our injectivity argument that $\iota^*_\kpm\compose\mu_\kpm:f_1 \tensor \cdots \tensor f_r \mapsto f_1\cdots f_r\cdot g$ where $g$ is a nonzero polynomial. The fact that $g = \varepsilon_\kpm = \prod_{j} \varepsilon_{\kpm^j}$ is a ``bonus'' geometric fact.
\end{remark}

\subsection{Comparisons between Poincar\'e series}
	\label{ss:surj}
	
We adopt the notation $(\coha_\gamma)_k$ to denote the cohomological degree $2k$ part of the cohomology algebra $\coha_\gamma = \coho^\bullet_{\GGLL_\gamma}(\Rep_\gamma)$. As we have already noted, when $\cQ$ consists of the $n$ singleton vertices of $Q$ in ``head-before-tail'' order, then Lemma \ref{lem:simples.iso} says that the mapping
	\begin{equation}
	\cA_{e_1} \tensor \cdots \tensor \cA_{e_n} \to \coha
	\nonumber
	\end{equation}
induced by the $*$-multiplication is an isomorphism by \eqref{eqn:st.basis.iso.proof}. Thus we can see the shifted, twisted, Poincar\'e series for $\coha$ as an element of $\widehat{\A}_Q$ by considering
\begin{equation}
	\label{eqn:simp.rts.P.series}
	\EE(y_{e_1})\cdots\EE(y_{e_n}) = 
		\sum_{\gamma\in D} y_{e_1}^{\gamma(1)} \cdots y_{e_n}^{\gamma(n)} (-1)^{\sum_{i\in Q_0} \gamma(i)} q^{\sum_{i\in Q_0} \gamma(i)^2/2} \, \sum_{k\geq 0} q^k \dim\left((\coha_\gamma)_k\right).
\end{equation}
If we let $(\cA_{\beta_u,m_u})_{k_u}$ denote the degree $2k_u$ part of $\cA_{\beta_u,m_u}$ we also get \[(\cA_{\beta_u,m_u})_{k_u} \iso \coho^{2k_u}(B\GL(\CC^{m_u}))\] by Proposition \ref{prop:cA.structure} and Equation \eqref{eqn:coha.r.A1}. Thus given any admissible, ordered, Dynkin subquiver partition $\cQ$, we have that the shifted, twisted, Poincar\'e series of $\cA_{\beta_1} \tensor\cdots\tensor \cA_{\beta_r}$ is encoded by
\begin{multline}
	\label{eqn:pos.rts.P.series}
	\EE(y_{\beta_1})\cdots\EE(y_{\beta_r}) = \sum_{\kpm \in \ZZ^r_{\geq0} }
	 y_{\beta_1}^{m_1} \cdots y_{\beta_r}^{m_r} 
		(-1)^{\sum_{u\in[r]} m_u} q^{\sum_{u\in[r]} m_u^2/2} \times \left[\phantom{\int_0^1} \right. \\
		\left. \sum_{k\geq 0} q^k  \sum_{k_1 + \cdots + k_r = k} 
		 	\dim\left((\cA_{\beta_1,m_1})_{k_1}\right) 
			\cdots \dim\left((\cA_{\beta_r,m_r})_{k_r}\right) \right].
\end{multline}
To further manipulate the expression above, we have the following lemma.

\begin{lem}
	\label{lem:graded.mult.dim.shift}
If $\kpm$ is a $\cQ$-partition of $\gamma$, then for each $k$ we have
		\begin{multline*}
		\sum_{k= k_1 + \cdots + k_r} \dim\left((\cA_{\beta_1,m_1})_{k_1}\right) \cdots \dim\left( ( \cA_{\beta_r,m_r})_{k_r}\right) 
		= \dim(\cA_{\beta_1,m_1} * \cdots * \cA_{\beta_r,m_r})_{k + \codim(\eta_\kpm;\Rep_\gamma)}.
		\end{multline*}
\end{lem}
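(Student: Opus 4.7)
The plan is to exploit injectivity of $\mu_\kpm$ (Proposition~\ref{prop:mu.kpm.injective}) together with the fact that $\mu_\kpm$ shifts cohomological degree by a uniform amount $\sigma_\kpm$ depending only on $\kpm$. Under these conditions, the LHS of the lemma---the dimension of the degree-$2k$ piece of the tensor product---equals the dimension of the degree-$2(k+\sigma_\kpm)$ piece of the image, so the lemma reduces to the identification $\sigma_\kpm = \codim(\eta_\kpm;\Rep_\gamma)$.

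The value of $\sigma_\kpm$ comes directly from the degree-shift formula of Section~\ref{ss:geom.coha.mult}: since $\coho^{b_1}_{\GGLL_{\gamma_1}}(\Rep_{\gamma_1}) * \coho^{b_2}_{\GGLL_{\gamma_2}}(\Rep_{\gamma_2}) \subseteq \coho^{b_1+b_2-2\chi(\gamma_1,\gamma_2)}_{\GGLL_{\gamma_1+\gamma_2}}(\Rep_{\gamma_1+\gamma_2})$, iterating by associativity over the $r$ factors with $\gamma_u = m_u\beta_u$ yields $\sigma_\kpm = -\sum_{1\leq u<v\leq r} m_u m_v\, \chi(\beta_u,\beta_v)$. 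Hence the task becomes the identity $\codim(\eta_\kpm;\Rep_\gamma) = -\sum_{u<v}m_u m_v\chi(\beta_u,\beta_v)$.

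To establish this identity I will compare two expressions for the exponent $w_\kpm$. Proposition~\ref{prop:compute.codim} gives $w_\kpm$ directly in terms of $\codim$, $\sum_i\gamma(i)^2$, and $\sum_u m_u^2$. On the other hand, iterating the quantum algebra relation $y_\alpha y_\beta = -q^{\langle\alpha,\beta\rangle/2}y_{\alpha+\beta}$ (using $\langle\beta,\beta\rangle=0$) reduces both $y_{\beta_1}^{m_1}\cdots y_{\beta_r}^{m_r}$ and $y_{e_1}^{\gamma(1)}\cdots y_{e_n}^{\gamma(n)}$ to scalar multiples of $y_\gamma$, from which comparing $q$-powers produces
\[
w_\kpm \;=\; \tfrac{1}{2}\sum_{u<v} m_u m_v\langle\beta_u,\beta_v\rangle \;-\; \tfrac{1}{2}\sum_{i<j}\gamma(i)\gamma(j)\langle e_i,e_j\rangle.
\]
Equating the two expressions for $w_\kpm$ and substituting $\langle\beta_u,\beta_v\rangle = \chi(\beta_v,\beta_u) - \chi(\beta_u,\beta_v)$ together with $\chi(\beta_u,\beta_u)=1$ (true for any positive root of a Dynkin subquiver) reduces the goal to the elementary identity $\chi(\gamma,\gamma) = \sum_i\gamma(i)^2 + \sum_{i<j}\gamma(i)\gamma(j)\langle e_i,e_j\rangle$. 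This follows from the ``head before tail'' ordering of $Q_0$, which by \eqref{eqn:lambda.simples} forces $\langle e_i,e_j\rangle = -\#\{a\in Q_1 : ta=j,\,ha=i\}$ for $i<j$, so that the right-hand side collapses to $\sum_i\gamma(i)^2 - \sum_a\gamma(ta)\gamma(ha) = \chi(\gamma,\gamma)$. The main obstacle is the bookkeeping in this last comparison---juggling the three bilinear pairings $\chi$, $\langle\cdot,\cdot\rangle$, and the Euler form of $\gamma$, while carefully separating symmetric and antisymmetric parts of $\chi$ over the pairs $(u,v)$; the remaining steps follow immediately from results already established.
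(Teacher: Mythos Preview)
Your argument is correct, but it takes a substantially more circuitous route than the paper's. The paper deduces the lemma as an immediate corollary of Proposition~\ref{prop:graded.mult.injective}: that proposition shows $\iota^*_\kpm \compose \mu_\kpm$ sends $f_1\tensor\cdots\tensor f_r$ to $f_1\cdots f_r\cdot\varepsilon_\kpm$, where $\varepsilon_\kpm$ is the Euler class of the normal bundle to $\eta_\kpm$. Since $\iota^*_\kpm$ preserves degree and $\varepsilon_\kpm$ has cohomological degree $2\codim(\eta_\kpm;\Rep_\gamma)$ by definition of the Euler class, the degree shift $\sigma_\kpm$ is read off instantly as $\codim(\eta_\kpm;\Rep_\gamma)$---no further computation needed.

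Your approach instead computes $\sigma_\kpm$ from the Euler-form degree-shift formula of Section~\ref{ss:geom.coha.mult} and then establishes the identity $\codim(\eta_\kpm;\Rep_\gamma) = -\sum_{u<v}m_um_v\chi(\beta_u,\beta_v)$ by an algebraic comparison: equating the expression for $w_\kpm$ in Proposition~\ref{prop:compute.codim} with the one obtained by iterating the quantum-algebra relation~\eqref{eqn:qalg.relation}, and reducing to $\chi(\gamma,\gamma)=\sum_i\gamma(i)^2+\sum_{i<j}\gamma(i)\gamma(j)\langle e_i,e_j\rangle$ via the head-before-tail ordering. This is valid (and the intermediate identity for $\codim$ is itself an interesting formula), but it essentially re-derives part of the proof of Proposition~\ref{prop:compute.codim} from~\cite{ja2018} and requires the extra input $\chi(\beta_u,\beta_u)=1$. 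The paper's geometric argument is shorter and more conceptual: the codimension appears naturally as the degree of a normal Euler class rather than as the outcome of bilinear-form bookkeeping.
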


\begin{proof}
One can view this as a corollary to Proposition \ref{prop:graded.mult.injective}, since the degree of $\varepsilon_\kpm$ in Equation \eqref{eqn:graded.mult.image} corresponds exactly to $\codim(\eta_\kpm;\Rep_\gamma)$.
\end{proof}

Using the results of Proposition \ref{prop:compute.codim} and Lemma \ref{lem:graded.mult.dim.shift}, we see that \eqref{eqn:pos.rts.P.series} is further equal to
\begin{multline}
	\label{eqn:pos.rts.P.series.final}
	\sum_{\gamma \in D}  y_{e_1}^{\gamma(1)} \cdots y_{e_n}^{\gamma(n)} (-1)^{\sum_{i\in Q_0} \gamma(i)} q^{\sum_{i\in Q_0} \gamma(i)^2/2} 
	\left[
	\sum_{k\geq 0} q^k\, \sum_{\kpm\kp\gamma}\dim\left( (\cA_{\beta_1,m_1} * \cdots * \cA_{\beta_r,m_r})_k \right) \right].
\end{multline}

\begin{proof}[Completing the proof of Theorem \ref{thm:main}]
Proposition \ref{prop:EQ.factors} states that \eqref{eqn:simp.rts.P.series} and \eqref{eqn:pos.rts.P.series}, and hence \eqref{eqn:pos.rts.P.series.final}, are the same. Thus their comparison shows that both $\cA_{\beta_1}\tensor\cdots\tensor \cA_{\beta_r}$ and $\coha$ have the same Poincar\'e series. Since we already know that the mapping \eqref{eqn:coha.structure} is injective by Lemma \ref{lem:inj}, this is enough to guarantee that it must be an isomorphism.
\end{proof}


\bibliographystyle{amsalpha}
\bibliography{jmabib}
\end{document}